\newcommand{\N}{\mathbb{N}}
\newcommand{\R}{\mathbb{R}}
\newcommand{\B}{\mathcal{B}}
\newcommand{\D}{\mathcal{D}}
\newcommand{\E}{\mathcal{E}}
\newcommand{\G}{\mathcal{G}}
\newcommand{\HH}{\mathcal{H}}
\newcommand{\U}{\mathcal{U}}
\newcommand{\LL}{\mathcal{L}}
\newcommand{\M}{\mathcal{M}}
\newcommand{\NN}{\mathcal{N}}
\newcommand{\cO}{\mathcal{O}}
\newcommand{\cS}{\mathcal{S}}
\newcommand{\T}{\mathcal{T}}
\newcommand{\al}{\alpha}
\newcommand{\be}{\beta}
\newcommand{\de}{\delta}
\newcommand{\ga}{\gamma}
\newcommand{\e}{\varepsilon}
\newcommand{\fy}{\varphi}
\newcommand{\om}{\omega}
\newcommand{\la}{\lambda}
\newcommand{\te}{\theta}
\newcommand{\s}{\sigma}
\newcommand{\ta}{\tau}
\newcommand{\ka}{\kappa}
\newcommand{\x}{\xi}
\newcommand{\y}{\eta}
\newcommand{\z}{\zeta}
\newcommand{\rh}{\rho}
\newcommand{\vr}{\varrho}
\newcommand{\De}{\Delta}
\newcommand{\La}{\Lambda}
\newcommand{\p}{\partial}
\newcommand{\na}{\nabla}
\newcommand{\lec}{\lesssim}
\newcommand{\gec}{\gtrsim}
\newcommand{\etc}{,\ldots,}
\newcommand{\I}{\infty}
\newcommand{\ti}{\widetilde}
\newcommand{\LR}[1]{{\langle #1 \rangle}}
\newcommand{\EQ}[1]{\begin{equation}\begin{split} #1 \end{split}\end{equation}}
\newcommand{\Del}[1]{}
\newcommand{\CAS}[1]{\begin{cases} #1 \end{cases}}
\newcommand{\mat}[1]{\begin{pmatrix} #1 \end{pmatrix}}
\newcommand{\pt}{&}
\newcommand{\pr}{\\ &}
\newcommand{\pq}{\quad}
\newcommand{\pn}{}
\newcommand{\prq}{\\ &\quad}
\newcommand{\prQQ}{\\ &\qquad\qquad}
\numberwithin{equation}{section}
\newtheorem{thm}{Theorem}[section]
\newtheorem{cor}[thm]{Corollary}
\newtheorem{lem}[thm]{Lemma}
\newtheorem{prop}[thm]{Proposition}
\theoremstyle{remark}
\newtheorem{defn}[thm]{Definition}
\newcommand{\mb}{\mathfrak{m}_\phi}
\newcommand{\tm}{\tilde{\mathfrak{m}}_\de}
\newcommand{\diff}[1]{{\triangleleft #1}}
\newcommand{\pa}{\triangleright}
\newcommand{\ok}{\overline{k}}
\newcommand{\uk}{\underline{k}}
\newcommand{\Dm}{\mathscr{D}}
\newcommand{\Sm}{\mathscr{S}}
\newcommand{\Rm}{\mathcal{R}}
\newcommand{\gr}[1]{\lceil #1 \rfloor}
\newcommand{\Str}{{\operatorname{Str}}}
\newcommand{\bde}[1]{(#1 \wedge\de)}
\newcommand{\f}{\frac}
\begin{document}
\subjclass[2010]{35L70, 35Q55} 
\keywords{nonlinear wave equation, nonlinear Klein-Gordon equation, stationary solution, soliton, stable manifold, center-stable manifold}

\author{K.~Nakanishi}
\address{Department of Mathematics, Kyoto University\\ Kyoto 606-8502, Japan}
\email{n-kenji@math.kyoto-u.ac.jp}

\author{W.~Schlag}
\address{Department of  Mathematics, The University of Chicago\\ Chicago, IL 60615, U.S.A.} 
\email{schlag@math.uchicago.edu} 

\thanks{The second author was supported in part by the National Science Foundation,  DMS-0617854  as well as by a Guggenheim fellowship.}

\title[Invariant manifolds for solitons of NLKG]{Invariant manifolds around soliton manifolds \\ for the nonlinear Klein-Gordon equation}

\begin{abstract}
We construct center-stable and center-unstable manifolds, as well as stable and unstable manifolds, for the nonlinear Klein-Gordon equation with a focusing energy sub-critical nonlinearity, 
associated with a family of solitary waves which is generated from any radial stationary solution by 
the action of all Lorentz transforms and spatial translations. The construction is based on the graph transform (or Hadamard) approach, which
requires less spectral information on the linearized operator, and less decay of the nonlinearity, than the Lyapunov-Perron method employed previously in this context.  
The only assumption on the stationary solution is that the kernel of the linearized operator is spanned by its spatial derivatives, which is known to hold for the ground states.  
The main novelty of this paper lies with the fact that the graph transform method is carried out in the presence of modulation parameters corresponding to the symmetries. 
\end{abstract}

\maketitle
\tableofcontents

\section{Introduction}
Consider the focusing nonlinear Klein-Gordon equation (NLKG) on $\R^d$ 
\EQ{ \label{NLKG}
 \ddot u - \De u + u = f(u), \pq u(t,x):\R^{1+d}\to\R}
where $f:\R\to\R$ is a given nonlinearity. 
A typical example is the focusing power nonlinearity 
\EQ{ \label{power f}
 f(u)=|u|^{p-1}u, \pq 3\le p+1<\CAS{\frac{2d}{d-2} &(d\ge 3),\\ \I &(d\le 2).}}
The lower bound can in principle be reduced to $p>1$, but we assume $p\ge 2$ to avoid technical and non-essential complications in the nonlinear estimates. 

The equation preserves the total energy and momentum
\EQ{ \label{def EP}
 E(u) := \int_{\R^d} \big[\frac{|\dot u|^2+|\na u|^2+|u|^2}{2}-f^{(-1)}(u)\big]\,dx, 
 \pq P(u) := \int_{\R^d}\dot u\na u\, dx,}
where $f^{(-1)}:\R\to\R$ is the primitive $f^{(-1)}(a)=\int_0^af(b)\, db$. These quantities are well-defined in the energy space 
\EQ{ \label{def HH}
 \vec u(t):=(u(t),\dot u(t)) \in \HH:=H^1(\R^d)\times L^2(\R^d).}
Throughout the paper, we do not distinguish vertical and horizontal vectors in $\HH$, unless it may lead to any confusion. 

We will consider  NLKG   in the energy space $\HH$, regarding it as a Hamiltonian system. Our goal is to construct a local center-stable manifold of the family of traveling waves generated by the Lorentz transforms and spatial translations acting on a stationary solution. For brevity, we call the latter manifold of traveling waves the {\em soliton manifold}. 
There are two major approaches used in the  construction of center-stable manifolds: the {\em Hadamard method} and the {\em Lyapunov-Perron method}. The former uses the evolution backward and locally in time to find a  flow-invariant graph of the unstable modes in terms of the other components (the Hadamard approach   
also goes by the name of {\em graph transform} or {\em invariant cones} method). The latter uses the evolution forward and globally in time to find an initial adjustment of the unstable modes so that they remain small forever. 

Bates and Jones~\cite{BJ} developed the Hadamard method  in the general setting of an ODE of the form $\dot x= Ax + f(x)$ where $A$ is an (unbounded)
operator on some Banach space $X$ which generates a continuous semi-group, the nonlinearity $f$ is locally Lipschitz on $X$, satisfies $f(0)=0$, and admits arbitrarily small Lipschitz constants near
the equilibrium~$x=0$.   The spectrum of $A$ is divided in the {\em stable part} with eigenvalues in the left-half plane, the {\em unstable part} which lies
in the right-half plane, and the {\em center part} which lies on the imaginary axis. Assumptions are made on the dimensions of the corresponding spectral
subspaces of $X$, and the   associated flows (if the spaces are infinite-dimensional) so as to represent two main scenarios: the {\em dissipative case} (D) on
the one hand, and the {\em conservative case} (C) on the other hand. For (D) one demands that only the {\em stable} subspace be infinite-dimensional
and that the associated semigroup is exponentially stable. For (C) only the {\em center} subspace is infinite dimensional, which is precisely what occurs in Hamiltonian
problems. 

Bates and Jones then verified that the abstract center-stable manifold which they constructed  in~\cite{BJ} applies to stationary 
solutions of NLKG  under {\it the radial symmetry restriction}, for the power nonlinearity \eqref{power f} with $p<\f{d}{d-2}$, $d\ge 3$, where the upper bound on $p$ was required to ensure that the nonlinearity is locally Lipschitz $H^1\to L^2$. 
They also showed that if the linearized operator has no nonzero radial functions in its kernel, then 
\begin{enumerate}
\item Every solution starting on the center-stable manifold stays there forever in positive times, remaining in a small neighborhood of the stationary solution. 
\item Every solution starting in that small neighborhood, but off the manifold, must exit the neighborhood in  finite positive time. 
\end{enumerate}
The kernel condition holds for the ground state (the positive stationary solution), by work of Weinstein~\cite{Wein}. 
 
Gesztesy, Jones, Latushkin, and Stanislavova \cite{GJLS} demonstrated that the Bates--Jones theory applies to the nonlinear Schr\"odinger equation (NLS) with a spatially localized nonlinearity. Notice that the radial restriction for NLKG prohibits both the spatial translations and the Lorentz transforms, and so the soliton manifold is reduced to a fixed stationary solution. Similarly, the localized nonlinearity of~\cite{GJLS} destroys the scaling, translation, and Galilean invariance, so that the soliton can change only with respect to the phase parameter. Indeed, as we will explain below, moving solitons represent a serious obstacle to the Bates--Jones approach. 

On the other hand, the second author \cite{S} developed the Lyapunov-Perron (LP) method for the ground state of the cubic NLS in~$\R^{3}$, without imposing any symmetry assumptions, but in a weighted $H^{s}$-space (or an unweighted $L^{1}$-based space). In this approach, the soliton is allowed to move. Recently, Beceanu \cite{Bec2} extended the latter work to the critical space $\dot H^{1/2}$ which is bigger than the energy space. 
Finally, and partly based on a novel approach to linear dispersive estimates developed by Beceanu~\cite{Bec1}, 
the authors proved in~\cite{NLKGnonrad} that the LP approach can be carried out for NLKG in the energy space without any symmetry restrictions. 

However, an essential difficulty in applying the LP method to a nonlinear dispersive equation (without dissipation) is that it requires global dispersive estimates, which in turn necessitates   fine spectral information, such as the absence of {\em  threshold resonances} and of so-called {\em spurious\footnote{This refers to eigenvalues which do not result from symmetries of the equation.} eigenvalues}; alternatively, in the presence of such spurious eigenvalues one might hope to invoke the {\em  Fermi golden rule}. Those conditions are in general very hard to check, even for the ground state (apart from the one-dimensional case \cite{KS} where purely analytical arguments are available), although there has been some recent progress in this direction \cite{DS,FMR,MS,CHS}.  

While the LP method requires stronger ingredients, it also leads to more detailed conclusions. More specifically, one obtains 
that solutions starting on the center-stable manifold scatter to the soliton manifold in forward time. In other words, the distinction between the LP approach
and the Hadamard approach is roughly tantamount to the distinction between {\em asymptotic and orbital stability} of solitary waves. 

In this paper, we employ the Hadamard method in the nonradial setting. Our main challenge is to extend the result by Bates and Jones to the family of traveling waves, rather than stationary ones. We therefore have to investigate the dynamics along the soliton manifold as well, which is usually called the modulational analysis in the stability problem of solitons. 
In our setting, the soliton manifold has $2d$ dimensions corresponding to the relativistic momentum and position vectors in $\R^d$. 

Those parameters can be fixed by means of a Lorentz transform which reduces the total momentum to zero, and by using a coordinate moving with the soliton. In doing so,  we encounter a derivative loss due to the translation, i.e.,  a transport term, in the modulated equation for the difference of two solutions, which disables the contraction argument for the graphs in the energy norm. This difficulty is not an artifact of the coordinate choice, but a natural consequence of the two facts that the solitons are translated by the flow, while the translation is not Lipschitz continuous in any Sobolev space. The same problem occurs for any other continuous group action involving a coordinate change, such as scaling or rotation. 

We overcome this difficulty by introducing a {\em nonlinear quasi-distance} in the energy space, for which the spatial translation becomes Lipschitz continuous, while the topology remains the same. Using the contraction mapping principle with this distance for the continuous spectral part, we are able to carry out the Hadamard method in the presence of  modulational parameters.

A more technical issue concerns allowing nonlinearities all the way up to the $H^1$ critical power, i.e., for $p<(d+2)/(d-2)$, while Bates and Jones assumed $p<d/(d-2)$. 
This is easily resolved by using the Strichartz estimate for the free Klein-Gordon equation, and by relaxing some flow-invariance conditions by constant multiples. 

Since the description of dynamics around the manifolds ((1)-(2) above) is also extended to the current setting, we can easily observe that the maximal backward evolution of the center-stable manifold is identical, in a small neighborhood, to the forward trapping set $\T_+$: the collection of initial data for which the solution (of the original NLKG) stays in the small neighborhood for large times. 

In the special case where the soliton manifold is generated from the ground state, we can combine the above result with the one-pass theorem in \cite{NLKGnonrad} as well as the openness and connectedness of the forward scattering set $\cS_+$ and the forward blow-up set $\B_+$, thereby concluding that $\T_+$ separates locally and globally all the solutions with energy at most slightly above the ground state energy into $\cS_+$ and $\B_+$. Therefore, the conclusion of \cite{NLKGnonrad} is extended to the range 
\EQ{
 d\in\N, \quad 1+\frac{4}{d} < p < 1+\frac{4}{d-2},\pq p\ge 2,}
{\em except for the following scattering statement on $\T_+$}:  all solutions in $\T_+$ scatter to the soliton manifold as $t\to\I$. 
This statement was proved in \cite{NLKGnonrad} for $d=p=3$  by means of the LP method using the following {\em gap property} of the linearized operator $L_{+}$: 
\[
(0,1) \cap \s(L_{+})=\emptyset, \text{\ \ and there is no threshold eigenvalue or resonance} 
\]
This is proved (at least for the radial case) in~\cite{CHS}. 
Note that the numerical analysis of~\cite{DS} suggests that the absence of threshold resonances and spurious  eigenvalues {\em fails} for some powers in $(1+4/3,3)$ for $d=3$, where the LP method without any hypothesis (typically the Fermi golden rule) is not so far available. 
Also note that the lower bound $1+4/d$ is required by the proof of the one-pass theorem, but not by the Hadamard construction in this paper, while the Lyapunov-Perron method also needs it in order to work in the energy space. 

To state the main result, we clarify the assumptions on the nonlinearity $f$ and on the stationary solution: 
\EQ{ \label{asm f}
 \pt f\in C^2(\R;\R),\pq 0=f(0)=f'(0), 
 \prq \forall a\in\R,\pq |f''(a)|\lec \CAS{1+|a|^{p-2} &(d\ge 3, 2\le \exists p<\frac{d+2}{d-2})\\ 1+|a|^{p-2} &(d=2, 2\le \exists p<\I)\\ \text{arbitrary} &(d=1).}}
To have $C^1$ manifolds, we assume a bit more regularity: for some $p>2$ in the above range,  
\EQ{ \label{asm f'}
 |f''(a_1)-f''(a_2)| \lec (|a_1-a_2|^{p-2}+|a_1-a_2|)[1+|a_1|^{p-3}+|a_2|^{p-3}].}
These assumptions are satisfied for example by 
\EQ{
 f(u) = \sum_{k:\text{finite}} \la_k |u|^{p_k-1}u, \pq \la_k\ge 0,}
provided that all $p_k>2$ are in the range \eqref{asm f}.  

Let $Q\in H^1(\R^d)$ be a stationary solution of NLKG, i.e., a weak solution of the elliptic PDE 
\EQ{ \label{static eq}
 -\De Q + Q = f(Q).}
Standard arguments imply that $Q\in H^2$ with exponential decay as $|x|\to\I$. For existence, see the classical work by Berestycki, Lions~\cite{BerLions}. 
The action of the  Lorentz transforms and the spatial translations generate the traveling wave family parametrized by the relativistic momentum $\vec p\in\R^d$ and position $\vec q\in\R^d$: 
\EQ{ \label{def Qpq}
 Q(\vec p,\vec q):=Q(x-\vec q+\vec p(\LR{\vec p}-1)|\vec p|^{-2}\vec p\cdot(x-\vec q)),}
so that each traveling wave can be written in the form 
\EQ{
 u(t)=Q(\vec p,\vec q(t)), \pq \f{d}{dt}\vec q(t)=\frac{\vec p(t)}{\LR{\vec p(t)}}.}
For brevity, the spatial translate is denoted also as
\EQ{ \label{def Qc}
 Q_c(x) := Q(x-c).}
The vector form is denoted by 
\EQ{ \label{def vQ}
 \vec Q:=(Q,0), \pq \vec Q(p,q):=(Q(p,q),-\frac{\vec p}{\LR{\vec p}}\cdot\na Q(p,q)),}
and the soliton manifold of $Q$ is defined as
\EQ{ \label{def sol mfd}
 \Sm(Q) := \{\vec Q(\vec p,\vec q)\}_{\vec p,\vec q\in\R^d} \subset \HH,}
which is a $C^1$ manifold of dimension $2d$. The linearized operator at $Q$
\EQ{ \label{def L+}
 L_+:=D^2-f'(Q)=-\De+1-f'(Q), \pq D:=\sqrt{1-\De}}
is self-adjoint on $L^2$ with a finite number of eigenvalues and continuous spectrum $\s_c(L_+)=\s_{ac}(L_+)=[1,\I)$. The translation invariance of NLKG implies that $L_+\na Q=0$. The only assumption on $Q$ in this paper is 
\EQ{ \label{L+ ker}
 L_{+}^{-1}(0) = \mathrm{span}\{\na Q\}.}
This is a well-known property of the ground states. To be more precise, by an argument of Weinstein~\cite{Wein}, it holds for the ground state~$Q$
provided no radial function lies in the kernel of~$L_{+}$.  The latter holds for any subcritical monomial nonlinearity (as well as others), see Lemma~2.3
in~\cite{NLKGrad}, for example. 
Moreover, \eqref{L+ ker} seems to be 
 a natural assumption for any other radial static solution. For non-radial static solutions, we have to include angular derivatives as well, but we do not consider such solutions in this paper. Although we will not explicitly use the radial symmetry of $Q$, the reader may assume it without losing anything throughout the paper. 

\begin{thm}
Let $d\in\N$ and assume that $f$ satisfies \eqref{asm f}. Let $Q$ be a static solution \eqref{static eq} and assume that its linearized operator $L_+$ satisfies \eqref{L+ ker}. Then there is a Lipschitz manifold $\M_{cs}$ in $\HH$ containing the soliton manifold $\Sm(Q)$, with the following properties:
\begin{enumerate}
\item The codimension of $\M_{cs}$ in $\HH$ equals  the total dimension of the eigenspaces of $L_+$ corresponding to negative eigenvalues, which we denote by~$K$. 
\item $\M_{cs}$ is invariant under the forward evolution of NLKG \eqref{NLKG}.
\item $\M_{cs}$ is invariant under spatial translations. 
\item For every Lorentz transform and every $(\vec p,\vec q)$, there is a small neighborhood of $\vec Q(\vec p,\vec q)$ such that the Lorentz transform of any forward global solution starting from $\M_{cs}$ within this neighborhood  remains on  $\M_{cs}$ for all $t>0$. 
\item $\M_{cs}$ is normal at $\vec Q$ to the vector $(-k\rh,\rh)$ for any $\rh\in H^1$ solving $L_+ \rh = -k^2 \rh$ for some $k>0$. In other words, 
\EQ{
 \LR{u_2|\rh} = -k\LR{u_1-Q|\rh} + o(\|u_1-Q\|_{H^1}+\|u_2\|_{L^2}).}
\item For any neighborhood $\cO$ of $\Sm(Q)$, there is a smaller neighborhood $\cO'$, such that every solution starting from $\cO'\cap\M_{cs}$ remains in $\cO\cap\M_{cs}$ for all $t>0$. 
\item There is a neighborhood $\cO$ of $\Sm(Q)$ such that every solution starting from $\cO\setminus\M_{cs}$ exits $\cO$ in finite positive time.
\item If in addition $f$ satisfies \eqref{asm f'}, then $\M_{cs}$ is $C^{1,\al}$, where $\al=\min(1,p-2)$. 
\end{enumerate}
\end{thm}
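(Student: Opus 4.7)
The plan is to carry out the Hadamard graph transform in the presence of the $2d$-dimensional modulation group generated by Lorentz boosts and spatial translations, resolving the derivative loss from the moving frame by working in a nonlinear quasi-distance on $\HH$ under which spatial translation is Lipschitz. Near $\Sm(Q)$ I decompose a solution as $\vec u(t)=\vec Q(\vec p(t),\vec q(t))+\vec\fy(t)$ and impose symplectic orthogonality of $\vec\fy$ to the tangent frame $\p_{\vec p}\vec Q,\p_{\vec q}\vec Q$; the kernel hypothesis \eqref{L+ ker} together with the implicit function theorem fix $(\vec p(t),\vec q(t))$ uniquely for small $\vec\fy$ and produce ODEs of the form $\dot{\vec q}=\vec p/\LR{\vec p}+O(\|\vec\fy\|)$, $\dot{\vec p}=O(\|\vec\fy\|^2)$ together with a Hamiltonian equation for $\vec\fy$ containing the transport term $\dot{\vec q}\cdot\na\vec\fy$ from the moving frame.

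Next I split $\vec\fy=\vec\fy_u+\vec\fy_{cs}$ using the spectral projection of the linearized Hamiltonian generator $\mat{0 & 1 \\ -L_+ & 0}$: the $K$-dimensional unstable part $\vec\fy_u$ comes from the negative eigenvalues of $L_+$, and the symplectic complement $\vec\fy_{cs}$ carries the continuous spectrum together with the center modes. The candidate $\M_{cs}$ is written as the graph of a Lipschitz map $h:(\vec p,\vec q,\vec\fy_{cs})\mapsto\vec\fy_u\in\R^K$, obtained as a fixed point of the graph transform $\Phi$ which pulls back any such graph under the backward NLKG flow. Invariance of a cone $\|\vec\fy_u\|\le\ka\|\vec\fy_{cs}\|$ in a small neighborhood is preserved thanks to the exponential growth of the unstable modes; the nonlinearity is controlled by Strichartz estimates for the free Klein-Gordon group, which together with a relaxation of flow-invariance by constant multiples is what allows us to reach the $H^1$-critical range $p<(d+2)/(d-2)$.

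The principal obstacle is the contraction of $\Phi$: the transport term $\dot{\vec q}\cdot\na\vec\fy$ loses one derivative, so two solutions whose initial positions differ cannot be compared in the $H^1\times L^2$ norm. I would resolve this by replacing the energy norm with the quasi-distance
\EQ{
 \vr(\vec u,\vec v):=\inf_{c\in\R^d}\bigl(|c|+\|\vec u-T_c\vec v\|_\HH\bigr),
}
where $T_c$ denotes translation by $c$. This $\vr$ is locally equivalent to the $\HH$-topology, but under it translation has Lipschitz constant $1$, so the troublesome transport is absorbed into the infimum over $c$. The graph transform becomes a strict contraction in the sup-norm of $\vr$ on a suitable complete set of Lipschitz graphs, and the Banach fixed point theorem yields the unique $h$; the manifold $\M_{cs}$ is its graph.

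Properties (1)-(7) then follow from the construction: (1) is the graph dimension count; (2)-(4) are built into the ansatz via flow invariance of the graph under NLKG, translation equivariance of the modulation frame, and Lorentz covariance of $\vec Q(\vec p,\vec q)$; (5) reads the normal to $\M_{cs}$ at $\vec Q$ off the orthogonality condition defining $\vec\fy_{cs}$, namely the span of the unstable eigenvectors $(-k\rh,\rh)$; (6) follows by iterating cone invariance together with the modulation ODEs; and (7) is the expulsion of off-manifold data driven by exponential growth along an unstable mode, which eventually forces the solution out of any sufficiently small neighborhood. Finally, (8) is obtained by differentiating the fixed-point equation and running a second contraction for $Dh$ in an $\al$-H\"older-weighted variant of $\vr$; here \eqref{asm f'} supplies the H\"older continuity of the Nemytskii derivative $u\mapsto f'(u)$ with exponent $\al=\min(1,p-2)$, and this regularity propagates through the graph transform to give $h\in C^{1,\al}$.
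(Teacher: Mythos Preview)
Your outline captures the main ingredients (graph transform, quasi-distance to tame translation, Strichartz to reach the full subcritical range), but it diverges from the paper at two structural points that the authors single out as essential, and your plan as written would likely get stuck there.

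\textbf{Modulation on $(\vec p,\vec q)$ versus reduction to $P=0$.} You propose to modulate on the full $2d$-dimensional group, carrying Lorentz boosts through the graph transform. The paper does \emph{not} do this: it first fixes $P(u)=0$ by a single Lorentz transform, then modulates only on the spatial center $c\in\R^d$, constructing $\M_{cs,0}\subset\HH_0$. The Lorentz extension to $\M_{cs}$ is performed \emph{afterwards}, by transporting solutions on $\M_{cs,0}$, and is explicitly non-uniform in the boost parameter (hence the careful wording of property~(4)). If you modulate on $\vec p$ inside the graph transform, the linearized operator at $\vec Q(\vec p,\vec q)$ depends on $\vec p$, the spectral projections move, and the ``transport'' term from $\dot{\vec p}$ is not a spatial translation, so your quasi-distance $\vr$ does nothing for it. The paper's reduction sidesteps all of this.

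\textbf{Orthogonality during the contraction, and localization.} You impose symplectic orthogonality of $\vec\fy$ to the tangent frame throughout and run the graph transform on $(\vec p,\vec q,\vec\fy_{cs})$. The paper explicitly abandons this: it says that localizing the nonlinearity while ``keeping the above orthogonal structure'' appears ``extremely hard'' because the acceleration term $A(v)\cdot\na(\vec Q+v)$ is naturally unbounded. Instead, the paper multiplies the entire nonlinearity by a cutoff $\chi_\de(v)$, obtains a globally well-posed equation on all of $\HH$ with a globally small Lipschitz nonlinearity, and runs the graph transform for $G:P_{\ge 0}\HH\to P_-\HH$ on the whole space with \emph{no} orthogonality constraint. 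The restriction to $\HH_\perp$ (your orthogonality) is imposed only at the end, via a separate fixed-point argument for the $\nu$-component. Your proposal omits the localization step entirely; without it the Hadamard iteration is not defined outside a small ball, and the degree argument showing the evolved graph is still a graph over all of $P_{\ge 0}\HH$ has no content.

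A smaller but related point: your quasi-distance $\vr(\vec u,\vec v)=\inf_c(|c|+\|\vec u-T_c\vec v\|_\HH)$ lacks the size-dependent weight $\phi_\de(\|v\|_E)$ that the paper attaches to $|c|$. That weight is what makes the key difference estimate (Lemma on $\tm v^\pa$) go through in the region $\|v\|\gg\de$ where the localized equation is purely linear; with the unweighted $\vr$ the bound $|q|\|\ga\|_\HH\lec\de\,\tm v^\pa$ used repeatedly there fails. Finally, for (8) the paper does not run a standard contraction for $Dh$: it introduces a bespoke notion of \emph{mobile differentiability} (derivatives along paths $\fy\mapsto(\fy+\e\psi)(x+\e q)$) and proves closedness of a class $\G_{\ell,\de}^{\al,\La}$ under the flow, because ordinary Fr\'echet differentiability in $\HH$ does not interact well with the quasi-distance.
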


The corresponding statement for a center-unstable manifold follows simply by the time inversion, so we omit it. However, in the proof we will actually consider the center-unstable manifold, for which the forward evolution is used as a contraction mapping in the Hadamard method.  The {\em center manifold} is obtained by intersecting the center-stable with the center-unstable manifold. 
It is of codimension~$2K$ and is bi-invariant.

Properties (6), (7) characterize $\M_{cs}$ as the set of solutions which stay close to $\Sm(Q)$ for all $t\ge0$. Since the Lyapunov-Perron method looks for such solutions from the beginning, it will yield a subset of $\M_{cs}$, and indeed the same manifold (locally), provided that the codimension is the same. An advantage of the Lyapunov-Perron method is that it implies the scattering to the soliton manifold for the solutions on $\M_{cs}$ (cf.~\cite{NLKGnonrad}). It will be interesting to see what happens when some spectral condition breaks down, e.g., if there is a threshold resonance.

We also obtain a stable and unstable manifold theorem. Recall the definition of $K$ from the previous theorem. 

\begin{thm}
\label{thm:stable}
Under the assumptions of the previous theorem, there exist Lipschitz manifolds $\M_u$ and $\M_s$ of dimensions $K+d$ with the following properties:
\begin{enumerate}
\item $\M_{s}$ is invariant under the forward evolution of the NLKG \eqref{NLKG}.
\item $\M_{s}$ is invariant under spatial translations. 
\item every solution starting on $\M_s$ converges exponentially to $\vec Q(\cdot-c(t))$ as $t\to\I$ where $\dot c(t)\to0$ exponentially as $t\to\I$. 
\item there exists $\de>0$ small such that $\M_s$ is a Lipschitz graph over $B_\de(0)\times \R^d$, with $B_\de(0)$ being a $\de$-ball in the eigenspace
corresponding to the negative eigenvalues, and the $\R^d$-component deriving from spatial translations.  
\item $\M_u$ is obtained from $\M_s$ by reversing time. 
\end{enumerate}
\end{thm}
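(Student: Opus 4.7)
The plan is to realize $\M_s$ as a Lipschitz graph over $B_\de(0)\times\R^d$, where $B_\de(0)$ sits in the $K$-dimensional stable eigenspace of the linearization at $\vec Q$ and the $\R^d$ factor records a translation parameter. The natural characterization is that $\M_s$ is the subset of $\M_{cs}$ consisting of those initial data whose forward orbit converges exponentially to $\Sm(Q)$; this already accounts for the dimension count, since after forcing the continuous-spectral part to decay one is left with $K$ stable modes plus $d$ translation choices.

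My approach is to adapt the Hadamard scheme used for $\M_{cs}$ but in an exponentially weighted setting. Fix $c=0$, write $\vec u(t)=\vec Q_{c(t)}+\eta(t)$ with $c(t)\in\R^d$ pinned by symplectic orthogonality of $\eta(t)$ to the translation generators $\p_j\vec Q_{c(t)}$, and split $\eta$ spectrally into its $K$-dimensional unstable and stable parts (arising from $L_+\rho_j=-k_j^2\rho_j$) plus a center part carrying $\s_{ac}(L_+)$. Choose $0<\ka<\min_j k_j$ and seek the solution in a space of forward trajectories with the weight $e^{\ka t}$, so that $\sup_{t\ge0}e^{\ka t}\|\eta(t)\|_\HH<\I$ (augmented by a Strichartz component for the nonlinear estimates). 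Parametrize initial data by the stable component $v_s\in B_\de(0)$, and determine the other components by contraction: $\eta_u(0)$ and $\eta_c(0)$ are fixed by backward integrals of $\Pi_u\NN$ and $\Pi_c\NN$ from $+\I$, which converge because $\NN$ is quadratically small in $\eta$; the modulation ODE for $c(t)$ has right hand side that is also quadratic in $\eta$, yielding $\dot c(t)=O(e^{-2\ka t})$ and $c(t)\to c_\I$ exponentially. This produces $W^s(\vec Q)$, a $K$-dimensional Lipschitz graph in $v_s$. Taking $\M_s:=\bigcup_{c\in\R^d}\tau_c W^s(\vec Q)$ gives the claimed $(K+d)$-dimensional translation-invariant Lipschitz manifold, establishing (1)--(4) at once. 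Property (5) follows by applying the same construction to the time-reversed NLKG.

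The hard part is the center component. The linearized evolution on the continuous spectrum of the Hamiltonian generator is unitary, so exponential decay of $\eta_c$ is not automatic; it emerges only because $\Pi_c\NN$ is quadratically small along the trajectory, which makes the backward integral from $+\I$ converge and yields $\|\eta_c(t)\|_\HH\lec e^{-2\ka t}$. Making this rigorous requires an exponentially weighted Strichartz-type estimate for the linearized evolution projected off the discrete spectrum, obtainable from the unperturbed Strichartz estimates by a Neumann expansion in the potential $f'(Q)$, uniformly for $\ka$ strictly below the spectral gap. Once that is established the contraction closes by the same mechanism as in the construction of $\M_{cs}$, including in particular the nonlinear quasi-distance device used there to absorb the translation-induced derivative loss in the modulation equation.
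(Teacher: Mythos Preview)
Your outline is a Lyapunov--Perron construction, not a Hadamard one: you fix the stable datum $v_s$ and solve for the whole forward trajectory (unstable, center, modulation) by backward Duhamel integrals from $+\infty$ in an $e^{\ka t}$-weighted space. The paper instead carries out the genuine graph-transform argument for the \emph{unstable} manifold and then time-reverses. Concretely, it considers Lipschitz graphs $G:P_+\HH\to P_{\le 0}\HH$ (so the finite-dimensional unstable component parametrizes the infinite-dimensional center-stable part), equips this class $\G_{\ell,\de}^+$ with the quasi-distance $d_+$ built from the mobile distance $\tm$, and shows that the induced graph map $\U(t)$ is a contraction for $t$ large (Lemmas~3.5--3.7, Theorem~3.9). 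Exponential decay of solutions on $\gr{G_*^+}$ as $t\to-\infty$ is then read off \emph{a posteriori} from the graph bounds, and $\M_u$ is obtained in the $u$-picture (Section~4); $\M_s$ is its time-reversal.

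Both routes are valid here, but they buy different things. The paper's graph-transform approach recycles verbatim the local-in-time increment bounds (Lemmas~3.1--3.2) already proved for the center-stable case; one only swaps which side of the spectral splitting carries the small Lipschitz constant, and the contraction time must be taken large to beat the quasi-triangle constant of $d_+$. No global Strichartz or dispersive input for $e^{J\LL t}P_\ga$ is needed beyond boundedness in the $E$-norm. Your LP route is more direct conceptually but demands a fresh contraction in weighted trajectory space; the ``exponentially weighted Strichartz via Neumann expansion in $f'(Q)$'' step is not quite available globally in time without spectral hypotheses the paper deliberately avoids---one should instead chop into unit intervals, use the free Strichartz with $f'(Q)$ as perturbation on each (as in Lemma~3.1), and sum with the weight. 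With that adjustment your scheme closes, and the mobile-distance device is in fact less essential in the stable case than in the center-stable one, since $\dot c=O(e^{-2\ka t})$ makes the transport term integrable.
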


\section{Preliminaries}
Here we fix some notation. For any two elements $v^0,v^1$ in a normed space $V$, the ordered pair and their difference are denoted by 
\EQ{ \label{def diff}
 v^\pa:=(v^0,v^1)\in V^2, \pq \|v^\pa\|_V=\|v^0\|_V+\|v^1\|_V, \pq \diff v^\pa:=v^1-v^0,}
respectively. More generally, for any map $M:V\times W\times\cdots$ and elements $v^j\in V$, $w^j\in W$,\dots, the mapped pair is denoted by 
\EQ{
 M(v^\pa,w^\pa,\dots):=(M(v^0,w^0,\dots),M(v^1,w^1,\dots)).}
For any $R\in\R$ and $\de>0$, the minimum is denoted by 
\EQ{ \label{def min}
 \bde{R}:=\min(R,\de).}
 As usual, $a\lec b$, $a\gtrsim b$,  and $a\simeq b$ involve implicit multiplicative constants. 
 
\subsection{Equation and spectrum}
The energy space $\HH=H^1\times L^2\subset(L^2)^2$ is endowed with the usual inner product 
\EQ{ \label{def prdH}
 \LR{\fy,\psi}_\HH := \int_{\R^d} \big[ \na\fy_1\cdot\na\psi_1+\fy_1\psi_1+\fy_2\psi_2 \big]\,   dx}
and the $L^2$ duality coupling 
\EQ{ \label{def prdL2}
 \LR{\fy|\psi} :=\int_{\R^d} \big[ \fy_1(x)\psi_1(x)+\fy_2(x)\psi_2(x) \big]\, dx,}
as well as the symplectic form 
\EQ{ \label{def om}
 \om(\fy,\psi):=\LR{J\fy|\psi}=\int_{\R^d} \big[ \fy_2(x)\psi_1(x)-\fy_1(x)\psi_2(x) \big] \, dx,}
where $J$ is the skew-symmetric matrix 
\EQ{ \label{def J}
 J := \mat{0 & 1 \\ -1 & 0}, \pq J^2=-\mat{1 & 0 \\ 0 & 1}.}
Let $\LL$ be the self-adjoint operator on $L^2$ with domain $H^2\times L^2$
\EQ{ \label{def LL}
 \LL := \mat{L_+ & 0 \\ 0 & 1} = \mat{-\De+1-f'(Q) & 0 \\ 0 & 1}.}
Its free version is denoted by 
\EQ{ \label{def D}
 \D := \mat{D^2 & 0 \\ 0 & 1} = \mat{-\De+1 & 0 \\ 0 & 1}.}
Then the linearized equation around $\vec Q=(Q,0)$ is 
\EQ{
 v_t = J\LL v.}
The spectrum of $J\LL$ is given in terms of that of $L_+$:
\[
\s(J\LL) = \pm \sqrt{-\s(L_{+})}
\]
Since $f'(Q)$ is bounded and exponentially decreasing, there are $0<\uk\le 1<\ok$ and a finite set $K\subset[\uk,\ok]$ such that 
\EQ{ \label{spec L+}
 \s(L_+)\setminus[\uk^2,\I)=\{0\}\cup\{-k^2\mid k\in K\}.}
With slight abuse of notation, we let $K$ count the multiplicity of each negative eigenvalue $-k^2$ as well. For each $k\in K$, let $\rh_k\in\cS(\R^d)$ be an eigenfunction satisfying 
\EQ{ \label{def rk}
 L_+\rh_k=-k^2\rh_k, \pq \|\rh_k\|_2=1.}
The (generalized) eigenfunctions of $J\LL$ are 
\EQ{ \label{def gk}
 \pt J\LL g_{k\pm}=\pm k g_{k\pm}, \pq J\LL\na\vec Q = 0, \pq J\LL J\na\vec Q = -\na\vec Q, \pq g_{k\pm}:=\mat{1 \\ \pm k}\frac{\rh_k}{\sqrt{2k}},}
which satisfy 
\EQ{ \label{def H}
 \pt \om(\p_\al\vec Q,J\p_\be\vec Q)=H_{\al,\be}(Q):=\LR{\p_\al Q|\p_\be Q}, 
 \pq \om(g_{k+},g_{k-})=1.}
The corresponding symplectic (spectral) decomposition takes the form 
\EQ{ \label{spec decp}
 v \pt= \sum_{\pm, k\in K} \la_{k\pm}g_{k\pm} + \mu\cdot\na\vec Q + \nu\cdot J\na \vec Q + \ga, 
 \pr \la_{k\pm}:=P_{\pm k}v :=\om(v,\pm g_{k\mp}), 
 \pr \mu:=P_\mu v:=H(Q)^{-1}\om(v,J\na\vec Q), \pq \nu:=P_\nu v:=H(Q)^{-1}\om(v,-\na\vec Q),
 \pr \ga:=P_\ga v:=v-\sum_{\pm,k}\la_{\pm k}g_{k\pm} - \mu\cdot\na\vec Q - \nu\cdot J\na\vec Q.}
We will use the following projections as well: 
\EQ{ \label{spec combi}
 \pt v_\pm:=P_\pm v:=\sum_{k\in K}\la_{k\pm}g_{k\pm}, 
 \pq v_d:=P_dv:=v-P_\ga v, 
 \pr v_0:=P_0v:=\na\vec Q\cdot\mu+J\na\vec Q\cdot\nu, 
 \pq v_{\ge 0}:=P_{\ge 0}v:=v-v_-,
 \pr v_{\ga+}:=P_{\ga+}v:=\ga+v_+, 
 \pq v_{0\pm}:=P_{0\pm}v:=v_0+v_\pm,}
and the corresponding subspaces $\HH_\pm:=P_\pm(\HH)$. 
Fixing a small number 
\EQ{ \label{def ka}
 0 < \ka \ll \uk,}
we define the energy norm on $\HH$ to be  
\EQ{ \label{def E}
 \|v\|_E^2 := \sum_{k\in K}|\la_k|^2 + |\nu|^2 + \ka^2|\mu|^2 + \LR{\LL \ga|\ga} \simeq \|v\|_{\HH}^2,}
where the final equivalence follows from the orthogonality of $\ga_1$:
\EQ{
 0=\LR{\ga_1|\na Q}=\LR{\ga_1|\rh_k} \qquad\forall\; k, }
together with \eqref{L+ ker}, since $\LR{\LL\ga|\ga}=\LR{L_+\ga_1|\ga_1}+\|\ga_2\|_2^2$.

Let $\vec u\in C(I;\HH)$ be a solution \eqref{NLKG}, then $v=(v_1,v_2):=\vec u(x+c)-\vec Q$ solves 
\EQ{ \label{eq0 v}
 \pt v_t = J\LL v + \dot c\cdot(\na \vec Q+\na v) + \vec N(v), \pq \vec N(v)=(0,N(v)),}
where $N:\HH\to H^{-1}$ carries the superlinear part 
\EQ{ \label{def N}
 N(v):=f(Q+v_1)-f(Q)-f'(Q)v_1=o(v_1).}
We remark that $v_2\not=\dot v_1$ unless $\dot c=0$.

The conserved momentum can be rewritten as 
\EQ{ \label{def P}
 P(u):=\om(\vec u,\na\vec u)/2=\om(v,\na\vec Q)+\om(v,\na v)/2.}
By means of a Lorentz transform, we can reduce the dynamics near the soliton manifold $\Sm(Q)$ to the invariant subspace 
\EQ{ \label{def H0}
 \HH_0:=\{\vec u\in\HH \mid P(u)=0\}.}
Furthermore, we can restrict $v$ to the subspace 
\EQ{ \label{def Hperp}
 \HH_\perp:=\{v\in\HH \mid \om(v,\na\vec Q)+\om(v,\na v)/2=0,\ \om(v,J\na\vec Q)=0\},}
by choosing $\dot c$ so that 
\EQ{
 0=\p_t\om(v,J\na\vec Q)=(H(Q)-\LR{\na^2Q|v_1})\dot c+\om(v,\na\vec Q).}
Hence the evolution for small $v$ on $\HH_\perp$ is given by 
\EQ{ \label{eq1 v}
 \pt v_t = J\LL v + A(v)\cdot\na( \vec Q+ v) + \vec N(v),
 \prq A(v):=(H(Q)-\LR{\na^2Q|v_1})^{-1}\om(v,\na v)/2.}
This is a first-order autonomous equation in $\HH$ with the superlinear term 
\EQ{ \label{def M}
 M(v):=A(v)\cdot\na( \vec Q+ v) +\vec N(v).}

In order to implement the Hadamard method, we need to localize the nonlinear part $M(v)$ near $0$ so that it becomes a {\em small Lipschitz term globally in the energy space}~$\HH$. It seems extremely hard to do this keeping the above orthogonal structure, since the acceleration or the modulation term is naturally unbounded, unless the linearized operator is modified depending on the distance of $v$ from $0$. 
Therefore we will not enforce the orthogonality conditions, but instead solve a localized version of the above autonomous equation in the whole energy space~$\HH$. After constructing a center-unstable manifold by the Hadamard method for the localized equation, we will restrict that manifold to the subspace $\HH_\perp$ in a small neighborhood of $0$ to obtain a center-unstable manifold for the true equation. 

In the case of the unstable manifold, the exponential decay of $v$ as $t\to-\I$ ensures that the manifold for the localized equation around $0$ falls into $\HH_\perp$, so that we can automatically get the manifold of the true equation.

\subsection{Mobile distance}
The most serious obstacle to carrying out the graph transform method in the nonradial setting results from the contraction step in the construction
of the invariant graphs, where the presence of the unbounded translation term causes problems. 
To remedy this, we introduce the {\it mobile distance} on $\HH$. Heuristically speaking, the standard $L^p$ or Sobolev-type norm is too tight for ``horizontal motion" $\fy\mapsto \fy(\cdot+x_0)$ compared with ``vertical motion" $\fy\mapsto \la\fy$. The mobile distance makes translation just as easy as amplification, without changing the topology.  

\begin{defn}
For any continuous increasing function $\phi:[0,\I)\to[0,\I)$ satisfying 
\begin{enumerate}
\item $\phi(a)\ge a$.
\item $a\le 2b\implies \phi(b)\le 4\phi(a)$, 
\end{enumerate}
the mobile distance $\mb:\HH\times\HH \to[0,\I)$ is defined by 
\EQ{ \label{def mobile}
 \mb(v^0,v^1)^2:=\inf_{q\in\R^d,\ j=0,1} \; \pt \|v^{1-j}-v^{j}(\cdot-q)\|_E^2 +|q|^2\phi(\|v^j\|_E)^2 .}
\end{defn}
Obviously, the infimum in \eqref{def mobile} is attained at some $q\in\R^d$. $\mb$ is not really a distance, but a quasi-distance on $\HH$. More precisely, we have 
\begin{prop} \label{prop mb}
$\mb$ in \eqref{def mobile} is a complete quasi-distance on $\HH$, satisfying
\begin{enumerate}
\item $\mb(v^0,v^1)\ge 0$ where the equality holds iff $v^0=v^1$.
\item $\mb(v^1,v^0)=\mb(v^0,v^1)$.
\item $\mb(v^0,v^1)\le C_d\,[\mb(v^0,v^2)+\mb(v^2,v^1)]$ for some absolute constant $C_d>0$. 
\item If $\mb(v^m,v^n)\to 0$ ($n,m\to\I$) then $v^n$ converges in $\HH$.
\end{enumerate}
Moreover, it satisfies with some absolute constant $C>0$, 
\EQ{ \label{est on mob}
 |\|v^0\|_\HH-\|v^1\|_\HH|+\|D^{-1}(v^0-v^1)\|_{\HH} \le C\mb(v^0,v^1) \le C^2\|v^0-v^1\|_\HH,}
where $D:=\sqrt{1-\De}$. These constants, $C$ and $C_d$, do not depend on the choice of $\phi$. 
\end{prop}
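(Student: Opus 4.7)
The plan rests on two facts used throughout: the equivalence $\|\cdot\|_E\simeq\|\cdot\|_\HH$ (already established) together with the translation invariance of $\|\cdot\|_\HH$, and the elementary Fourier estimate
\EQ{
 \|D^{-1}(v-v(\cdot-q))\|_{\HH} \lec |q|\,\|v\|_\HH,
}
which follows from $|1-e^{-iq\cdot\x}|\le|q||\x|$ applied componentwise on $L^2\times H^{-1}$. I would first establish the two-sided estimate \eqref{est on mob}, as it underlies everything else. The upper bound is immediate from $q=0$ in the infimum. For the lower bound, pick an attaining pair $(q,j)$ so that $\|v^{1-j}-v^j(\cdot-q)\|_E$ and $|q|\phi(\|v^j\|_E)$ are each bounded by $\mb(v^0,v^1)$. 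Splitting
\EQ{
 v^0-v^1 = \pm(v^j-v^j(\cdot-q)) \pm (v^j(\cdot-q)-v^{1-j}),
}
I bound the first summand in $\|D^{-1}(\cdot)\|_\HH$ by $|q|\|v^j\|_\HH \le |q|\phi(\|v^j\|_E) \le \mb$ using $\phi(a)\ge a$ and the Fourier estimate, and the second directly in $\HH$-norm by $\lec\mb$ via $E\simeq\HH$. The norm-difference half follows from the reverse triangle inequality in $\HH$-norm, using $\|v^j(\cdot-q)\|_\HH=\|v^j\|_\HH$. With \eqref{est on mob} in hand, properties (1) and (2) are immediate: $\mb=0$ forces $v^0=v^1$ via the lower bound, and symmetry is built into the $\min$ over $j\in\{0,1\}$ upon replacing $q$ by $-q$.

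The quasi-triangle inequality (3) is the main obstacle. Given near-optimizers $(q_a,j_a)$ for $\mb(v^0,v^2)=a$ and $(q_b,j_b)$ for $\mb(v^2,v^1)=b$, I would compose the two translations (with signs dictated by $j_a,j_b$) into a candidate pair $(q,j)$ for $\mb(v^0,v^1)$. The $E$-norm piece is bounded by a multiple of $a+b$ via the ordinary triangle inequality together with translation invariance of $\|\cdot\|_\HH$ and $E\simeq\HH$. The delicate point is estimating $|q|\phi(\|v^j\|_E)\lec a+b$: one has bounds $|q_a|\phi(\|v^{i_a}\|_E)\le a$ and $|q_b|\phi(\|v^{i_b}\|_E)\le b$ for indices $i_a\in\{0,2\},i_b\in\{1,2\}$ dictated by the optimizers, and these must be converted to an estimate involving $\phi(\|v^j\|_E)$. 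If the three norms $\|v^0\|_E,\|v^1\|_E,\|v^2\|_E$ are pairwise within a factor two of one another, the doubling condition (2) on $\phi$ makes all three $\phi$-values comparable within a universal factor, closing the estimate. Otherwise the norm-difference half of \eqref{est on mob} forces the smaller of any far-apart pair to be $\lec a+b$, hence all relevant $\|v^i\|_E\lec a+b$; iterating the doubling a bounded number of times then gives $\phi(\|v^j\|_E)\lec\phi(a+b)$, and the bounds on $|q_a|,|q_b|$ close the estimate up to a constant $C_d$ depending only on the equivalence constant of $E\simeq\HH$ and the doubling constant of $\phi$.

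Finally, for completeness (4), the estimate \eqref{est on mob} shows that an $\mb$-Cauchy sequence $\{v^n\}$ is Cauchy in both $\|D^{-1}(\cdot)\|_\HH$ (converging to some $w$, a priori in $L^2\times H^{-1}$) and in $\HH$-norm ($\|v^n\|_\HH\to L$), hence $\HH$-bounded. Extracting a weakly $\HH$-convergent subsequence and matching distributional limits identifies the weak limit with $w$, so $w\in\HH$ and the full sequence $v^n\rightharpoonup w$ weakly in $\HH$. To upgrade to strong convergence: for fixed large $n$ and $m\to\I$, the near-optimizing translations $q_{n,m}$ remain bounded (since one of $\phi(\|v^n\|_E)$ or $\phi(\|v^m\|_E)$ is bounded below, provided $L>0$; the case $L=0$ gives $v^n\to 0$ trivially). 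After extracting a further subsequence with $q_{n,m}\to q_n^*$, lower semicontinuity of $\|\cdot\|_E$ under weak $\HH$-convergence combined with continuity of $\phi$ yields $\mb(v^n,w)\to 0$ as $n\to\I$; then \eqref{est on mob} forces $\|v^n\|_\HH\to\|w\|_\HH$, and Radon--Riesz promotes weak plus norm convergence to strong convergence $v^n\to w$ in $\HH$. The step I expect to be the main obstacle is the case analysis in the quasi-triangle inequality, specifically reconciling the three weights $\phi(\|v^i\|_E)$ when the norms lie on widely different scales.
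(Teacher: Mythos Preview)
Your argument for \eqref{est on mob} and for (1), (2) is essentially the paper's. For the quasi-triangle inequality (3), your case split (all norms within a factor two vs.\ not) differs from the paper's (which centers on $v^0$ and separates the case $\|v^0\|_\HH\ll\min(\|v^1\|_\HH,\|v^2\|_\HH)$), but both are viable. However, your closing step in Case~2 does not work as written: from $\phi(\|v^j\|_E)\lec\phi(a+b)$ and $|q_a|\phi(\|v^{i_a}\|_E)\le a$ you cannot deduce $|q_a|\phi(\|v^j\|_E)\lec a$, because $\phi(\|v^{i_a}\|_E)$ may be far smaller than $\phi(\|v^j\|_E)$ (you have only an upper bound on $\|v^{i_a}\|$, not a lower one). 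The fix is immediate and simpler than what you wrote: once all three norms are $\lec a+b$, take $q=0$ and bound $\mb(v^0,v^1)\lec\|v^0-v^1\|_\HH\le\|v^0\|_\HH+\|v^1\|_\HH\lec a+b$. This is exactly how the paper disposes of its degenerate case.

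For completeness (4) you take a genuinely different route. The paper argues constructively: after passing to a subsequence along which $\mb(v^n,v^{n+1})<2^{-n}$ and using that the limiting norm is positive to control $|q_n|$, it telescopes the translations $c_n=\sum_{k\ge n}q_k\to0$ to get strong convergence of $\tau_{c_n}v^n$, hence of $v^n$. Your argument instead identifies the weak $\HH$-limit $w$ via the $D^{-1}$-Cauchy property, passes to the limit in the optimizing translations using lower semicontinuity of $\|\cdot\|_E$ to obtain $\mb(v^n,w)\to0$, and then invokes Radon--Riesz (weak convergence plus norm convergence implies strong convergence in a Hilbert space). Both are correct; the paper's telescoping is more hands-on and avoids appealing to uniform convexity, while yours is perhaps conceptually cleaner once the weak limit is in hand.
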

Hence $\mb$ defines the same topology as $\HH$, differing only in terms of uniformity. For example,  for any $\fy\in\HH$ we have 
\EQ{
 \lim_{n\to\I} \mb(\fy e^{inx_1},-\fy e^{inx_1})=O(1),} 
since $-\fy e^{inx_1}=\fy e^{in(x_1+\pi/n)}$, whereas $\|\fy e^{inx_1}-(-\fy e^{inx_1})\|_\HH=O(n)$ unless $\fy=0$. 
\begin{proof}[Proof of Proposition \ref{prop mb}]
(1) and (2) are obvious. For the left-most term of \eqref{est on mob}, and with $\tau_q v:= v(\cdot-q)$, 
\EQ{
 |\|v^0\|_\HH-\|v^1\|_\HH| \pt\le \inf_q \min(\|v^0-\tau_q v^1\|_\HH,\|v^1- \tau_q v^0 \|_\HH) \pr\lec \mb(v^0,v^1),}
for the second term, 
\EQ{
 \|D^{-1}(v^0-v^1)\|_{\HH} \pt\le \|D^{-1}(v^0- \tau_q v^1 )\|_{\HH}+\|D^{-1}(\tau_q v^1 -v^1)\|_{\HH}
 \pr\lec \|v^0-\tau_q v^1 \|_\HH + |q|\|\na D^{-1} v^1\|_{\HH}
 \pr\lec \|v^0- \tau_q v^1 \|_E + |q|\phi(\|v^1\|_E),}
while the right bound in \eqref{est on mob} is obvious by choosing $q=0$. 
Next we prove the quasi-triangle inequality. For any $v^0,v^1,v^2\in\HH$, there are $q^1,q^2\in\R^d$ such that 
\EQ{
 \mb(v^0,v^j) \simeq \|v^0- \tau_{q^j} v^j \|_\HH + |q^j|\phi(\min(\|v^0\|_\HH,\|v^j\|_\HH)), \pq(j=1,2),}
since the $\HH$ and  $E$ norms are equivalent and the $\HH$ norm is translation invariant. 
If $\|v^0\|_\HH\ll \min(\|v^1\|_\HH,\|v^2\|_\HH)$ then $\mb(v^0,v^j)\simeq\|v^j\|_\HH$ and so the quasi-triangle inequality is obvious. Otherwise, 
\EQ{
 \mb(v^1,v^2)\pt\lec \|v^1-v^2(\cdot-q^2+q^1)\|_\HH + |q^1-q^2|\phi(\min(\|v^1\|_\HH,\|v^2\|_\HH))
 \pr\lec \|v^1(\cdot-q^1)-v^2(\cdot-q^2)\|_\HH + \sum_{j=1,2}|q^j|\phi(\min(\|v^0\|_\HH,\|v^j\|_\HH))
 \pr\lec \mb(v^0,v^1) + \mb(v^0,v^2).}
To prove the completeness, let $v^n$ be Cauchy in $\mb$. Then so is $D^{-1}v^n$ in $\HH$ by \eqref{est on mob}. Hence, $v^n$ converges to some $v\in D\HH$. \eqref{est on mob} implies that $\|v^n\|_\HH$ converges. We may assume that this limit is positive, since otherwise the convergence to $0$ is obvious. Since $v^n$ is bounded in $\HH$, it converges weakly to $v$ in $\HH$. Passing to a subsequence, we may find $q_n\in\R^d$ such that 
\EQ{
 \|\tau_{q_n} v^n -v^{n+1}\|_\HH + |q_n| < 2^{-n}.}
Let $c_n=\sum_{k\ge n}q_n$, then $c_n\to 0$ and 
\EQ{
 \|v^n(\cdot-c_n)-v^{n+1}(\cdot-c_{n+1})\|_\HH < 2^{-n},}
which implies $\tau_{c_n}v^n\to v$ strongly in $\HH$, whence also  $v^n\to v$ strongly in $\HH$. 
\end{proof}

We apply the mobile distance only to the continuous spectrum part because, on the one hand, the discrete spectral part is finite dimensional and smooth, and on the other hand, the linearized energy is conserved only on the continuous spectrum. Choose positive constants $\de,C_0,C_1,C_2$ such that 
\EQ{
 0< C_2 \de \ll 1 \ll C_0 \ll C_1 \ll C_2.}
The required smallness of $C_2\de$, $1/C_0$, $C_0/C_1$ and $C_1/C_2$ is implicit in the following arguments, but only in terms of $d$, $f$, $Q$ and $\ka$. Henceforth, we shall regard those $C_j$ as being fixed constants and ignore the dependence on them unless it is important, while we regard $\de$ as a small parameter (with the smallness depending on $C_2$), keeping track of its impact on the estimates. 

The quasi-distance $\tm:\HH\times \HH \to[0,\I)$ is defined by 
\EQ{ \label{def tm}
 \pt \tm(v^0,v^1)^2:=\|P_d(v^0-v^1)\|_E^2 + \mathfrak{m}_{\phi_\de}(P_\ga v^0,P_\ga v^1)^2,}
where $\phi_\de(a):=\phi(a/\de)$ with a fixed $\phi\in C^\I(\R)$ satisfying 
\EQ{ \label{def phi}
 \phi(a)=\CAS{1 &(a\le C_2)\\ a &(a\ge 2C_2)},\pq 0\le \phi'\le 1.}
We will localize the equation for $v$ within distance $O(\de)$ from $0$, such that the evolution outside of it becomes purely linearized. 
We have chosen $\phi_\de$ such that the ``fare" is purely proportional to the translation distance within the nonlinear region, but there is an additional fee for ``excessive weight" over $O(\de)$. 
It is easy to see that $\tm$ has the same properties as $\mb$ in Proposition \ref{prop mb}. 

\section{Construction of manifolds for a localized equation}
In this section we construct a global center-unstable manifold for a equation of $v$ with localized nonlinearity around $0\in\HH$. The manifold obeys the original flow only on the subset $\HH_\perp$ in a small neighborhood. 
\subsection{Localization of the equation}
Let $\chi\in C_0^\I(\R)$ be a non-negative symmetric decreasing function satisfying $\chi(t)=1$ for $|t|\le 1$ and $\chi(t)=0$ for $|t|\ge 2$. Let 
\EQ{ \label{def chide}
 \chi_\de(v) := \chi(\|v\|_\HH^2/\de^2).}

We will construct a center-unstable manifold near $0$ for the equation of $v$ with the nonlinearity localized within $O(\de)$ distance from $0$
\EQ{ \label{eq v}
 \pt v_t = J\LL v + M_\de(v), \pq 
 \pq M_\de(v):=\chi_\de(v)[A(v)\cdot\na(\vec Q+ v) +\vec N(v)].}
Hence each component in the spectral decomposition solves 
\EQ{ \label{eq v in spec}
 \pt \p_t\la_{k\pm} = \pm k \la_{k\pm} + P_{\pm k} M_\de(v),
 \pr \p_t\mu = -\nu + P_\mu M_\de(v), 
 \pq \p_t\nu = P_\nu M_\de(v), 
 \pr \p_t\ga = J\LL \ga + P_\ga M_\de(v).}
\begin{lem} \label{lem:bd v}
The equation \eqref{eq v} is globally wellposed in $\HH$, and for any solution $v$, 
\EQ{ \label{inc est}
 \pt \sup_{|t|\le 1}\|v(t)\|_E \lec \|v(0)\|_E, 
 \pr \sup_{|t|\le 1}\|v_d(t)-e^{J\LL t}v_d(0)\|_E \lec \bde{\|v(0)\|_E}^2, 
 \pr \sup_{|t|\le 1}|\|\ga(t)\|_E^2-\|\ga(0)\|_E^2| \lec \bde{\|v(0)\|_E}^3.} 
\end{lem}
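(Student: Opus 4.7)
The plan is to combine local well-posedness of NLKG in the energy space with the cutoff $\chi_\de$ to obtain a global flow, then to extract the three estimates from a direct energy computation, handling the quasilinear transport $\chi_\de A(v)\cdot\na v$ by symmetric integration by parts rather than by $\HH$-valued Duhamel (indeed $M_\de(v)$ only lies in $L^2\times H^{-1}$, not in $\HH$). For well-posedness, $M_\de(v)$ is supported in $\{\|v\|_\HH\le\sqrt{2}\,\de\}$ and has coefficient $|A(v)|\lec \de^2$ there; the semilinear piece $\vec N$ is locally Lipschitz into $L^2$ by \eqref{asm f} combined with Sobolev/Strichartz for the free Klein--Gordon flow, and the quasilinear transport has spatially constant coefficient (hence skew-adjoint on $L^2$). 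Standard quasilinear theory then gives local well-posedness in $\HH$, and the uniform bound on $M_\de$ prevents finite-time blow-up.

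For the first line of \eqref{inc est}, compute $\p_t\|v\|_E^2$ component by component. Each discrete coordinate $(\la_{k\pm},\mu,\nu)$ contributes a linear term bounded by $\|v\|_E^2$ plus a projection $P_* M_\de(v)$ paired against a smooth eigenfunction, which is $\lec \|M_\de(v)\|_{L^2\times H^{-1}}\lec \bde{\|v\|_E}^2$. For $\ga$ one uses $\LR{\LL\ga|J\LL\ga}=0$ to get $\p_t\LR{\LL\ga|\ga}=2\LR{\LL\ga|P_\ga M_\de(v)}$, which is controlled by the argument below. Summing, $|\p_t\|v\|_E^2|\lec \|v\|_E^2$ so long as $\|v\|_E\lec\de$, so Gronwall yields $\|v(t)\|_E\lec\|v(0)\|_E$ for $|t|\le 1$, provided $C_2\de$ is small. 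For the second line, $v_d$ obeys the finite-dimensional ODE $\p_t v_d=J\LL v_d+P_d M_\de(v)$ on the discrete spectral subspace (since $J\LL$ leaves it invariant); its Duhamel formula is then literal, and $\|P_d M_\de(v)\|_E\lec \bde{\|v\|_E}^2$ combined with the first line gives $\|v_d(t)-e^{tJ\LL}v_d(0)\|_E\lec\bde{\|v(0)\|_E}^2$.

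For the third line, expand $\p_t\|\ga\|_E^2=2\LR{L_+\ga_1|(P_\ga M_\de)_1}+2\LR{\ga_2|(P_\ga M_\de)_2}$. The delicate terms are the quasilinear contributions $\LR{L_+\ga_1|A\cdot\na v_1}$ and $\LR{\ga_2|A\cdot\na v_2}$. Split $v=\ga+v_d$: the self-interaction pairings $\LR{L_+\ga_1|A\cdot\na\ga_1}$ and $\LR{\ga_2|A\cdot\na\ga_2}$ either vanish or drop to $\lec |A|\|\ga\|_E^2$ by integration by parts in $x$, using that $A(v)$ is spatially constant so any derivative on the coefficient can only land on the smooth factor $f'(Q)$ in $L_+$. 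The cross terms involving $\na v_d$ are $\lec |A|\|\ga\|_E\|v_d\|_E$ since $v_d$ lies in the smooth finite-dimensional eigenspace, and the $\vec N$-contribution $|\LR{\LL\ga|P_\ga\vec N(v)}|\lec \|\ga\|_E\|N(v)\|_{L^2}\lec \|\ga\|_E\|v\|_E^2$ is standard. Altogether $|\LR{\LL\ga|P_\ga M_\de(v)}|\lec \|\ga\|_E\,\bde{\|v\|_E}^2$; since $\|\ga\|_E\lec\bde{\|v\|_E}$ on $\supp\chi_\de$ and $\|v(s)\|_E\lec\|v(0)\|_E$ by the first line, integration over $|t|\le 1$ gives the desired $\bde{\|v(0)\|_E}^3$.

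The main obstacle throughout is the quasilinear transport $A(v)\cdot\na v$, which makes $M_\de$ non-$\HH$-valued and defeats naive Duhamel-in-$\HH$ estimates. The resolution in every step rests on $A(v)$ being a \emph{spatial} constant: the associated transport is skew-adjoint on $L^2$, so its self-interaction in the energy identity vanishes (or reduces to a manageable remainder), and it is absorbable as a small skew-adjoint perturbation in the local well-posedness argument.
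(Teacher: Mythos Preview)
Your energy-identity strategy for the three estimates is sound and in fact coincides with the paper's computation for the second and third lines of~\eqref{inc est}: the paper also uses Duhamel on the finite-dimensional $v_d$-block and computes $\p_t\LR{\LL\ga|\ga}$ directly, exploiting that $A(v)$ is spatially constant so the self-interaction of the transport term integrates by parts to a lower-order remainder involving $\na f'(Q)$.

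There is, however, a genuine gap for powers $p>d/(d-2)$. Your pointwise claim $\|N(v)\|_{L^2}\lec\|v\|_E^2$ fails in that range (it would require $H^1\hookrightarrow L^{2p}$), and ``standard quasilinear theory'' does not explain how to run a Strichartz-based contraction in the presence of the derivative-losing term $A(v)\cdot\na v$. Skew-adjointness of the transport on $L^2$ is enough for \emph{a priori} energy bounds, but not for existence via a Duhamel/Strichartz fixed point in~$\HH$. The paper resolves both issues simultaneously by passing to the moving frame: set $\dot c=\chi_\de(v)A(v)$, $c(0)=0$, and $w(t,x)=v(t,x-c(t))$. Then $(w,c)$ solves the genuinely semilinear system~\eqref{eq wc}--\eqref{def BF}, for which the free Klein--Gordon Strichartz estimate applies directly, yielding local well-posedness and $\|w\|_{\Str(-1,1)}\lec\|w(0)\|_\HH$. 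Since $\|v\|_\Str=\|w\|_\Str$, this gives the Strichartz control on $v$ needed to bound $\|N(v)\|_{L^1_tL^2_x}\lec\|v\|_\Str^2\lec\|v(0)\|_\HH^2$ in the third estimate. Your integration-by-parts argument then goes through after replacing the pointwise $\|N(v)\|_{L^2}$ bound by this time-integrated one.
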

\begin{proof}
Let $v$ be a local solution around $t=0$, and let $w(t,x)=v(t,x-c)$, where $c$ is the solution of 
\EQ{
 \dot c=\chi_\de(v)A(v),\pq c(0)=0.}
Let $\ta_c$ be the translation operator 
\EQ{ \label{def ta}
 \ta_c\fy(x)=\fy(x-c),}
then the equation for $(w,c)$ is given by
\EQ{ \label{eq wc}
 \pt \dot w = J\D w + F(w,c), \pq \dot c = B(w,c),}
with the nonlinear terms $F$ and $B$, defined by
\EQ{ \label{def BF}
  B(w,c) \pt:= \chi_\de(w)A_c(w), 
 \pq F(w,c) := \mat{B(w,c)\cdot\na Q_c \\ f'(Q_c)w_1 + \chi_\de(w)N_c(w)},}
where $A_c,N_c$ are translates of $A,N$: 
\EQ{ \label{def ANc}
 \pt A_c(w):=A(\ta_c^*w)=(H(Q)-\LR{\na Q_c|w_1})^{-1}\om(w,\na w)/2, 
 \pr N_c(w):=\ta_cN(\ta_c^*w)=f(Q_c+w_1)-f(Q_c)-f'(Q_c)w_1.}
Choosing some appropriate Strichartz norm, for example 
\EQ{ \label{def Str}
 \|w\|_\Str:= \CAS{ \|w\|_{L^\I_t \HH_x} + \|w_1\|_{L^{p}_{t} L^{2p}_x} &(d\ge 3),\\ \|w\|_{L^\I_t \HH_x} &(d\le 2),}}
with $p=\frac{d+2}{d-2}$, we have by the Strichartz estimate for the free Klein-Gordon equation 
\EQ{
 \pt \|w_1\|_{\Str(0,T)} \lec \|w(0)\|_\HH + \|F\|_{L^1_t\HH(0,T)},
 \pq \|c\|_{L^\I(0,T)} \lec |c(0)|+T\|B\|_{L^\I(0,T)},}
where the nonlinear terms are estimated by H\"older 
\EQ{ \label{est diff BF}
 \pt F(0,c)=0, \pq B(0,c)=0, 
 \pr |\diff B(w^\pa,c^\pa)| \lec [|\diff c^\pa| \bde{\|w^\pa\|_\HH}^2 + \bde{\|\diff w^\pa\|_\HH}] \bde{\|w^\pa\|_\HH},
 \pr \|\diff F(w^\pa,c^\pa)\|_{L^1_t\HH(0,T)} \lec T[\|\diff c^\pa\|_{L^\I} \|w^\pa\|_{\Str}  + \|\diff w^\pa\|_{\Str}]
 \pr\hspace{120pt}+ [\|\diff c^\pa\|_{L^\I} + \|\diff w^\pa\|_{\Str}]\bde{\|w^\pa\|_{\Str}} ,}
on the time interval $0<t<T\ll 1$. 
Hence if $\de,T>0$ are small enough, we obtain a local solution of $(w,c)$ on $(0,T)$ in $\HH\times\R^d$ by the standard iteration. By Gronwall, it is extended to any finite time intervals. In particular, 
\EQ{ \label{bd wc}
 \pt \|w_{1}\|_{\Str(-1,1)} \lec \|w(0)\|_\HH, \pq |c|_{L^\I(-1,1)} \lec |c(0)| + \bde{\|w(0)\|_\HH}^2,}
and moreover, if $\|w(0)\|_\HH\gg\de$ then $\|w(t)\|_\HH\gg\de$ for $|t|\le 1$ and so, $c(t)=0$ and $F=(0,f'(Q)w_1)$. Hence we obtain by the usual iteration and continuation argument, 
\EQ{
 \pt \|\diff w^\pa\|_{\Str(-1,1)} \lec \|\diff w^\pa(0)\|_\HH,
 \pr \|\diff c^\pa\|_{L^\I(-1,1)} \lec \bde{\|\diff w^\pa(0)\|_\HH} \bde{\|w^\pa(0)\|_\HH},
 \pr \|\diff F(w^\pa,c^\pa)\|_{L^1_t\HH_x(-1,1)} \lec \|\diff w^\pa(0)\|_\HH. }

Next we prove the second and third estimates in \eqref{inc est}. Since they are now obvious for $\|v(0)\|_\HH\ge C_0\de$, we may assume that $\|v(0)\|_\HH\le C_0\de$. For the $v_d$ part, we have by the energy inequality 
\EQ{ \label{est diffree}
 \|v_d-e^{J\LL t}v_d(0)\|_{L^\I_t E(0,T)} \le \|D^{-2}M_\de(v)\|_{L^1_t \HH_x(0,T)} \lec \|v\|_{\Str(0,T)}^2 \lec \|v(0)\|_\HH^2.}
For the $\ga$ part, we have 
\EQ{
 \p_t\LR{\LL\ga|\ga}=\chi_\de(v)[A(v)\LR{f(Q)\ga_1|\na\ga_1}+A(v)\LR{\LL\na v_d|\ga} + \LR{N(v)|\ga_2}],}
and so 
\EQ{
 [\|\ga\|_E^2]_0^T \pt\lec \|(A(v)f'(Q)\ga_1,A(v)D^2 v_d,N(v))\|_{L^1_tL^2_x(0,T)}\|\ga\|_{L^\I_t\HH(0,T)} 
 \pr\lec \|v(0)\|_\HH^3}
 and we are done. 
\end{proof}

Denote the nonlinear propagator for  equation \eqref{eq v} on $\HH$ by 
\EQ{ \label{def U}
 U(t):\HH \to \HH, \pq U(t)v(0)=v(t).}

\subsection{Smallness of nonlinearity}

The following estimate on the nonlinear term by the mobile distance will be the basis of all the succeeding arguments.
\begin{lem} \label{lem:diff v}
For any two solutions $v^j(t)=U(t)v^j(0)\in C(\R;\HH)$ of \eqref{eq v}, we have 
\EQ{
 \pt \sup_{|t|\le 1}\tm v^\pa (t) \lec \tm v^\pa (0),
 \pr \sup_{|t|\le 1}\|P_d[\diff v^\pa(t)-e^{J\LL t}\diff v^\pa(0)]\|_E + \left|[\tm P_\ga v^\pa ]_0^t\right| 
 \pn\lec \de\tm v^\pa (0),}
where the implicit constants are determined by $d$, $f$, $Q$ and $\ka$. 
\end{lem}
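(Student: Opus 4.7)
Given two solutions $v^0,v^1$ of \eqref{eq v}, apply the change of variables $w^j(t,x)=v^j(t,x-c^j(t))$ from the proof of Lemma~\ref{lem:bd v}, where $c^j(0)=0$ and $\dot c^j=\chi_\de(v^j)A(v^j)$, so that $w^j$ solves $\dot w^j=J\D w^j+F(w^j,c^j)$ and satisfies the Strichartz bounds \eqref{bd wc}. By symmetry in the definition of $\mathfrak{m}_{\phi_\de}$, choose $q_0\in\R^d$ nearly attaining the infimum for $\mathfrak{m}_{\phi_\de}(P_\ga v^0(0),P_\ga v^1(0))$, with $j=0$. As trial translation at time $t$ we shall use
\[
 q(t):=q_0+c^1(t)-c^0(t),
\]
chosen precisely so that $v^0(t)-\ta_{q(t)}v^1(t)=\ta_{-c^0(t)}[\,w^0(t)-\ta_{q_0}w^1(t)\,]$. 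This reduces both claimed inequalities to a bound on $\Phi(t):=w^0(t)-\ta_{q_0}w^1(t)$ in the translation-invariant $\HH$-norm, together with bookkeeping of the penalty $|q(t)|\phi_\de(\|P_\ga v^j\|_E)$.

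\textbf{Energy estimate in the moving frame.} The function $\Phi$ satisfies $\dot\Phi=J\D\Phi+F(w^0,c^0)-\ta_{q_0}F(w^1,c^1)$. The bracketed difference decomposes as (i) a translation-equivariant remainder whose Strichartz norm is bounded by $O(\de)\|\Phi\|_\Str$ thanks to the localizing cutoff $\chi_\de$ and the estimates \eqref{est diff BF}; plus (ii) a center-mismatch piece arising because $f'(Q_{c^0})$ and $\ta_{q_0}f'(Q_{c^1})=f'(Q_{c^1+q_0})$ differ by $\lec |c^0-c^1-q_0|\,\|f''(Q)\na Q\|_{L^\I}$, with an analogous estimate for the $B\na Q_c$ piece. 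A translation-aware refinement of \eqref{est diff BF}, using that $\om(\cdot,\na\cdot)$ is translation invariant, gives $|\dot c^0-\dot c^1|\lec \de\,\tm v^\pa$, and hence $|c^0(t)-c^1(t)|\lec \de\,\tm v^\pa(0)$ on $|t|\le 1$, so (ii) is of size $\de(|q_0|+\tm v^\pa(0))$. The Strichartz estimate for the free Klein--Gordon equation combined with Gronwall yields
\[
 \|\Phi\|_{L^\I_t\HH(-1,1)}\lec \|\Phi(0)\|_\HH+\de\,\tm v^\pa(0).
\]
The penalty $|q(t)|\phi_\de(\|P_\ga v^j(t)\|_E)$ is bounded by $|q_0|\phi_\de(\|P_\ga v^j(0)\|_E)+O(\de)\tm v^\pa(0)$ via $|q(t)-q_0|=|c^0(t)-c^1(t)|$, the monotonicity of $\phi_\de$, and \eqref{inc est}. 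This supplies the $\ga$-portion of both stated inequalities.

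\textbf{Discrete part and main obstacle.} For the finite-dimensional projection, Duhamel gives
\[
 P_d\diff v^\pa(t)=e^{J\LL t}P_d\diff v^\pa(0)+\int_0^t e^{J\LL(t-s)}P_d\bigl[M_\de(v^1)-M_\de(v^0)\bigr](s)\,ds,
\]
so it suffices to establish the pointwise bound $\|P_d[M_\de(v^1)-M_\de(v^0)]\|_E\lec \de\,\tm v^\pa$. Splitting $v^1-v^0=(v^1-\ta_{q(\cdot)}v^0)+(\ta_{q(\cdot)}v^0-v^0)$, the first piece is handled by the Lipschitz norm of $M_\de$ in $\HH$, which is $O(\de)$ on $\supp\chi_\de$; the second piece, after pairing against the exponentially localized eigenfunctions $g_{k\mp},\na\vec Q,J\na\vec Q$ defining $P_d$, produces at most $\de\,|q(\cdot)|\,\|v^0\|_\HH$ by a mean-value argument in the translation variable. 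Both are absorbed into $\de\,\tm v^\pa$ through \eqref{est on mob}, and a Gronwall bootstrap against the moving-frame estimate closes both inequalities. The main technical obstacle throughout is that $J\LL$, $P_\ga$, and $M_\de$ all fail to commute with translations owing to the $Q$-dependence; this is overcome by the a priori choice of $q(t)$ above, which aligns the comparison with the modulation drift $c^1-c^0$ and forces every non-equivariant term to carry a factor of $\de$ or of $\tm v^\pa$.
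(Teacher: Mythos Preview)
Your overall architecture is close to the paper's (moving frame, trial translation tracking the modulation drift), and your treatment of the discrete component is essentially the paper's estimate $\|\diff M_\de(v^\pa)\|_{H^{-2}}\lec\de\,\tm v^\pa$ in disguise --- though you should say that the target space is $H^{-2}$ (or that you pair against the Schwartz eigenfunctions of $P_d$), since $M_\de$ contains the unbounded term $A(v)\na v$ and is certainly not Lipschitz $\HH\to\HH$.

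The genuine gap is in the $\ga$-part. Your Strichartz/Gronwall argument gives
\[
 \|\Phi(t)\|_\HH \le C\,\|\Phi(0)\|_\HH + C\de\,\tm v^\pa(0)
\]
with some absolute $C>1$. That suffices for the first inequality $\tm v^\pa(t)\lec\tm v^\pa(0)$, but it \emph{cannot} yield the sharp increment bound $|[\tm P_\ga v^\pa]_0^t|\lec\de\,\tm v^\pa(0)$: the latter asserts that the change of $\tm P_\ga v^\pa$ is at most $O(\de)\cdot\tm v^\pa(0)$, with implied leading coefficient~$1$, not~$C$. Time-reversing a Strichartz bound only gives $\tm P_\ga v^\pa(t)\ge C^{-1}\tm P_\ga v^\pa(0)-C'\de\,\tm v^\pa(0)$, which is off by the factor $C^{-1}-1$. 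The paper closes this by an \emph{energy identity}: with $\z^j:=\ta_{c^j}\ga^j$ (note: only the $\ga$ component is translated, and $c^1(0)$ is set equal to the optimal $q(0)$), one computes
\[
 \p_t\LR{\LL^0\diff\z^\pa|\diff\z^\pa}=-\LR{\dot c^0\cdot\na f'(Q_{c^0})\diff\z^\pa_1|\diff\z^\pa_1}+2\LR{\LL^0\diff\z^\pa|P_\ga^0\diff M^\pa+R},
\]
and the right-hand side is bounded by $C\de^2\,\tm v^\pa(0)^2$ after invoking the rough Strichartz bound. Integrating gives $|[\LR{\LL^0\diff\z^\pa|P_\ga^0\diff\z^\pa}]_0^t|\lec\de^2\,\tm v^\pa(0)^2$, and together with $|[\diff c^\pa]_0^t|\lec\de\,\tm v^\pa(0)$ this produces the two-sided increment bound. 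In short: Strichartz is needed to control the forcing, but the conservation structure of $\LR{\LL\cdot|\cdot}$ is what delivers the sharp constant.

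A smaller point: your $\Phi=w^0-\ta_{q_0}w^1$ translates the \emph{full} solution, whereas $\tm$ applies the translation only to $P_\ga$. The commutators $[P_\ga,\ta_q]$ and $[P_d,\ta_q]$ then have to be accounted for; the paper handles these explicitly (see the terms $\|P_d\ga^1(\cdot+\diff c^\pa)\|_E^2$ and $\|\diff P_d^\pa\z^1\|_2^2$), each contributing $O(|\diff c^\pa|^2\de^2)$. You should make these commutator corrections visible rather than absorbing them into the vague phrase ``forces every non-equivariant term to carry a factor of~$\de$.''
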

\begin{proof}
Without loss of generality, we may assume that for some $q(t)\in\R^d$ 
\EQ{ \label{def q}
 \tm v^\pa (t)^2 \pt= \|P_d \diff v^\pa (t)\|_E^2 + \|\ga^0(t)-\ta_{q(t)}\ga^1(t)\|_E^2
 +|q(t)|^2\phi_\de(\|\ga^1(t)\|_E)^2}
for $|t|\le 1$. Decompose each solution by
\EQ{
 v^j = \sum_{\pm, k\in K}\la_{k\pm}^jg_{k\pm} + \mu^j\cdot\na\vec Q + \nu^j\cdot J\na\vec Q+ \ga^j.}

{\bf (I) Case $\|v^\pa (0)\|_2\le C_1\de$:} 
The previous lemma implies that $\|v^\pa (t)\|_2 \ll C_2\de$, and so $\phi_\de(\|\ga^j(t)\|_E)=1$, for $|t|\le 1$ and $j=0,1$. The discrete components solve
\EQ{
 \pt \p_t\diff\la_{k\pm}^\pa = \pm k \diff\la_{k\pm}^\pa + P_{\pm k}\diff{M_\de(v^\pa)},
 \pr \p_t\diff\mu^\pa = -\diff\nu^\pa + P_\mu\diff{M_\de(v^\pa)}, 
 \pq \p_t\diff\nu^\pa = P_\nu\diff{M_\de(v^\pa)},}
where the nonlinear term is bounded by
\EQ{ \label{est diffM}
 \|\diff{M_\de(v^\pa)}\|_{H^{-2}} \lec \de\tm v^\pa,}
which is proved as follows: \eqref{est on mob} and the translation invariance of $\om(v,\na v)$ imply 
\EQ{ \label{est diff H-2}
 \pt |\diff\chi_\de^\pa(v)|\lec |\diff \|v^\pa\|_2|/\de \lec \tm v^\pa /\de, 
 \pr |\diff A(v^\pa)| \lec \de[\|D^{-1}\diff v^\pa\|_{\HH}+\|\ga^0-\ta_q\ga^1\|_{\HH}  ] \lec \de\tm v^\pa ,
 \pr \|\diff\na v^\pa\|_{H^{-2}} \lec \|D^{-1}\diff v^\pa\|_{\HH} \lec \tm v^\pa .}
The nonlinear part is estimated by using Sobolev
\EQ{
 \|\diff N(v^\pa)\|_{H^{-2}} 
 \pt\lec \|N(v^0)-\ta_qN(v^1)\|_{L^\vr}+ \int_0^1  \|\p_\te \ta_{\te q}N(v^1)\|_{H^{-1}_\vr} d\te
 \pr\lec \de\|v^0_1-\ta_qv^1_1\|_{H^1} + |q|\|v^1_1\|_{H^1}^2
 \pn\lec \de\tm v^\pa,}
where $\vr:=\min(2,1+1/p)$ for $d\ge 2$ and $\vr:=1$ for $d=1$. 
Thus we obtain \eqref{est diffM}. Therefore, we have for $|t|\le 1$, 
\EQ{ \label{diff d err est}
 \|\diff v_d^\pa(t)-e^{J\LL t}\diff v_d^\pa(0)\|_E \lec \Bigl|\int_0^t \de\tm v^\pa(s) ds\Bigr|.}
The linearized solution enjoys the obvious bound 
\EQ{ \label{diff d free est}
 \|e^{J\LL t}\diff v_d^\pa(0)\|_E \lec e^{\ok t}\|\diff v_d^\pa(0)\|_E.}

For the difference in the $\ga$ component, we need the mobile distance. Let 
\EQ{ \label{def zej}
 \ta^j:=\ta_{c^j}, \pq 
 \z^j:=\ta^j\ga^j, \pq Q^j:=\ta^jQ=Q(\cdot-c^j),} 
for $j=0,1$, where $c^j(t)\in\R^d$ are the solutions of 
\EQ{
 \dot c^j=\chi_\de(v^j)A(v^j), \pq  c^0(0)=0, \pq c^1(0)=q(0),}
where $q$ has been chosen in \eqref{def q}. Then we have 
\EQ{
 \dot \z^j = J\LL^j\z^j + \dot c^j\cdot(P_\ga^j\na P_d^j \z^j - P_d^j\na \z^j)+P_\ga^j(0,M^j),}
where $\LL^j$, $P_d^j$ and $P_\ga^j$ are linear operators, and $M^j$ is the nonlinear part, defined by 
\EQ{
 \pt \LL^j:=\ta^j \LL(\ta^j)^*, \pq P_\star^j:=\ta^j P_\star(\ta^j)^* ,  
 \pq M^j:= \chi_\de(v^j)N_{c^j}(z^j),}
and $z^j:=\ta^jv_d^j+\z^j$. Hence the difference satisfies 
\EQ{
 \pt \diff{\dot \z^\pa} = J\LL^0 \diff\z^\pa + P_\ga^0(0,\diff M^\pa) + R,
 \pq \|R\|_\HH \lec \de[\tm v^\pa + |\diff c^\pa| + \|\diff \z^\pa\|_2],}
using \eqref{est diff H-2} as well as $\|v^j\|_\HH\lec\de$. 
The Strichartz estimate for the free Klein-Gordon equation yields (regarding $J(\LL^0-\D)\diff\z^\pa$ as a perturbation, which can
be done by partitioning the time-interval)
\EQ{ \label{est diff z hD}
 \pt \|\diff{\z^\pa}\|_{\Str(0,1)} \lec \|\diff\z^\pa(0)\|_2 + \|\diff M^\pa\|_{L^1_tL^2_x(0,1)}+\|R\|_{L^\I_t\HH_x(0,1)},}
where the nonlinear part is estimated by applying H\"older to the second order Taylor expansion of $f$:
\EQ{
 \pt f(Q^j+z^j_1)-f(Q^j)-f'(Q^j)z^j_1
 =\int_0^1\int_0^1[f''(Q^j+\al\te z^j_1)\te(z^j_1)^2]\,d\al d\te,}
and
\EQ{
 \pt\diff[f(Q^\pa+z^\pa_1)-f(Q^\pa)-f'(Q^\pa)z^\pa_1]
 \pr=\int_0^1\int_0^1[f''(Q^\al+\te z^\al_1)\diff(Q^\pa+\te z^\pa_1)+f''(Q^0+\al\te z^0_1)\te z^0_1\diff z^\pa_1]\,d\al d\te,}
where $Q^\al:=(1-\al)Q^0+\al Q^1$ and $z^\al:=(1-\al)z^0+\al z^1$. Thus we get 
\EQ{ \label{diff M est}
 \|\diff M^\pa\|_{L^1_tL^2_x(0,1)} \pt\lec (1+\|z^\pa\|_{\Str(0,1)})^{p-2}\|z^\pa\|_{\Str(0,1)}
 [\|\diff c^\pa\|_{L^\I_t(0,1)}+\|\diff z^\pa\|_{\Str(0,1)}].}
On the other hand, we have 
\EQ{ \label{diff c mov est}
 \pt \|\diff z^\pa\|_{\Str(0,1)} \lec \|\diff c^\pa\|_{L^\I(0,1)}+\|\diff v_d^\pa\|_{L^\I\HH(0,1)}+\|\diff\z^\pa\|_{\Str(0,1)}, 
 \pr  \tm v^\pa \lec \|\diff v_d^\pa\|_\HH + \|\diff\z^\pa\|_\HH + |\diff c^\pa|.}
Combining these estimates with \eqref{est diff z hD}--\eqref{diff M est}, \eqref{diff d err est} and \eqref{diff d free est}, we obtain
\EQ{ \label{mov bd}
 \sup_{0\le t\le 1}\tm v^\pa \lec \|\diff c^\pa\|_{C^1_t(0,1)}+\|\diff\z^\pa\|_{\Str(0,1)}+\|\diff v_d^\pa\|_{\Str(0,1)} \lec \tm v^\pa(0).}

For the sharper estimate on the $\ga$ part, we use 
\EQ{ \label{inc mov}
 \pt (\tm \ga^\pa)^2 \le \|(\ta^0)^*\diff\z^\pa\|_E^2 + |\diff c^\pa|^2,
 \pr \|(\ta^0)^*\diff\z^\pa\|_E^2 = \LR{\LL^0 \diff\z^\pa|P_\ga^0\diff\z^\pa}+\|P_d\ga^1(x+\diff c^\pa)\|_E^2,}
with equality at $t=0$. For the distance term, we have from \eqref{diff c mov est} and \eqref{mov bd} 
\EQ{ \label{inc c bd}
 |\diff c^\pa(t)| \le |\diff c^\pa(0)|+C\de\tm v^\pa(0).}
For the translated part, we have 
\EQ{
 \pt |\LR{\LL^0 \diff\z^\pa|P_\ga^0\diff\z^\pa}-\LR{\LL^0\diff\z^\pa|\diff\z^\pa}| \lec\|\diff P_d^\pa \z^1\|_2^2 \lec|\diff c^\pa|^2\de^2,
 \pr  \|P_d\ga^1(x+\diff c^\pa)\|_2^2 \lec |\diff c^\pa|^2\de^2,}
and 
\EQ{
 \p_t\LR{\LL^0\diff\z^\pa|\diff\z^\pa}=-\LR{f'(Q_{c^0})\dot c^0\cdot\na Q_{c^0}\diff\z^\pa|\diff\z^\pa}+2\LR{\LL^0\diff\z^\pa|P_\ga^0\diff M^\pa+R}. }
Hence for $|t|\le 1$, using \eqref{mov bd} and \eqref{diff M est} as well, 
\EQ{ 
 \pt\left|[\LR{\LL^0\diff\z^\pa|P_\ga^0\diff\z^\pa}]_0^t\right|
 \pr\lec |\diff c^\pa|^2\de^2+\de^2\|\diff\z^\pa\|_{L^\I_t \HH}^2+\|\diff\z^\pa\|_{L^\I_t \HH}\|P_\ga^0\diff M^\pa+R\|_{L^1_t \HH}
 \pn\lec \de^2(\tm v^\pa(0))^2.}
Plugging this and \eqref{inc c bd} into \eqref{inc mov}, we obtain the desired upper estimate on the $\ga$ part. For the lower estimate
one reverses time,
completing the proof in the case (I).

{\bf (II) Case $\min_j\|v^j(0)\|_2\ge C_0\de$}: The previous lemma implies that $\|v^j(t)\|_\HH\gec C_0\de\gg \de$, and so, for $|t|\le 1$ and $j=0,1$,  
\EQ{
 \pt v^j(t)=e^{J\LL t}v^j(0), \pq \|\ga^j(t)\|_E=\|\ga^j(0)\|_E, 
 \pq \|P_d\diff v^\pa(t)\|_E\lec\|P_d\diff v^\pa(0)\|_E.}
To estimate the $\ga$ part, let $\z(t):=\ga^0(t)-\ga^1(t,x-q(0))$. Then 
\EQ{
 \dot\z\pt-J\LL\z = (0, [f'(Q)-f'(Q_{q(0)})]\ta_{q(0)}\ga_1^1),}
where the right-hand side is bounded in $\HH$ by 
\EQ{
 |q(0)|\|\ga^1(0)\|_\HH \lec \de\tm v^\pa(0),}
where we used that $a\lec\de\phi_\de(a)$. 
Hence using the energy inequality for $\LL$, we get 
\EQ{
 \pt \left|[\LR{\LL\z|\z}]_0^t\right| \lec \de\tm v^\pa(0) \|\z\|_{L^\I_t\HH_x}, 
 \pr \|P_d\z(t)\|_E = \|P_d\ta_{q(0)}\ga^1\|_E \lec |q(0)|\|\ga^1(0)\|_\HH \lec \de\tm v^\pa(0),}
and so 
\EQ{\label{3 48}
 (\tm \ga^\pa(t))^2 \pt\le \|\z(t)\|_E^2+|q(0)|^2\phi_\de(\|\ga^1(0)\|_E)^2 
 \pr\le (\tm \ga^\pa(0))^2+C\de^2(\tm v^\pa(0))^2.}
This completes the proof in the case (II). 

{\bf (III) Case $\|v^0(0)\|_\HH>C_1\de\gg C_0\de>\|v^1(0)\|_\HH$}: The previous lemma implies that $\|v^0(t)\|_\HH\gec C_1\de \gg C_0\de\gec \|v^1(t)\|_\HH$ for $|t|\le 1$, and so by \eqref{est on mob}, 
\EQ{
 \tm v^\pa(t) \simeq \|v^0(t)\|_\HH \simeq \|v^0(0)\|_\HH \simeq \tm v^\pa(0) \gec \de.}
For the difference from the linearized solution, \eqref{est diffree} yields the desired estimate. 
The estimate on the increment of the $\ga$ part is similar to the case (I), but now $\z^0=\ga^0$ evolves linearly, which means that the nonlinear terms in $\diff\z^\pa$ depends only on $\z^1$. Hence \eqref{diff M est} is replaced with 
\EQ{
 \|\diff M^\pa\|_{L^1_tL^2_x}=\|M^1\|_{L^1_tL^2_x} \lec \|v^1\|_{\Str}^2 \lec \de^2.}
Noting that $\de\lec\tm(v^0(0),v^1(0))$, the rest of the argument goes through as in case~(I) above. 

{\bf (IV) Case $\|v^0(0)\|<C_0\de\ll C_1\de<\|v^1(0)\|_\HH$}: Although this is symmetric with the previous case, we have to check the mobile distance part, 
since there we introduced asymmetry with~\eqref{def q}. The difference appears in \eqref{inc mov}: 
\EQ{
 (\tm v^\pa)^2 \le \|(\ta^0)^*\diff\z^\pa\|_E^2 + |\diff c^\pa|^2\phi_\de(\|\ga^1\|_E)^2.}
However this is admissible, because 
\EQ{
 [|\diff c^\pa|\phi_\de(\|\ga^1\|_E)]_0^t \lec \de^2\phi_\de(\|v^1(0)\|_E) \simeq \de\tm v^\pa(0),}
and the remaining argument is the same as in the previous case. 
\end{proof}

\subsection{Evolution of graphs, center-stable case}
Now we consider the graphs of $v_{\ge 0}\mapsto v_-$ satisfying a Lipschitz condition. It is convenient to extend them to the whole $\HH$. Our class of graphs for the contraction argument is given by 
\begin{multline} \label{def Gelde}
 \G_{\ell,\de}:=\{G: \HH \to P_-\HH \mid G=G\circ P_{\ge 0},\ G(0)=0,\ \|\diff G(v^\pa)\|_E \le \ell\tm v^\pa\}
\end{multline}
for small $\ell>0$, and the graph of $G\in\G_{\ell,\de}$ is denoted by 
\EQ{ \label{def graph}
 \gr{G} := \{\fy\in\HH \mid P_-\fy=G(\fy)\}.} 
A center-unstable manifold will be found as the unique invariant graph, by the contraction mapping principle in $\G_{\ell,\de}$. 

For $p>d/(d-2)$, the Sobolev inequality does not imply that $\dot v$ is bounded in $L^2_x$, and consequently we can not prove strict invariance of $\G_{\ell,\de}$ for $t>0$, but the ``almost invariance" given below is sufficient for the contraction argument. 
\begin{lem} \label{lem:diff inv}
There exists $C_L\ge 1$ such that if $\ell,\de>0$ satisfy 
\EQ{ \label{cond el-de}
 \ok \ell^2 + \de \ll \uk, \pq \de \ll \ell \uk,}
then for any two solutions $v^j(t)=U(t)v^j(0)$ ($j=0,1$) satisfying 
\EQ{
 \|\diff v_-^\pa(0)\|_E \le \ell\tm v^\pa(0),}
one has 
\EQ{ \label{diff est}
 \|\diff v_-^\pa(t)\|_E \le \CAS{C_L \ell\tm v^\pa(t) &(|t|\le 1),\\ 
  \ell\tm v^\pa(t) &(1/2\le t\le 1).}}
\end{lem}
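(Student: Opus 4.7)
The plan is to reduce the estimate to the linearized flow via Lemma~\ref{lem:diff v} and balance the exponential contraction of the $v_-$ modes against a sharp lower bound on $\tm v^\pa(t)$. Since $P_-$ sits inside $P_d$ and the $E$-norm is orthogonal over the spectral decomposition, Lemma~\ref{lem:diff v} gives
\[
 \|\diff v_-^\pa(t)-e^{J\LL t}\diff v_-^\pa(0)\|_E \;\le\; \|P_d[\diff v^\pa(t)-e^{J\LL t}\diff v^\pa(0)]\|_E \;\lec\; \de\,\tm v^\pa(0).
\]
On $P_-\HH$ the linearized propagator acts diagonally as $e^{-kt}$ with $k\in K\subset[\uk,\ok]$, so $\|e^{J\LL t}|_{P_-\HH}\|_{E\to E}\le e^{-\uk t}$ for $t\ge 0$ and $\le e^{\ok|t|}$ for $t\le 0$. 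Combined with the graph hypothesis $\|\diff v_-^\pa(0)\|_E\le \ell\,\tm v^\pa(0)$, this yields $\|\diff v_-^\pa(t)\|_E\le (e^{-\uk/2}\ell+C\de)\tm v^\pa(0)$ on $[1/2,1]$, and $\|\diff v_-^\pa(t)\|_E\le (e^{\ok}\ell+C\de)\tm v^\pa(0)$ on $[-1,1]$.

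Next I would produce the matching lower bound on $\tm v^\pa(t)$. Using the orthogonal splitting
\[
 \tm v^\pa(t)^2 \;=\; \|\diff v_+^\pa(t)\|_E^2+\|\diff v_0^\pa(t)\|_E^2+\|\diff v_-^\pa(t)\|_E^2+(\tm P_\ga v^\pa(t))^2
\]
and applying Lemma~\ref{lem:diff v} block by block: the linearized flow grows $\diff v_+$ by at least $e^{\uk t}$, preserves $\diff v_0$ up to a factor $1-O(\ka)$ arising from the Jordan coupling $\mu\mapsto\mu-t\nu$ in the $\ka$-weighted $E$-norm, and preserves $\tm P_\ga v^\pa$ up to $O(\de)\tm v^\pa(0)$. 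Only the $\diff v_-$ block truly contracts, but the graph hypothesis caps its initial size at $\ell\,\tm v^\pa(0)$, so the loss from that block is at most $\ell^2\tm v^\pa(0)^2$. Summing and completing squares I expect
\[
 \tm v^\pa(t)^2 \;\ge\; \bigl(1-\ell^2-C(\ka+\de)\bigr)\,\tm v^\pa(0)^2, \qquad t\in[1/2,1],
\]
with a two-sided comparison $\tm v^\pa(0)\simeq \tm v^\pa(t)$ on $|t|\le 1$ (with a universal constant) obtained from a time-reversed application of the same lemma.

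Combining the two bounds, for $t\in[1/2,1]$,
\[
 \|\diff v_-^\pa(t)\|_E \;\le\; \frac{e^{-\uk/2}\ell+C\de}{\sqrt{1-\ell^2-C(\ka+\de)}}\,\tm v^\pa(t),
\]
and since $1-e^{-\uk/2}\gec\uk$ for small $\uk$, the smallness hypotheses $\ok\ell^2+\de\ll\uk$ and $\de\ll\ell\uk$ are tuned precisely so that the right-hand coefficient drops strictly below $\ell$ after squaring and absorbing the quadratic remainder $\ell^4$. For the symmetric interval $|t|\le 1$ the same computation with $e^{\ok|t|}\le e^{\ok}$ in place of $e^{-\uk/2}$ yields the bound with $C_L$ taken large enough (depending on $\ok$, $\uk$, and the absolute constants of Lemma~\ref{lem:diff v}) to absorb both the expansion factor and the universal ratio $\tm v^\pa(0)/\tm v^\pa(t)$.

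The main obstacle is the lower bound on $\tm v^\pa(t)$: it must be sharp to order $\ell^2+\de+\ka$ in order to fit inside the spectral-gap contraction margin $1-e^{-\uk/2}\simeq\uk/2$. This is exactly where the graph Lipschitz condition (controlling the only contracting block), the relative smallness $\ka\ll\uk$ (controlling the non-unitary center evolution), and the precise form of \eqref{cond el-de} all enter in a finely balanced way; the rest of the proof is bookkeeping.
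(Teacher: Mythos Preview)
Your argument is correct and follows essentially the same route as the paper: bound $\|\diff v_-^\pa(t)\|_E$ from above via Lemma~\ref{lem:diff v} and the diagonal action $e^{-kt}$ on $P_-\HH$, bound $\tm v^\pa(t)$ from below by splitting off the contracting $P_-$ block (whose initial mass is capped by $\ell^2\tm v^\pa(0)^2$) and controlling the remaining blocks by $e^{-\ka|t|}$ and the $\ga$-increment, then divide. The paper carries the explicit exponentials $e^{-2\ka t}(1-\ell^2)+e^{-2\ok t}\ell^2$ through to the end, whereas you Taylor-expand to $1-\ell^2-C(\ka+\de)$; both are adequate under $\ka\ll\uk$ and \eqref{cond el-de}.
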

\begin{proof}
The linearized solutions of the discrete modes are estimated by 
\EQ{ \label{est linearized}
 \pt \min(e^{\pm\uk t},e^{\pm\ok t})\|P_\pm \fy\|_E \le \|P_\pm e^{J\LL t}\fy\|_E 
  \le \max(e^{\pm\uk t},e^{\pm\ok t})\|P_\pm\fy\|_E,
 \pr e^{-\ka|t|}\|P_0\fy\|_E \le \|P_0e^{J\LL t}\fy\|_E \le e^{\ka|t|}\|P_0\fy\|_E.}
The previous lemma implies that 
\EQ{ \label{est on diff v-}
 \|\diff v_-^\pa(t)\|_E \pt\le \|\diff e^{J\LL t}v_-^\pa(0)\|_E+C\de\tm v^\pa(0)
      \pr\le [\max(e^{-\uk t},e^{-\ok t})\ell+C\de]\tm v^\pa(0),}
and also,  
\EQ{
 \tm v^\pa(t)^2 \ge \|e^{J\LL t}\diff v_d^\pa(0)\|_E^2 + \tm \ga^\pa(0)^2 - C\de^{2}\tm v^\pa(0)^2.}
Plugging \eqref{est linearized} into the last estimate, we obtain
\EQ{
 \tm v^\pa(t)^2 \ge \CAS{[e^{-2\ka t}(1-\ell^2)+e^{-2\ok t}\ell^2 - C\de^{2}]\tm v^\pa(0)^2 &(0\le t\le 1),\\ [e^{-2\ok|t|}-C\de^{2}]\tm v^\pa(0)^2 &(|t|\le 1).}}
Combining it with \eqref{est on diff v-} yields for $|t|\le 1$,
\EQ{
 \|\diff v^\pa_{-}(t)\|_E \le (e^{\ok}\ell +C\de)(e^{\ok}+C\de)\tm v^\pa(t)
 \le 2\ell e^{2\ok}\tm v^\pa(t),}
provided that $\de\ll\ell\ll 1$. For $1/2\le t\le 1$, we obtain
\EQ{
 \|\diff v^\pa_{-}(t)\|_E \pt\le (e^{-\uk/2}\ell+C\de)(e^{-2\ka}(1-\ell^2)+e^{-2\ok}\ell^2-C\de^{2})^{-1/2}\tm v^\pa
 \pr\le(1-\uk/3+C\de/\ell)\ell[1+C(\de+\ka+\ell^2\uk)]\tm v^\pa \le \ell \tm v^\pa,}
under the condition \eqref{cond el-de} and $\ka\ll\uk\le 1$. 
\end{proof}

As an immediate consequence of the above lemma together with a mapping degree argument, we obtain the following result. 
\begin{lem} \label{lem:U on graph}
Under the condition \eqref{cond el-de}, $U(t)$ for $|t|\le 1$ defines a map $\U(t):\G_{\ell,\de}\to\G_{C_L\ell,\de}$ uniquely by the relation $U(t)\gr{G}=\gr{\U(t)G}$. 
Moreover, if $1/2\le t\le 1$, then $\U(t)$ maps $\G_{\ell,\de}$ into itself. 
\end{lem}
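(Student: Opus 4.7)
The plan is to realize $\U(t)G$ through the auxiliary map
\[\Phi_t:P_{\ge 0}\HH\to P_{\ge 0}\HH,\qquad \Phi_t(\xi):=P_{\ge 0}U(t)(\xi+G(\xi)).\]
Indeed, $U(t)\gr{G}=\gr{\U(t)G}$ is equivalent to $\Phi_t$ being a bijection, in which case the unique candidate for the transform is
\[\U(t)G(\fy):=P_-U(t)(\Phi_t^{-1}(P_{\ge 0}\fy)+G(\Phi_t^{-1}(P_{\ge 0}\fy))).\]
Because $M_\de(0)=0$ makes $v\equiv 0$ a trajectory of \eqref{eq v}, and $G(0)=0$, one automatically has $\Phi_t(0)=0$ and hence $\U(t)G(0)=0$. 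The two tasks are the bijectivity of $\Phi_t$ and the Lipschitz bound on $\U(t)G$.

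\emph{Injectivity of $\Phi_t$.} If $\Phi_t(\xi^0)=\Phi_t(\xi^1)$, set $v^j(s):=U(s)(\xi^j+G(\xi^j))$. The graph condition yields $\|\diff v_-^\pa(0)\|_E=\|\diff G(\xi^\pa)\|_E\le\ell\tm v^\pa(0)$, so Lemma \ref{lem:diff inv} applies and gives $\|\diff v_-^\pa(t)\|_E\le C_L\ell\tm v^\pa(t)$. Since $\diff v_{\ge 0}^\pa(t)=0$, we have $\diff v^\pa(t)=\diff v_-^\pa(t)$, and hence
\[\tm v^\pa(t)\lec\|\diff v^\pa(t)\|_\HH\lec\|\diff v_-^\pa(t)\|_E\le C_L\ell\tm v^\pa(t),\]
forcing $v^0\equiv v^1$ under the smallness \eqref{cond el-de}.

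\emph{Surjectivity of $\Phi_t$.} Write $\Phi_t=e^{J\LL t}+\Psi_t$ where
\[\Psi_t(\xi):=\int_0^t e^{J\LL(t-s)}P_{\ge 0}M_\de(U(s)(\xi+G(\xi)))\,ds,\qquad \Psi_t(0)=0.\]
Lemma \ref{lem:diff v}, combined with the graph Lipschitz bound on $G$, shows $\Psi_t$ is $O(\de)$-Lipschitz in the mobile quasi-metric $\tm$ on $P_{\ge 0}\HH$. Since $e^{J\LL t}|_{P_{\ge 0}\HH}$ is a linear isomorphism with operator and inverse norms $\lec e^{\ok}$ by \eqref{est linearized}, solving $\Phi_t(\xi)=\psi$ reduces to the fixed-point equation $\xi=e^{-J\LL t}\psi-e^{-J\LL t}\Psi_t(\xi)$. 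By \eqref{cond el-de} the Lipschitz constant $\lec e^{\ok}\de$ of the perturbation is $\ll 1$, so a contraction in the complete quasi-metric space $(P_{\ge 0}\HH,\tm)$ (Proposition \ref{prop mb}) produces a unique preimage.

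\emph{Lipschitz bound on $\U(t)G$.} Given $\fy^0,\fy^1\in\HH$, set $\xi^j:=\Phi_t^{-1}(P_{\ge 0}\fy^j)$ and $v^j(s):=U(s)(\xi^j+G(\xi^j))$. The $E$-norm orthogonality of the spectral components yields
\[(\tm v^\pa(t))^2=(\tm(P_{\ge 0}v^\pa(t)))^2+\|\diff v_-^\pa(t)\|_E^2,\]
and since $P_{\ge 0}v^j(t)=P_{\ge 0}\fy^j$ the first term is $\le(\tm\fy^\pa)^2$. Lemma \ref{lem:diff inv} bounds the second by $C_L^2\ell^2(\tm v^\pa(t))^2$, and absorption yields $\tm v^\pa(t)\lec\tm\fy^\pa$. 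Therefore
\[\|\diff\U(t)G(\fy^\pa)\|_E=\|\diff v_-^\pa(t)\|_E\le C_L\ell\,\tm v^\pa(t)\lec C_L\ell\,\tm\fy^\pa,\]
so $\U(t)G\in\G_{C_L\ell,\de}$. For $t\in[1/2,1]$, the coefficient in Lemma \ref{lem:diff inv} is in fact strictly less than $\ell$ (cf.\ its proof, which gives a gain of $(1-\uk/3)$), which leaves just enough room after absorption to conclude $\U(t)G\in\G_{\ell,\de}$. \emph{The main obstacle} is the surjectivity step: the contraction must be run in the mobile metric $\tm$, not the $E$-norm, because the translation-induced transport term in $\diff M_\de^\pa$ is unbounded in $E$; it is precisely the mobile distance that converts $\Psi_t$ into a genuinely small Lipschitz perturbation, which is why \eqref{cond el-de} suffices to close the argument.
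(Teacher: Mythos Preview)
Your surjectivity argument has a genuine gap. You claim that Lemma~\ref{lem:diff v} shows $\Psi_t$ is $O(\de)$-Lipschitz in~$\tm$, but the lemma does not control the right quantity on the continuous spectral part. Lemma~\ref{lem:diff v} bounds $\bigl|[\tm P_\ga v^\pa]_0^t\bigr|$, i.e., the change in the mobile distance between the \emph{full} $\ga$-components $\ga^0(t),\ga^1(t)$ of two solutions; your contraction requires instead a bound on $\mathfrak{m}_{\phi_\de}\bigl(P_\ga\Psi_t(\xi^0),P_\ga\Psi_t(\xi^1)\bigr)$, where $P_\ga\Psi_t(\xi^j)=\ga^j(t)-e^{J\LL t}\ga^j(0)$ are the Duhamel remainders. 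These are different objects, and the latter is not available: by~\eqref{est diffM} the difference $\diff M_\de(v^\pa)$ is controlled only in $H^{-2}$ by $\de\,\tm v^\pa$, not in~$\HH$, precisely because of the transport term $\chi_\de(v)A(v)\cdot\na v$. Thus neither the $E$-norm nor the mobile distance of $P_\ga\diff\Psi_t^\pa$ can be made $O(\de)$-small relative to $\tm(\xi^0,\xi^1)$. The mobile distance resolves the derivative loss only for the \emph{full} solutions, via the moving-frame argument in the proof of Lemma~\ref{lem:diff v} (translating each $\ga^j$ by its own $c^j$ with $c^1(0)=q(0)$); this mechanism does not transfer to $\Psi_t$, which subtracts off the \emph{untranslated} linear flow $e^{J\LL t}$.

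The paper handles surjectivity by a finite-dimensional degree argument instead. If some $\psi\in P_{\ge 0}\HH$ were not hit, then the continuous map $m(a)=P_-U(-t)(a+\psi)-G(U(-t)(a+\psi))$ on $P_-\HH\simeq\R^K$ would avoid~$0$; but for $|a|\gg\de$ the cutoff $\chi_\de$ trivializes the flow, making $m(a)=e^{-J\LL t}a-\text{const}$, so the degree of the radial map built from~$m$ jumps from~$0$ (small radius) to~$1$ (large radius), a contradiction. This requires only continuity of $U(-t)$ in~$\HH$ and the linear behavior of the localized equation outside the $\de$-ball --- no Lipschitz smallness of~$\Psi_t$ is needed. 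Your injectivity step and your Lipschitz-bound step are essentially correct and match the paper's reasoning.
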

\begin{proof}
The previous lemma yields for any $\fy^0,\fy^1\in U(t)\gr{G}$, 
\EQ{
 \|\diff\fy_-^\pa\|_E \lec \ell \tm \fy^\pa.}
Since $\ell\ll 1$, it implies $\|\diff\fy_-^\pa\|_E \ll \tm \fy_{\ge 0}^\pa$. Then the conditions $U(t)\gr{G}\subset\gr{\U(t)G}$ and $\U(t)G\circ P_{\ge 0}=\U(t)G$ define $\U(t)G$ uniquely and consistently on the set
\EQ{
 P_{\ge 0}U(t)\gr{G} + P_-\HH.} 
The proof is complete once the above is shown to be $\HH$, for which we use the degree argument. Suppose for contradiction that there exists 
$\psi\in P_{\ge 0} \HH\setminus  P_{\ge 0}U(t)\gr{G}$. In other words, for any $a\in P_-\HH$, 
\EQ{
 U(-t)(a+\psi) \not\in \gr{G}.}
Let $m(a):=P_-U(-t)(a+\psi)-G(U(-t)(a+\psi))$, then $m$ is a continuous map from $P_-\HH$ to itself, such that $0\not\in m(P_-\HH)$. On the other hand, if $|a|\gg \de$, then 
\EQ{
 m(a)=e^{-J\LL t}a-G(e^{-J\LL t}\psi).}
Define $\Phi:\R^K\to P_-\HH$, $\Psi:\R^K\setminus\{0\}\to S^{K-1}$ and $\ti m:(0,\I)\times S^{K-1}\to S^{K-1}$ by 
\EQ{
 \pt \Phi(X) = \sum_{k\in K}X_k g_{k-}, \pq \Psi(X)=\frac{X}{|X|},
 \pq \ti m(R,\te) = \Psi\circ\Phi^{-1}\circ m\circ \Phi(R\te).}
Then $\ti m$ is continuous, but the degree of $\ti m(R,\cdot)$ is $0$ for small $R>0$ and $1$ for large $R$, which is a contradiction. 
Hence $\U(t)G$ is well-defined as a map on $\HH$ which is right-invariant for $P_{\ge 0}$. The Lipschitz bounds are immediate from the previous lemma. 
\end{proof}

\subsection{Contraction of graphs, center-stable case}
We introduce the following norm in $\G:=\bigcup_{\ell>0}\G_{\ell,\de}$
\EQ{ \label{def Gnorm}
 \|G\|_\G := \sup_{\psi\in\HH} \frac{\|G(\psi)\|_E}{\|\psi\|_E}.}
It is easy to see that the set $\G$ is independent of $\de>0$. For any $G\in\G_{\ell,\de}$ and any $\psi\in \HH$, we have $\|G(\psi)\|_E \le \ell\tm(\psi,0) \le \ell\|\psi\|_E$, and so 
\EQ{
 \|G\|_\G \le \ell.}
$\G$ is a Banach space with this norm, where each $\G_{\ell,\de}$ is a bounded closed set. The contraction argument is completed by
\begin{lem} \label{lem:contract}
In addition to \eqref{cond el-de}, let 
\EQ{ \label{cond2 el-de}
  \de  \ll \uk^2.}
Then the map $\U(t)$ is a contraction on $\G_{\ell,\de}$ for all $t\ge 1/2$. 
\end{lem}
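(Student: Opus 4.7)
The plan is a standard graph transform contraction argument, using Lemma~\ref{lem:diff v} to control the nonlinear corrections in terms of $\tm$ and exploiting the linear decay of the $P_-$ direction under the forward flow. First, I reduce to $t\in[1/2,1]$ via the semigroup identity $\U(t_1+t_2)=\U(t_1)\U(t_2)$: by Lemma~\ref{lem:U on graph}, $\U(t_j)$ preserves $\G_{\ell,\de}$ for $t_j\in[1/2,1]$, so a composition of contractions on this invariant set is still a contraction, and it suffices to prove the lemma for $t$ in this shorter range.

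Fix such a $t$ and $\psi\in\HH$. By Lemma~\ref{lem:U on graph}, for each $j=0,1$ there is a unique $\fy^j\in\gr{G^j}$ with $P_{\ge 0}U(t)\fy^j=P_{\ge 0}\psi$, and $\U(t)G^j(\psi)=P_-U(t)\fy^j$. Setting $v^j(s):=U(s)\fy^j$, one has $P_{\ge 0}\diff v^\pa(t)=0$, so $\diff v^\pa(t)$ lives entirely in $P_-\HH$, equals $\U(t)G^0(\psi)-\U(t)G^1(\psi)$, and $\tm v^\pa(t)=\|\diff v^\pa(t)\|_E$ (the $\ga$-parts coincide, killing the mobile-distance piece). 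Lemma~\ref{lem:bd v} applied in reverse yields $\|\fy^j\|_E\lec\|v^j(t)\|_E\le(1+\ell)\|\psi\|_E$.

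The heart of the proof is a coupled pair of estimates. To bound $\tm\fy^\pa$, I use Lemma~\ref{lem:diff v}: the approximation $\|P_d\diff v^\pa(t)-e^{J\LL t}P_d\diff\fy^\pa\|_E\lec\de\,\tm\fy^\pa$ and the $\ga$-increment bound $|\tm P_\ga v^\pa(t)-\tm P_\ga\fy^\pa|\lec\de\,\tm\fy^\pa$, combined with the fact that $P_+\diff v^\pa(t)$, $P_0\diff v^\pa(t)$ and $\tm P_\ga v^\pa(t)$ all vanish at time $t$, force through the spectral bounds \eqref{est linearized} run backward (with $e^{\uk t}$ strictly expanding $P_+$ and only $e^{\ka t}$ for $P_0$) the estimate $\|P_+\diff\fy^\pa\|_E+\|P_0\diff\fy^\pa\|_E+\tm P_\ga\fy^\pa\lec\de\,\tm\fy^\pa$. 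Absorbing then gives $\tm\fy^\pa\lec\|P_-\diff\fy^\pa\|_E$. Decomposing $P_-\diff\fy^\pa=[G^0(\fy^0)-G^0(\fy^1)]+[G^0(\fy^1)-G^1(\fy^1)]$ and using the $\tm$-Lipschitz bound on $G^0\in\G_{\ell,\de}$ together with the definition of $\|\cdot\|_\G$ produces $\|P_-\diff\fy^\pa\|_E\le\ell\,\tm\fy^\pa+\|G^0-G^1\|_\G\|\fy^1\|_E$; a second absorption of $\ell\,\tm\fy^\pa$ leaves $\tm\fy^\pa\lec\|G^0-G^1\|_\G\|\psi\|_E$.

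Propagating the $P_-$ part forward, Lemma~\ref{lem:diff v} and \eqref{est linearized} give $\|\diff v^\pa(t)\|_E\le e^{-\uk t}\|P_-\diff\fy^\pa\|_E+C\de\,\tm\fy^\pa$, and substituting the same decomposition of $P_-\diff\fy^\pa$ together with the $\tm\fy^\pa$ bound just proved yields
\[
 \|\U(t)G^0(\psi)-\U(t)G^1(\psi)\|_E\le\bigl[e^{-\uk t}(1+\ell)+C\de\bigr]\,\|G^0-G^1\|_\G\,\|\psi\|_E.
\]
For $t\ge 1/2$ and under \eqref{cond el-de}, \eqref{cond2 el-de}, the bracket is strictly less than $1$, and taking the supremum over $\psi$ gives the contraction in $\|\cdot\|_\G$. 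The main obstacle is precisely this coupling: neither $\tm\fy^\pa$ nor $\|\diff v^\pa(t)\|_E$ can be controlled in isolation, they feed into each other through the graph relations at the two endpoints, and closing the loop requires $\de\ll\uk^2$ so that the nonlinear $O(\de)$ cross terms fit comfortably inside the spectral gap $1-e^{-\uk t}\gec\uk$.
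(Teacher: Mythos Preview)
Your overall strategy is the same as the paper's, and the key identities you set up are correct: matching $P_{\ge 0}v^\pa$ at time $t$ forces $\tm v^\pa(t)=\|\diff v_-^\pa(t)\|_E$, and Lemma~\ref{lem:diff v} run backward pins the $P_{0+}$ and $\ga$ differences at time $0$ to $O(\de\,\tm\fy^\pa)$. The gap is in the constant in front of $\|\psi\|_E$ in your final line, which comes from the step ``Lemma~\ref{lem:bd v} applied in reverse yields $\|\fy^j\|_E\lec\|v^j(t)\|_E\le(1+\ell)\|\psi\|_E$''. The first line of \eqref{inc est} only gives $\|\fy^j\|_E\le C_0\|v^j(t)\|_E$ with an absolute $C_0$ of order $e^{\ok}$ (the $P_-$ component can grow by that factor going from $t$ back to $0$). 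That $C_0$ then sits in front of $e^{-\uk t}$ in your displayed contraction factor, and since $\uk\le 1<\ok$ and $t\le 1$, one has $e^{-\uk t}C_0\ge e^{\ok-\uk}>1$, so no contraction. In your displayed formula you have silently replaced $C_0(1+\ell)$ by $(1+\ell)$.

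The repair is exactly what the paper does. First, since $G^j=G^j\circ P_{\ge 0}$, one only needs $\|P_{\ge 0}\fy^1\|_E$, not $\|\fy^1\|_E$; likewise $\|\diff G^0(\fy^\pa)\|_E\le\ell\,\tm P_{\ge 0}\fy^\pa$, and you have already shown $\tm P_{\ge 0}\fy^\pa\lec\de\,\tm\fy^\pa$, so the $\ell$ in your decomposition becomes $C\ell\de$. Second, for $\|P_{\ge 0}\fy^1\|_E$ use the \emph{second and third} lines of \eqref{inc est} (not the first) together with \eqref{est linearized}; these give near-conservation of $\|\ga\|_E$ and the bound $\|P_{0+}\fy^1\|_E\le e^{\ka t}\|P_{0+}v^1(t)\|_E+C\de\|\psi\|_E$, hence $\|P_{\ge 0}\fy^1\|_E\le(e^{\ka t}+C\sqrt\de)\|\psi\|_E$. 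Feeding this into your scheme yields the paper's contraction factor $e^{-(\uk-\ka)t}(1+C\sqrt\de)$, and it is precisely here---absorbing $C\sqrt\de$ into the spectral gap $1-e^{-(\uk-\ka)/2}\gec\uk$---that the hypothesis $\de\ll\uk^2$ is actually used. Your closing sentence about ``$O(\de)$ cross terms'' would only require $\de\ll\uk$; the $\sqrt\de$ arises from the $\bde{\cdot}^3$ error in the $\ga$-norm estimate.
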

\begin{proof}
Let $T\in[1/2,1]$. For any $G^j\in\G_{\ell,\de}$ for $j=0,1$ and any $\psi\in\HH$, let 
\EQ{
 v^j(t):=U(t-T)[P_{\ge 0}\psi+(\U(T)G^j)\psi].}
Since $P_{\ge 0}v^0(T)=P_{\ge 0}v^1(T)$, we have 
\EQ{
 \tm v^\pa(T)=\|\diff v_-^\pa(T)\|_E.}
Applying Lemma \ref{lem:diff v} from $t=T$, we get for $0\le t\le T$, 
\EQ{
  \|\diff v_d^\pa(t)-e^{J\LL (t-T)}\diff v_-^\pa(T)\|_E + \tm \ga^\pa(t) \lec \de\|\diff v_-^\pa(T)\|_E.}
Hence using the same estimate on the linearized solution as in \eqref{est linearized}
\EQ{ \label{back extend}
 \pt \|\diff v_-^\pa(0)\|_E \ge \|e^{-J\LL T}\diff v_-^\pa(T)\|_E-C\de\|\diff v_-^\pa(T)\|_E
 \ge (e^{\uk T}-C\de)\|\diff v_-^\pa(T)\|_E,
 \pr \tm v_{\ge 0}^\pa(0) \lec \de\|\diff v_-^\pa(T)\|_E.}
On the other hand, since $v_-^j(0)=G^j(v^j(0))$ and $G^j\in\G_{\ell,\de}$, 
\EQ{ \label{t=0 adjust}
 \|\diff v_-^\pa(0)\|_E \pt\le \|\diff G^\pa(v^0(0))\|_E+\|\diff G^1(v^\pa(0))\|_E
 \pr\le\|\diff G^\pa\|_{\G}\|v_{\ge 0}^0(0)\|_E+\ell\tm v_{\ge 0}^\pa(0).}
\eqref{inc est} as well as \eqref{est linearized} yields
\EQ{
 \|v_{\ge 0}^0(0)\|_E^2 \le (e^{2\ka T}+C\de)\|\psi\|_E^2. }
Inserting this and the second inequality of \eqref{back extend} 
into \eqref{t=0 adjust}, we obtain 
\EQ{
 \|\diff v_-^\pa(0)\|_E \le (e^{\ka T}+C\sqrt\de)\|\diff G^\pa\|_{\G}\|\psi\|_E + \ell\de\|\diff v_-^\pa(T)\|_E.}
Combining this and \eqref{back extend}, we conclude that 
\EQ{ \label{contraction} 
 \|\diff v_-^\pa(T)\|_E \pt\le (1-C\de\ell)^{-1}(e^{\uk T}-C\de)^{-1}(e^{\ka T}+C\sqrt\de)\|\diff G^\pa\|_{\G}\|\psi\|_E
 \pr\le e^{-(\uk-\ka)T}(1+C\sqrt\de)\|\diff G^\pa\|_{\G}\|\psi\|_E.}
\eqref{cond2 el-de} and $\ka\ll\uk$ imply that there exists  $\La<1$, determined by $\uk,\ka,\de,\ell$ such that 
\EQ{
 \frac{\|\diff v_-^\pa(T)\|_E}{\|\psi\|_E} \le \La\|\diff G^\pa\|_{\G}.}
Taking the supremum over all $\psi\in \HH$ yields 
\EQ{
 \|\diff \, \U(T)G^\pa\|_{\G} \le \La\|\diff G^\pa\|_{\G},}
as desired. The case $T>1$ is now obvious by iteration. 
\end{proof}
Thus we obtain 
\begin{thm} \label{mfd loc eq}
Suppose that $\ell,\de>0$ satisfy \eqref{cond el-de} and \eqref{cond2 el-de}. Then there exists a unique $G_*\in\G_{\ell,\de}$ such that $\U(t)G_*=G_*$ for all $t\ge 0$. The uniqueness holds for any fixed $t>0$. 
\end{thm}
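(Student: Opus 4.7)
The plan is a straightforward application of the Banach fixed point theorem at the single time $T = 1$, followed by a bootstrap to all $t \ge 0$ via the semigroup property of the flow.

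First I would verify that $(\G_{\ell,\de}, \|\cdot\|_\G)$ is a complete metric space. The space $\G$ is Banach because the norm $\|G\|_\G$ controls $\|G(\psi)\|_E/\|\psi\|_E$, so that any Cauchy sequence has a pointwise $E$-limit that inherits the norm bound. The subset $\G_{\ell,\de}$ is closed because $\|\cdot\|_\G$-convergence implies pointwise $E$-convergence, and the three defining conditions---$G(0) = 0$, $G = G \circ P_{\ge 0}$, and the Lipschitz bound $\|\diff G(v^\pa)\|_E \le \ell\,\tm v^\pa$---all pass to such limits. By Lemma~\ref{lem:U on graph} at $t = 1$ combined with Lemma~\ref{lem:contract} at $T = 1$, the map $\U(1)$ is a strict contraction from $\G_{\ell,\de}$ into itself, so the Banach fixed point theorem yields a unique $G_* \in \G_{\ell,\de}$ satisfying $\U(1)G_* = G_*$.

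The next step is to upgrade $\U(1)G_* = G_*$ to $\U(t)G_* = G_*$ for every $t \ge 0$. The mechanism is the semigroup identity $\U(s+t) = \U(s)\,\U(t)$, inherited from $U(s+t) = U(s)U(t)$ through the defining relation $U(t)\gr{G} = \gr{\U(t)G}$. For $t \in [1/2, 1]$, Lemma~\ref{lem:U on graph} places $\U(t)G_*$ in $\G_{\ell,\de}$, and
\[
\U(1)\bigl(\U(t)G_*\bigr) \;=\; \U(t)\bigl(\U(1)G_*\bigr) \;=\; \U(t)G_*,
\]
so by the uniqueness of the fixed point of $\U(1)$ in $\G_{\ell,\de}$ we must have $\U(t)G_* = G_*$. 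For $t \in (0, 1/2)$, writing $G_* = \U(t + 1/2)G_* = \U(t)\bigl(\U(1/2)G_*\bigr) = \U(t)G_*$ does the job. For $t > 1$, choose $n \in \N$ with $t/n \in [1/2, 1]$ and iterate $\U(t) = \U(t/n)^n$. For the second clause, if $\U(t)G = G$ holds for some fixed $t > 0$ and some $G \in \G_{\ell,\de}$, then iterating gives $\U(nt)G = G$; choosing $n$ with $nt \ge 1/2$ and invoking the contraction property of $\U(nt)$ from Lemma~\ref{lem:contract} forces $G = G_*$.

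The only subtlety I anticipate is that Lemma~\ref{lem:U on graph} ensures $\U(t)$ maps $\G_{\ell,\de}$ into itself only for $t \in [1/2, 1]$, not for arbitrarily small times. This is precisely why the construction has to be funneled through the contraction at a fixed time $T \ge 1/2$: the short-time invariance of $G_*$ is recovered only a posteriori, as a consequence of the semigroup identity applied to the already-identified fixed point rather than by a direct Banach argument at small $t$.
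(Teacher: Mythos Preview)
Your proposal is correct and follows essentially the same route as the paper: apply the Banach fixed point theorem to $\U(T)$ for a fixed $T\ge 1/2$ on the closed set $\G_{\ell,\de}$, then use the semigroup identity together with uniqueness to propagate the fixed-point property to all $t\ge 0$, and finally handle uniqueness at a given $t>0$ by iterating into the contraction range. Your treatment of small times $t\in(0,1/2)$ via $\U(t)G_*=\U(t)\U(1/2)G_*=\U(t+1/2)G_*=G_*$ is in fact slightly cleaner than the paper's, which instead observes that $\U(t)G_*\in\G_{C_L\ell,\de}$ is a fixed point of $\U(T)$ and invokes uniqueness in that larger class; both arguments are valid, yours just avoids enlarging the Lipschitz constant.
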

\begin{proof}
For any $T\ge 1/2$, the above lemma implies that there is a unique fixed point of $\U(T)$ in $\G_{\ell,\de}$. Since the equation is invariant for time translation, it implies that $\U(t)G\in\G_{C_L\ell,\de}$ is also a fixed point for all $0\le t\le 1$. Then the uniqueness of the fixed point implies that $\U(t)G=G$ for all $0\le t\le 1$, and so for all $t\ge 0$. If $\U(t)H=H$ for some $t>0$ and some $H\in\G_{\ell,\de}$, then by iteration $\U(T)H=H$ for some $T\ge 1/2$, and so $H=G$. 
\end{proof}

Since $U(t)$ is invertible, $U(t)\gr{G_*} = \gr{G_*}$ for all $t\in\R$. 

\noindent The conditions \eqref{cond el-de} and \eqref{cond2 el-de} are satisfied for $\ell=O(\de)$ as $\de\to+0$, which implies that 
\EQ{
 \gr{G_*}\ni\fy,\ \|\fy\|_\HH\le\de \implies |\om(\fy,g_{k+})|\lec\de^2,}
in other words, $\gr{G_*}$ is normal at $0$ to $(-k,1)\rh_k$ for each $k\in K$.  

Notice that the above construction did not really use the special property of the generalized null space of the linearized operator. However, the constructed manifold makes sense for the original equation only on the subset $\HH_\perp$, for which we need the property that the generalized null space is exactly generated by the symmetries
 of the equation. 

\subsection{Evolution of graphs, unstable case}
We now carry out an analogous procedure for the finite-dimensional unstable manifold. 
Thus, we now consider the graphs of $v_{+}\mapsto v_{\le0}$ satisfying a Lipschitz condition 
\begin{multline} \label{def Gelde+}
 \G_{\ell,\de}^+:=\{G: \HH \to P_{\le0} \HH \mid G=G\circ P_{+},\ G(0)=0,\  \tm G(v^\pa) \le \ell \|\diff v^\pa_+ \|_E  \}
\end{multline}
for small $\ell>0$, and the graph of $G\in\G_{\ell,\de}^+$ is denoted by 
\EQ{ \label{def graph+}
 \gr{G} := \{\fy\in\HH \mid P_{\le0}\fy=G(\fy)\}.} 
The  unstable manifold will be found as the unique invariant graph, by the contraction mapping principle in $\G_{\ell,\de}^+$. 
We formulate the analogue of Lemma~\ref{lem:diff inv} in this case.

\begin{lem} \label{lem:diff inv+}
There exists $C_L\ge 1$ such that if $\ell,\de>0$ satisfy 
\eqref{cond el-de}, 
then for any two solutions $v^j(t)=U(t)v^j(0)$ ($j=0,1$) satisfying 
\EQ{
 \tm v^\pa_{\le 0} (0)    \le \ell   \|\diff v_+^\pa(0)\|_E   ,}
one has 
\EQ{ \label{diff est+}
 \tm v^\pa_{\le0}    (t)    \le \CAS{C_L \ell \|\diff v_+^\pa(t)\|_E   &(|t|\le 1),\\    
  \ell  \|\diff v_+^\pa(t)\|_E   &(1/2\le t\le 1).}}
\end{lem}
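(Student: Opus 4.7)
\noindent\emph{Proof plan.} The argument mirrors the proof of Lemma~\ref{lem:diff inv}, with the unstable mode $P_+$ now in the role formerly played by the stable mode $P_-$. The plan is to split each spectral component of $\diff v^\pa(t)$ into a linearized piece (governed by \eqref{est linearized}) plus a nonlinear error of size $\lec\de\tm v^\pa(0)$ (from Lemma~\ref{lem:diff v}), and then to compare an upper bound on $\tm v^\pa_{\le 0}(t)$ with a lower bound on $\|\diff v^\pa_+(t)\|_E$. The hypothesis together with the $E$-norm splitting
\[
\tm v^\pa(0)^2 = \|\diff v^\pa_+(0)\|_E^2 + \tm v^\pa_{\le 0}(0)^2 \le (1+\ell^2)\|\diff v^\pa_+(0)\|_E^2
\]
shows that the nonlinear error $C\de\tm v^\pa(0)$ may everywhere be replaced by $C\de\|\diff v^\pa_+(0)\|_E$.

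From Lemma~\ref{lem:diff v} and \eqref{est linearized} one then obtains, uniformly for $|t|\le 1$,
\[
\begin{aligned}
\|\diff v^\pa_+(t)\|_E &\ge \min(e^{\uk t},e^{\ok t})\|\diff v^\pa_+(0)\|_E - C\de\|\diff v^\pa_+(0)\|_E,\\
\|\diff v^\pa_0(t)\|_E &\le e^{\ka|t|}\|\diff v^\pa_0(0)\|_E + C\de\|\diff v^\pa_+(0)\|_E,\\
\|\diff v^\pa_-(t)\|_E &\le \max(e^{-\uk t},e^{-\ok t})\|\diff v^\pa_-(0)\|_E + C\de\|\diff v^\pa_+(0)\|_E,\\
\tm P_\ga v^\pa(t) &\le \tm P_\ga v^\pa(0) + C\de\|\diff v^\pa_+(0)\|_E.
\end{aligned}
\]
Combining the last three via $\tm v^\pa_{\le 0}(t)^2 = \|\diff v^\pa_{0,-}(t)\|_E^2 + (\tm P_\ga v^\pa(t))^2$ yields, with some absolute $C_0$,
\[
\tm v^\pa_{\le 0}(t) \le C_0\max(e^{\ka|t|},e^{\ok|t|})\,\tm v^\pa_{\le 0}(0) + C\de\|\diff v^\pa_+(0)\|_E,
\]
where for $t\ge 0$ the $\max$ improves to $e^{\ka t}$, since the $P_-$ piece in fact decays forward in time.

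Dividing by the lower bound on $\|\diff v^\pa_+(t)\|_E$ yields, for $|t|\le 1$,
\[
\frac{\tm v^\pa_{\le 0}(t)}{\|\diff v^\pa_+(t)\|_E} \le \frac{C_0 e^{\ok}\ell + C\de}{e^{-\ok} - C\de} \le C_L\ell
\]
for an absolute $C_L = C_L(\ok)$, as soon as $\de\lec\ell$, which is forced by \eqref{cond el-de}. For $1/2\le t\le 1$ the center and $\ga$ parts grow by at most $e^\ka$ while $P_+$ grows by at least $e^{\uk/2}$, so
\[
\frac{\tm v^\pa_{\le 0}(t)}{\|\diff v^\pa_+(t)\|_E} \le \frac{C_0 e^{\ka}\ell + C\de}{e^{\uk/2} - C\de} \le e^{\ka-\uk/2}\ell\bigl(1+O(\de/\ell)\bigr) + O(\de),
\]
which is $\le\ell$ because $e^{\ka-\uk/2}\le 1-c\uk$ for some $c>0$ (using $\ka\ll\uk$) and $\de\ll\ell\uk$ by \eqref{cond el-de}. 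The main difficulty I foresee is preserving this small contraction margin $\uk/2-\ka\sim\uk$ against the absolute-constant loss $C_0$ arising from recombining the three pieces of $\tm v^\pa_{\le 0}$; as in Lemma~\ref{lem:diff inv}, the stronger smallness $\de\ll\ell\uk$ built into \eqref{cond el-de} is exactly what is needed to absorb the $C\de$ terms into this gap.
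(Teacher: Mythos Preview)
Your approach is essentially the paper's: split into linearized evolution plus an $O(\de\,\tm v^\pa(0))$ error via Lemma~\ref{lem:diff v}, bound the linearized pieces by~\eqref{est linearized}, and compare the resulting upper bound on $\tm v^\pa_{\le 0}(t)$ against the lower bound on $\|\diff v^\pa_+(t)\|_E$.

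The one point where you are less sharp than the paper is exactly the $C_0$ issue you flag at the end, and your claimed resolution does not quite work as written: the multiplicative loss $C_0$ sits in front of the main term $e^{\ka}\ell$, not in front of the $C\de$ error, so smallness of $\de$ alone cannot remove it. (In Lemma~\ref{lem:diff inv} this did not arise because there was only the single component $v_-$ to bound from above.) The paper avoids the loss by working at the level of squares: since the $E$-norm is diagonal in the spectral decomposition,
\[
\tm v^\pa_{\le 0}(t)^2 = \|\diff v^\pa_0(t)\|_E^2 + \|\diff v^\pa_-(t)\|_E^2 + (\tm P_\ga v^\pa(t))^2,
\]
and expanding each $(a_i+b_i)^2$ produces cross terms $2a_ib_i$ of total size $O(\ell\de)\|\diff v^\pa_+(0)\|_E^2$ (because each $a_i\le \tm v^\pa_{\le 0}(0)\le\ell\|\diff v^\pa_+(0)\|_E$ by hypothesis). \emph{This} is what gets absorbed by $\de\ll\ell\uk$. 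The paper records the outcome as
\[
\tm v^\pa_{\le 0}(t)^2 \le \|\diff e^{J\LL t}v^\pa_{0-}(0)\|_E^2 + \tm\ga^\pa(0)^2 + C\de^2\tm v^\pa(0)^2 \le \bigl(e^{2\ka t}\ell^2 + C\de^2(1+\ell^2) + \ell^2\bigr)\|\diff v^\pa_+(0)\|_E^2,
\]
and then divides by the squared lower bound on $\|\diff v^\pa_+(t)\|_E$. With this amendment your argument goes through verbatim.
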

\begin{proof}
We again have \eqref{est linearized} 
for the  linearized solutions of the discrete modes. In particular, 
\EQ{
\|  \diff v^\pa_+(t) \|_E &\ge \min(e^{\uk t},e^{\ok t}) \| \diff v^\pa_+(0)\|_E - C\de \tm v^\pa(0)
}
By Lemma~\ref{lem:diff v}, for $t\ge0$, 
\EQ{
\tm v^\pa_{\le0} (t)^2 &\le \|\diff e^{J\LL t}v_{0-}^\pa(0)\|_E^2 + \tm \ga^\pa(0)^2 + C\de^2 \tm v^\pa(0)^2 \\
&\le (e^{2\kappa t} \ell^2  + C\de^2 (1+\ell^2) + \ell^2)  \| \diff v_+^\pa(0)\|_E^2
}
and one now concludes by combining these estimates, cf.~Lemma~\ref{lem:diff inv}. 
\end{proof}

One now has the following analogue of Lemma~\ref{lem:U on graph}. 

\begin{lem} \label{lem:U on graph+}
Under the condition \eqref{cond el-de}, $U(t)$ for $|t|\le 1$ defines a map $\U(t):\G_{\ell,\de}^+\to\G_{C_L\ell,\de}^+$ uniquely by the relation $U(t)\gr{G}=\gr{\U(t)G}$. 
Moreover, if $t\ge \f12$, then $\U(t)$ maps $\G_{\ell,\de}^+$ into itself. 
\end{lem}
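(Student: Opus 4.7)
The plan is to mirror the proof of Lemma~\ref{lem:U on graph}, exchanging the roles of $P_-$ and $P_+$. First I establish that $\U(t)G$ is well-defined on $P_+U(t)\gr{G}$. For any two solutions $v^j(t)=U(t)v^j(0)$ with $v^j(0)\in\gr{G}$, the definition of $\G_{\ell,\de}^+$ gives $\tm v^\pa_{\le 0}(0)=\tm G(v^\pa_+(0))\le\ell\|\diff v^\pa_+(0)\|_E$, so Lemma~\ref{lem:diff inv+} applies and yields $\tm v^\pa_{\le 0}(t)\le C_L\ell\|\diff v^\pa_+(t)\|_E$ for $|t|\le 1$, with the stronger constant $\ell$ when $1/2\le t\le 1$. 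Since $C_L\ell\ll 1$, any two points of $U(t)\gr{G}$ sharing a $P_+$-coordinate must coincide, so setting $\U(t)G(P_+\fy):=P_{\le 0}\fy$ for $\fy\in U(t)\gr{G}$ is consistent; combined with the requirement $\U(t)G=\U(t)G\circ P_+$, this defines $\U(t)G$ uniquely on $P_+U(t)\gr{G}+P_{\le 0}\HH$, with the desired Lipschitz estimate an immediate consequence of Lemma~\ref{lem:diff inv+}.

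Next I show $P_+U(t)\gr{G}=P_+\HH$, so that $\U(t)G$ extends to all of $\HH$. Parametrize $\gr{G}$ via $a\mapsto a+G(a)$ for $a\in P_+\HH\cong\R^K$, and define $F:\R^K\to\R^K$ by $F(a):=P_+U(t)(a+G(a))$. Since $\tm(0,G(a))\lec\ell\|a\|_E$, the equivalence of the $\HH$- and $E$-norms forces $\|a+G(a)\|_\HH\gec\|a\|_E$, so for $\|a\|_E\gg\de$ the cutoff $\chi_\de$ vanishes along the entire trajectory $\{U(s)(a+G(a))\}_{0\le s\le t}$ (using Lemma~\ref{lem:bd v} together with the linear expansion \eqref{est linearized} on $P_+\HH$ to verify that the $\HH$-norm remains large throughout). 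On this regime $U(t)$ reduces to $e^{J\LL t}$, and since $e^{J\LL t}$ commutes with $P_+$ while $G(a)\in P_{\le 0}\HH$, we obtain $F(a)=e^{J\LL t}a|_{P_+\HH}$, a positive-definite linear isomorphism of $\R^K$. A standard degree argument---identical in spirit to the one in Lemma~\ref{lem:U on graph}, but considerably simpler since the target is finite-dimensional---then shows that every $\psi\in P_+\HH$ lies in $F(\R^K)$.

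Finally, to extend invariance of $\G_{\ell,\de}^+$ from $[1/2,1]$ to arbitrary $t\ge 1/2$, I invoke the semigroup identity $\U(s+r)=\U(s)\circ\U(r)$, which follows from the global well-posedness of \eqref{eq v} in Lemma~\ref{lem:bd v} together with the uniqueness just established, and decompose any $t\ge 1/2$ into finitely many time-steps in $[1/2,1]$, each of which preserves $\G_{\ell,\de}^+$. The only substantive ingredient is Lemma~\ref{lem:diff inv+}, which itself closely mirrors Lemma~\ref{lem:diff inv}; beyond that, no new difficulty arises, and the finite dimensionality of the unstable direction makes the surjectivity step routine, in contrast to the more delicate situation encountered in the center-stable case.
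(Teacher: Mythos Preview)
Your proof is correct and follows essentially the same strategy as the paper: well-definedness from Lemma~\ref{lem:diff inv+}, surjectivity of $P_+U(t)$ on $\gr{G}$ via a finite-dimensional degree argument, and extension to $t\ge\frac12$ by iteration. The only cosmetic difference is that you observe the flow is exactly linear for $\|a\|_E\gg\de$ (so $F$ agrees with the isomorphism $e^{J\LL t}|_{\HH_+}$ outside a large ball), whereas the paper bounds $\|P_+U(t)(\psi+G(\psi))\|_\HH\gtrsim R$ on $\partial B_R^+$ via Lemma~\ref{lem:diff v} and invokes degree invariance under the homotopy in~$t$ from the identity at $t=0$.
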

\begin{proof} The mapping properties for $|t|\le 1$ and $\f12\le t\le 1$ are an immediate consequence of the previous lemma. The extension
to $t\ge\f12$ then follows by iteration. 
As in the case of the center-stable version, the main issue is to show that $P_+ U(t) \gr{G} = P_+\HH=\HH_+$ for all $|t|\le 1$. 
Thus take $\psi_0\in  \HH_+$ and denote the $R$-ball in $\HH_+$ by~$B_R^+$. Lemma~\ref{lem:diff v} implies that if $0<\ell\ll 1$ then 
\[
 \|P_+U(t)(\psi+G(\psi))\|_\HH \gtrsim R \qquad \forall \; |t| \le 1,\; \forall \psi \in \partial  B_R^+
\]
for any $G\in \G_{\ell,\de}^+$, and with absolute implicit constants.  This shows that, with $\Phi(\psi)=\psi+G(\psi)$,  
\[
\mathrm{deg} (P_+ U(t)\Phi , B_R^+, \psi_0)=1 \quad\forall\; |t|\le1
\]
provided $R$ is sufficiently large, 
and we are done. 
\end{proof}

\subsection{Contraction of graphs, unstable case}

Let $\G^+:=\bigcup_{\ell>0}\G_{\ell,\de}^+$. As before, the set $\G^+$ is independent of $\de>0$. 
We introduce the following quasi-distance $d_+$ in $\G^+$: for any $G^1, G^2\in \G^+$ let 
\EQ{ \label{def G+ norm}
  d_+(G^1, G^2)   := \sup_{\psi\in\HH} \frac{\tm G^\pa (\psi)}{\|\psi\|_E}.}
It is clear that this expression is finite, and that it satisfies a triangle inequality with the same multiplicative loss as in~Proposition~\ref{prop mb}: 
\EQ{\label{d+ C0}
d_+(G^1, G^3)\le C_d( d_+(G^1, G^2) + d_+(G^2, G^3) ).
}
Moreover, $\G^+$ is a complete quasi-distance space, in which $\G_{\ell,\de}^+$ is closed. Recall that the Banach fixed point theorem is valid in complete quasi-distance spaces: 
\begin{lem} \label{G+ d+} 
Let $X$ be a complete quasi-distance space, and let $A:X\to X$ be a contraction. Then there is a unique fixed point $x_*\in X$ of $A$, which is obtained by $x_*=\lim_{n\to\I}A^n(x)$ for any $x\in X$.
\end{lem}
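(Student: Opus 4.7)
The plan is to follow the classical Picard iteration argument, the only genuine wrinkle being the multiplicative constant $C_d$ coming from the quasi-triangle inequality (property (3) of Proposition \ref{prop mb}, inherited by any complete quasi-distance space). Write $\lambda\in(0,1)$ for the contraction ratio of $A$, pick any $x\in X$, and set $x_n:=A^n(x)$. Then immediately $d(x_n,x_{n+1})\le \lambda^n d(x_0,x_1)=:\lambda^n D$.

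The first step is to show that $(x_n)$ is Cauchy in the quasi-distance sense. Iterating the quasi-triangle inequality along the finite chain $x_n,x_{n+1},\ldots,x_m$ yields a geometric-type bound
\[
 d(x_n,x_m)\;\le\; D\sum_{k=0}^{m-n-1} C_d^{k+1}\lambda^{n+k}.
\]
This sum is controlled and tends to $0$ as $n\to\infty$ provided $C_d\lambda<1$. If the constant $\lambda$ coming with $A$ is not that small, the standard fix is to pass first to the iterate $A^N$: its contraction ratio is $\lambda^N$, so for $N$ large enough $C_d\lambda^N<1$ and the bound above applies to $A^N$. Any fixed point of $A^N$ is then automatically a fixed point of $A$, because $A^N(Ax_*)=A(A^Nx_*)=Ax_*$ forces $Ax_*=x_*$ by the uniqueness step below applied to $A^N$.

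The remaining steps are routine. Once $(x_n)$ is Cauchy, the assumed completeness of $X$ produces $x_*\in X$ with $d(x_n,x_*)\to 0$. The contraction inequality gives $d(Ax_n,Ax_*)\le \lambda\,d(x_n,x_*)\to 0$, i.e.\ $x_{n+1}\to Ax_*$ in the quasi-distance, and then one more application of the quasi-triangle inequality
\[
 d(x_*,Ax_*)\;\le\; C_d\bigl(d(x_*,x_{n+1})+d(x_{n+1},Ax_*)\bigr)\;\longrightarrow\;0
\]
shows $Ax_*=x_*$. For uniqueness, if $Ay_*=y_*$ also, then $d(x_*,y_*)=d(Ax_*,Ay_*)\le\lambda d(x_*,y_*)$, forcing $d(x_*,y_*)=0$, i.e.\ $x_*=y_*$. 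The same argument with two arbitrary starting points shows that the Picard orbit $A^n(x)$ converges to $x_*$ for every $x\in X$.

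The only real obstacle is the accumulation of the factor $C_d$ in the Cauchy step: unlike in an ordinary metric space, one cannot sum a pure geometric series in $\lambda$ alone, but must absorb each application of the quasi-triangle by $C_d$. Passing to a sufficiently high iterate of $A$ is exactly what converts the problem back to the classical Banach setting, after which the argument is identical to the one for genuine metric spaces.
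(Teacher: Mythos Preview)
Your proof is correct and follows essentially the same route as the paper's: both arguments pass to a high iterate $A^N$ so that $C_d\lambda^N<1$, run the standard Cauchy estimate with the quasi-triangle constant absorbed geometrically, and then deduce that the fixed point of $A^N$ is in fact fixed by $A$. The only cosmetic difference is in this last step: you invoke uniqueness of the fixed point of $A^N$ applied to $Ax_*$, while the paper writes $d(Ax_*,x_*)=d(A^{N+1}x_*,A^Nx_*)\le\lambda^N d(Ax_*,x_*)$ directly.
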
 
\begin{proof}
Let $C\ge 1$ be the constant in the quasi-triangle inequality in $X$, let $\La\in(0,1)$ be the Lipschitz constant of $A$, and fix $m\in\N$ so that $\La^mC<1$. Take any $x_0\in X$ and let $x_n=A^{nm}(x_0)$ for each $n\in\N$. Then
\EQ{
 d(x_{n+1},x_n)=d(A^m(x_n),A^m(x_{n-1}))\le\La^md(x_n,x_{n-1})
 \le\cdots\le \La^{mn}d(x_1,x_0)}
Hence for any $k>j\ge 1$, by repeated use of the quasi-triangle inequality, 
\EQ{
 d(x_k,x_j)\pt\le Cd(x_k,x_{j+1})+Cd(x_{j+1},x_j)
 \le\cdots\le \sum_{l=j+1}^{k}C^{l-j}d(x_l,x_{l-1})
 \pr\le\sum_{l=j+1}^{k}(C\La^m)^{l-j}\La^{mj}d(x_1,x_0)
 \le\frac{\La^{mj}}{1-C\La^m}d(x_1,x_0).}
Hence $x_n\to\exists x_\I\in X$, and by the continuity of $A$, $A^m(x_\I)=x_\I$. Then 
\EQ{
 d(A(x_\I),x_\I)=d(A^{m+1}(x_\I),A^m(x_\I))\le \La^m d(A(x_\I),x_\I),}
which implies that $A(x_\I)=x_\I$. The uniqueness follows in the well-known way. 
\end{proof}
The following is an analogue of Lemma \ref{lem:contract}, but here the evolution time has to be long enough to absorb the quasi-triangle factor $C_d$ in using the ``chain-rule" in $\G^+$.
\begin{lem}
There are $\de_0>0$ and $T>0$ such that if $\de\le\de_0$ and \eqref{cond el-de} is satisfied, then the map $\U(t)$ is a contraction on $\G_{\ell,\de}^+$ for all $t\ge T$.  
\end{lem}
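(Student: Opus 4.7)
The plan mirrors Lemma~\ref{lem:contract} with four essential changes: the role of the ``closed'' projection $P_{\ge 0}$ is played by $P_+$; the contraction factor now comes from the backward decay of the unstable modes (rather than the stable ones); the quasi-triangle constant $C_d$ from Proposition~\ref{prop mb} must be absorbed by taking $T$ large, rather than by $T\in [1/2,1]$; and Lemma~\ref{lem:diff v} has to be iterated over unit time-steps to reach a long time $T$. Fix $T\gg 1$ to be chosen. For any $G^0, G^1 \in \G_{\ell,\de}^+$ and $\psi \in \HH$, define the pair
\EQ{
v^j(t) := U(t-T)\bigl[P_+\psi + (\U(T)G^j)(\psi)\bigr], \qquad j=0,1,\ \ t\in [0,T],
}
so that $v^j(T)\in \gr{\U(T)G^j}$, $v^j(0)\in \gr{G^j}$, and $\diff v_+^\pa(T)=0$ by construction. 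Because the $v_+$-difference vanishes at $t=T$, the definition of $\tm$ collapses to $\tm v^\pa(T)=\tm v_{\le 0}^\pa(T)=\tm\bigl((\U(T)G^0)(\psi),(\U(T)G^1)(\psi)\bigr)$, which is exactly the quantity to be bounded by $d_+(G^0,G^1)\|\psi\|_E$.

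The heart of the proof is a pair of propagation estimates obtained by iterating Lemma~\ref{lem:diff v} together with the linearized bounds \eqref{est linearized} on successive unit intervals, absorbing nonlinear corrections of size $O(\de)\tm v^\pa$ via a Gronwall loop (which is where the constraint $\de\le\de_0(T)$ enters). Running backward from $T$, the linear contraction on $P_+\HH$ gives $\|v_+^0(0)\|_E \lec (e^{-\uk T}+\sqrt\de)\|\psi\|_E$, while $\diff v_+^\pa(T)=0$ combined with the nonlinear drift yields $\|\diff v_+^\pa(0)\|_E \lec \sqrt\de\,\tm v^\pa(0)$. Running forward from $0$ to $T$, the spectral bounds on $P_-,P_0,P_\ga$ produce $\tm v^\pa(T) \lec (e^{\ka T}+\sqrt\de)\,\tm v_{\le 0}^\pa(0)$, the worst growth factor being $e^{\ka T}$ on the neutral modes.

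The graph identity $v_{\le 0}^j(0)=G^j(v_+^j(0))$, the quasi-triangle inequality, and the Lipschitz bound defining $\G_{\ell,\de}^+$ then give
\EQ{
\tm v_{\le 0}^\pa(0)
&\le C_d\,\tm\bigl(G^0(v_+^0(0)), G^1(v_+^0(0))\bigr) + C_d\,\tm\bigl(G^1(v_+^0(0)), G^1(v_+^1(0))\bigr) \\
&\le C_d\,d_+(G^0,G^1)\,\|v_+^0(0)\|_E + C_d\,\ell\,\|\diff v_+^\pa(0)\|_E,
}
and substituting the propagation bounds yields $\tm v^\pa(T) \lec C_d\, e^{-(\uk-\ka)T}(1+\sqrt\de)\,d_+(G^0,G^1)\,\|\psi\|_E$ after absorbing a remainder using $\tm v^\pa(0) \lec \tm v_{\le 0}^\pa(0) + \|\diff v_+^\pa(0)\|_E$. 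Taking the supremum over $\psi$ delivers the contraction with factor $\La = C_d\, e^{-(\uk-\ka)T}(1+C\sqrt\de)$, which is $<1$ once $T\ge T_0 := (\log 3C_d)/(\uk-\ka)$ and $\de$ is correspondingly small; the case $t>T$ follows by iteration and semigroup composition, using the uniform boundedness of $\U(s)$ on $\G_{\ell,\de}^+$ for $s\in [0,T]$ from Lemma~\ref{lem:U on graph+}. The principal obstacle is balancing the quasi-triangle loss $C_d$ (which forces $T$ large) against the Gronwall accumulation of nonlinear errors (which forces $\de$ small \emph{after} $T$ is fixed); a secondary nuisance is that every propagation estimate must be phrased in the mobile quasi-distance $\tm$ rather than in the $\HH$-norm, so one must track how $\tm$ transports across each unit step via Lemma~\ref{lem:diff v}.
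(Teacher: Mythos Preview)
Your plan is correct and follows the same strategy as the paper's proof: set up $v^j(t)=U(t-T)[P_+\psi+(\U(T)G^j)\psi]$ so that $\diff v_+^\pa(T)=0$, iterate Lemmas~\ref{lem:bd v} and~\ref{lem:diff v} over unit steps with $\de$ small depending on $T$ to get the three propagation bounds, feed these into the graph identity via the quasi-triangle inequality (producing the factor $C_d$), and finally choose $T$ large to make $C_d\,e^{-(\uk-\ka)T}(1+C\sqrt\de)<1$. The only organizational difference is bookkeeping: the paper routes all three refined estimates through the target quantity $\tm v_{\le 0}^\pa(T)$ (obtaining $\|\diff v_+^\pa(0)\|_E\lec\sqrt\de\,\tm v_{\le 0}^\pa(T)$ and $\tm v_{\le 0}^\pa(0)\ge e^{-2\ka T}\tm v_{\le 0}^\pa(T)$ directly), whereas you route through $\tm v_{\le 0}^\pa(0)$ and then apply your forward bound; this costs you the extra absorption step $\tm v^\pa(0)\lec\tm v_{\le 0}^\pa(0)+\|\diff v_+^\pa(0)\|_E$ but is otherwise equivalent. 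For the extension to $t>T$, the paper's phrasing is slightly cleaner than ``semigroup composition plus uniform boundedness'': it observes that the same argument gives the contraction for every $T'\in[T,2T]$ (possibly after shrinking $\de$), and then any $t\ge T$ is an integer multiple of some such $T'$.
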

\begin{proof} 
Let $G^1, G^2\in \G_{\ell,\de}^+$, $T>0$, $\psi\in\HH_+$, and 
\EQ{
 v^j(t):=U(t-T)[\psi+  (\U(T)G^j)\psi],\qquad (j=0,1).} 
Iterating Lemmas \ref{lem:bd v} and \ref{lem:diff v} from $t=T$ down to $t=0$, we obtain 
\EQ{
\pt \|v^\pa(t)\|_E \le Ce^{CT}\|\psi\|_E, 
\pq \tm v^\pa(t) \le Ce^{CT}\tm v^\pa(T)=Ce^{CT}\tm v_{\le 0}^\pa(T), 
}
for $0\le t\le T$ with some constant $C\ge 1$ (determined by $d$, $Q$, $f$ and $\ka$). Hence if 
\EQ{ \label{cond de-T}
 \de \ll e^{-2CT}/C^2,}
then by iteration of those lemmas again, we deduce that   
\EQ{ \label{long-time est}
 \pt \|v^0_+(0)\|_E \lec e^{-\uk T/2}\|\psi\|_E,
 \pr \tm v_{\le 0}^\pa(0) \ge e^{-2\ka T}\tm v_{\le 0}^\pa(T),
 \pq \|\diff v_+^\pa(0)\|_E \lec \sqrt\de \tm v_{\le 0}^\pa(T).}
Since $v_{\le 0}^j(0)=G^j(v^j(0))$ and $G^j\in\G_{\ell,\de}^+$, 
\EQ{ \label{chain est}
 \tm v_{\le 0}^\pa(0) \pt\le C_d[\tm G^\pa(v^0(0)) + \tm G^1(v^\pa(0))]
 \pr\le C_d[d_+(G^\pa)\|v_+^0(0)\|_E + \ell\|\diff v_+^\pa(0)\|_E].}
Plugging \eqref{long-time est} into the above, we obtain 
\EQ{
 \tm v_{\le 0}^\pa(T) \le e^{2\ka T}C_d[d_+(G^\pa)Ce^{-\uk T/2}\|\psi\|_E + \ell\sqrt\de\tm v_{\le 0}^\pa(T)]. }
Choosing $T$ so large while keeping \eqref{cond de-T}, we can ensure that 
\EQ{
 \tm (\U(T)G^\pa)\psi = \tm v_{\le 0}^\pa(T) \le \La\|\psi\|_E,}
for some constant $\La\in(0,1)$ determined by $d,f,Q,\ka,T$ and $\de$. 
Obviously, this remains to be true even if we replace $T$ with any $T'\in[T,2T]$, taking $\de$ even smaller if necessary. Hence iterating $\U(t)$ allows one to draw the same conclusion for all $t\ge T$. 
\end{proof}
 
 By the same arguments as in Theorem~\ref{mfd loc eq} one now concludes the following. 
 
 \begin{thm} \label{u mfd loc eq}
Suppose that $\ell,\de>0$ satisfy the assumptions of the previous lemma. Then there exists a unique $G_*^+\in\G_{\ell,\de}^+$ such that $\U(t)G_*^+=G_*^+$ for all $t\ge 0$. The uniqueness holds for any fixed $t>0$.  Moreover, if $v(0)\in \gr{G_*^+}$, then $\| U(t)v(0) \|_{\HH}\to0$ exponentially as $t\to-\I$; in fact, for any $\e>0$, 
\EQ{\label{uk eps}
e^{-(\uk-\e) t} \| U(t)v(0)\|_{\HH}\to0 \qquad t\to-\I
}
\end{thm}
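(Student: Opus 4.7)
The existence and uniqueness of $G_*^+$ follow the template of Theorem~\ref{mfd loc eq}. The preceding lemma supplies $T>0$ such that $\U(T)$ contracts on $\G_{\ell,\de}^+$, a closed subset of the complete quasi-distance space $(\G^+,d_+)$; Lemma~\ref{G+ d+} then yields a unique fixed point $G_*^+\in\G_{\ell,\de}^+$ of $\U(T)$. Time-translation invariance gives $\U(T)\U(t)G_*^+ = \U(t)\U(T)G_*^+ = \U(t)G_*^+$, so $\U(t)G_*^+$ is another fixed point of $\U(T)$; since Lemma~\ref{lem:U on graph+} places it in $\G_{\ell,\de}^+$ for $t\ge 1/2$, uniqueness forces $\U(t)G_*^+=G_*^+$ there, and iteration extends this to all $t\ge 0$. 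Uniqueness of the fixed point of $\U(t)$ at any single $t>0$ follows by iterating until a time $\ge T$.

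For the exponential decay, let $v(0)\in\gr{G_*^+}$ and $v(t):=U(t)v(0)$. Invertibility of $U(t)$ together with $\U(t)G_*^+=G_*^+$ for $t\ge 0$ extends invariance of $\gr{G_*^+}$ to all $t\in\R$, so $v_{\le 0}(t) = G_*^+(v_+(t))$ throughout. Applying the Lipschitz estimate for $G_*^+$ to the pair $(0,v(t))$ together with $G_*^+(0)=0$ and the identity $\tm(0,w)=\|w\|_E$, one obtains $\|v_{\le 0}(t)\|_E \le \ell\|v_+(t)\|_E$, and hence $\|v(t)\|_\HH \simeq \|v_+(t)\|_E$ uniformly in $t$ (for $\ell$ small).

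The remaining task is to show that $\|v_+(t)\|_E$ decays backward at rate approaching $\uk$. From $\|v_+\|_E^2=\sum_{k\in K}|\la_{k+}|^2$ and $\p_t\la_{k+}=k\la_{k+}+P_{+k}M_\de(v)$, Cauchy--Schwarz yields
\EQ{
\tfrac{d}{dt}\|v_+\|_E^2 \ge 2\uk\|v_+\|_E^2 - C\|v_+\|_E\,\|M_\de(v)\|_\HH.
}
Since $\chi_\de$ vanishes for $\|v\|_\HH>\sqrt{2}\,\de$, in that linear regime $\|v_+(t)\|_E$ decays backward at rate exactly $\uk$, so in finite backward time $v$ enters the nonlinear ball. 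Once there, $\|M_\de(v)\|_\HH\lesssim\|v\|_\HH^2\simeq\|v_+\|_E^2$, and with $\tau:=-t$ the inequality reads $\tfrac{d}{d\tau}\|v_+(-\tau)\|_E^2\le -(2\uk-C\|v_+(-\tau)\|_E)\|v_+(-\tau)\|_E^2$. Since $\|v_+(-\tau)\|_E$ is already decaying, for any $\e>0$ we eventually have $C\|v_+(-\tau)\|_E\le\e$, and integration of the inequality then gives $\|v_+(-\tau)\|_E\lesssim e^{-(\uk-\e/2)\tau}$, whence $e^{(\uk-\e)\tau}\|v(-\tau)\|_\HH\lesssim e^{-\e\tau/2}\to 0$.

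The principal obstacle is the self-referential rate $\uk-C\|v_+\|_E$ in the nonlinear regime, which approaches the optimal $\uk$ only asymptotically; the $o$-formulation $e^{-(\uk-\e)t}\|\cdot\|_\HH\to 0$ (rather than $O(e^{\uk t})$) in the statement is precisely what allows this loss to be absorbed by the $\e/2$-trick above. The transition between the linear and nonlinear regimes is smooth because both argue in the same direction (backward decay), and invariance of the manifold guarantees the equivalence $\|v\|_\HH\simeq\|v_+\|_E$ that powers both steps.
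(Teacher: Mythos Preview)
Your existence/uniqueness argument follows the template of Theorem~\ref{mfd loc eq} exactly and is correct.

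For the exponential decay your route differs from the paper's. The paper does not run an ODE on $\|v_+\|_E^2$; instead it recycles the discrete-time estimate $\|v_+^0(0)\|_E\lesssim e^{-\uk T/2}\|\psi\|_E$ from~\eqref{long-time est} in the preceding contraction lemma, which upon iteration yields backward decay at rate $\uk/2$. The paper then observes that the exponent in that estimate can be pushed arbitrarily close to $\uk$ by shrinking $\de$, and since the solution on $\gr{G_*^+}$ eventually enters any $\de$-ball as $t\to-\infty$, one may reapply the estimate with a smaller effective $\de$ at late backward times to obtain~\eqref{uk eps}. Your continuous ODE argument is more self-contained and avoids revisiting the proof of the contraction lemma; the paper's argument reuses machinery already in place. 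Both approaches share the key self-improving mechanism (the rate approaches $\uk$ once the solution is small), and both need the equivalence $\|v\|_\HH\simeq\|v_+\|_E$ on the graph, which you correctly derive from the Lipschitz bound on $G_*^+$.

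Two minor corrections to your write-up. First, $M_\de(v)$ is generally not in $\HH$ (the term $A(v)\cdot\na v_2$ lies only in $H^{-1}$, and $N(v)\notin L^2$ for large $d$), so ``$\|M_\de(v)\|_\HH$'' is not the right object; what your argument actually requires and what holds is $|P_{+k}M_\de(v)|=|\om(M_\de(v),g_{k-})|\lesssim\|v\|_\HH^2$ in the nonlinear regime, valid because $g_{k-}$ is Schwartz and absorbs derivatives. Second, the identity $\tm(0,w)=\|w\|_E$ is not exact since the $E$-norm is not translation-invariant; you have $\tm(0,w)\le\|w\|_E$ from $q=0$, and the reverse inequality (up to a constant) from the analogue of~\eqref{est on mob} for $\tm$. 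Either form suffices for $\|v_{\le0}\|_E\lesssim\ell\|v_+\|_E$.
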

\begin{proof}
The estimate \eqref{uk eps} follows from the previous proof. In fact, \eqref{long-time est} implies 
\EQ{
 \|v(t)\|_E \lec e^{\frac{\uk}{2} t}\|v(0)\|_E \pq(t\to\I),}
where $v(t):=U(t)v(0)$, but we could take the exponent arbitrarily close to $\uk$ by choosing $\de$ even smaller. Since $v(t)$ comes into any $\de$ ball as $t\to-\I$, we may apply such decay estimates for $t$ sufficiently close to $-\I$, thereby deducing \eqref{uk eps} for all $\e>0$. 
\end{proof}

\section{The unstable manifold}

We now describe the unstable manifold in the original $u$-formulation of the equation, see~\eqref{NLKG}. 
Let $G_*^+$ be as in Theorem~\ref{u mfd loc eq}. For any $v(0)\in \gr{\G_{\ell,\de}^+}$ with $\| v(0)\|_{\HH} < \de$, define $v(t)=U(t) v(0)$, 
\EQ{\label{u from v}
u(t) = (\vec Q+v)(t,\cdot-c(t)), \quad c(0)=c_0, \quad \dot c(t)=A(v(t))
}
where $c_0\in\R^d$ is a fixed vector. 
By construction, $u$ solves \eqref{NLKG}, and by~\eqref{uk eps} one has $\dot c(t)\to0$ and $c(t)\to c(-\I)$ exponentially fast
as $t\to-\I$. 
In particular,   $u$ has vanishing momentum: $P(u)=0$. Then by design (cf.~\eqref{def Hperp}--\eqref{eq1 v}), $\omega(v, J\nabla\vec Q)$ is constant, and since it converges to zero
as $t\to-\I$, must vanish.  To summarize, we have obtained the following characterization of the unstable manifold. 

\begin{cor}
The unstable manifold $\M_u$ is the set of all $\vec u(0)$ with $u$ defined in terms of $G_*^+$  by means of~\eqref{u from v}. 
$\M_u$ is invariant in backward time, and all solutions starting in $\M_u$ converge to a trajectory of the form $\vec Q(\cdot -c(t))$ exponentially
fast as $t\to-\I$, with $\dot c(t)\to0$ as $t\to-\I$ exponentially fast. $\M_u$ is a Lipschitz manifold of dimension $K+d$. 
\end{cor}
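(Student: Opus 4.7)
The plan is to verify each claim in sequence, using the unstable manifold $\gr{G_*^+}$ from Theorem~\ref{u mfd loc eq} as the starting point and then unwinding the modulation to recover a genuine solution of NLKG.

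The first step is to check that $u$ defined by~\eqref{u from v} actually solves~\eqref{NLKG}, and then to verify the two orthogonality conditions defining $\HH_\perp$ in~\eqref{def Hperp}. The function $v(t)=U(t)v(0)\in C(\R;\HH)$ satisfies $v_t=J\LL v+M_\de(v)$, so the change of frame $u(t,x)=(\vec Q+v)(t,x-c(t))$ with $\dot c=A(v)$ transforms this equation back into NLKG on the region where $\chi_\de(v)=1$. The key input from~\eqref{uk eps} is that $\|v(t)\|_\HH\to 0$ exponentially as $t\to-\I$, so $\chi_\de(v(t))\equiv 1$ on some half-line $(-\I,T_0]$; forward in time, one enforces $\|v(t)\|_\HH\le\de$ by starting in a sufficiently small sub-ball and using Lemma~\ref{lem:bd v} together with the graph invariance $U(t)\gr{G_*^+}=\gr{G_*^+}$ to rule out escape. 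With the localization inactive, $u$ solves NLKG, so $P(u)$ is conserved; but $P(u)=\om(v,\na\vec Q)+\om(v,\na v)/2\to 0$ as $t\to-\I$, whence $P(u)\equiv 0$. With vanishing momentum, the modulation equation reduces to~\eqref{eq1 v}, under which $\om(v,J\na\vec Q)$ is also constant in $t$, and the $-\I$ limit forces it to vanish. Hence $v(t)\in\HH_\perp$ for all $t$. Since $\dot c(t)=A(v(t))$ is quadratic in $v$ it decays exponentially, and integrating from $-\I$ yields $c(-\I):=\lim_{t\to-\I}c(t)$ with the claimed asymptotic $\vec u(t)\to\vec Q(\cdot-c(-\I))$ exponentially fast.

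Next I would address the manifold structure. The graph $\gr{G_*^+}$ sits over $\HH_+=P_+\HH$ as a Lipschitz graph of dimension $K$. The parametrization $(\fy,c_0)\in\gr{G_*^+}\times\R^d\mapsto(\fy_1+Q,\fy_2)(\cdot-c_0)\in\HH$ is Lipschitz in $\fy$ by construction, and Lipschitz in $c_0$ thanks to the mobile quasi-distance $\mb$ introduced in Proposition~\ref{prop mb}: translation becomes Lipschitz in $c_0$ in the $\mb$-metric, which is precisely the reason that distance was defined. Composing these two Lipschitz parametrizations yields $\M_u$ as a Lipschitz manifold of dimension $K+d$. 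Backward invariance of $\M_u$ under the NLKG flow follows from $U(t)\gr{G_*^+}=\gr{G_*^+}$ for all $t\in\R$ (Theorem~\ref{u mfd loc eq} together with the invertibility of $U(t)$), combined with the compatibility of~\eqref{u from v} with time translation: if $\vec u(0)\in\M_u$ corresponds to $(v(0),c_0)$, then $\vec u(s)$ corresponds to $(v(s),c(s))$, which again lies in $\gr{G_*^+}\times\R^d$.

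The hard part, in my view, is ensuring that no localization artifact contaminates the definition of $\M_u$: one must guarantee that the cut-off $\chi_\de$ is inactive along the whole backward trajectory, not merely near $-\I$, so that the NLKG flow and the localized flow agree on the set used to parametrize $\M_u$. Once the decay~\eqref{uk eps} and a sufficiently small initial ball secure this, the remainder is bookkeeping, with the mobile distance $\mb$ doing the key work of making spatial translation a Lipschitz group action so that the $\R^d$-factor attaches to $\gr{G_*^+}$ to form a Lipschitz manifold of dimension $K+d$.
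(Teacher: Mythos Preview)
Your proposal is correct and follows essentially the same route as the paper: define $u$ from $v\in\gr{G_*^+}$ via~\eqref{u from v}, use the exponential decay~\eqref{uk eps} to conclude $\dot c(t)\to0$ and $c(t)\to c(-\infty)$, deduce $P(u)=0$ and then $\om(v,J\na\vec Q)=0$ from conservation plus the vanishing limit, and finally count dimensions as $K$ from the graph plus $d$ from the translation parameter~$c_0$. Your additional care about the cut-off being inactive on the whole backward orbit and about the Lipschitz dependence on~$c_0$ via the mobile distance are points the paper leaves implicit; they are correct and do no harm, though note that the monotone backward decay of $\|v_+(t)\|_E$ on the invariant graph (implicit in~\eqref{long-time est}) already prevents any spike above~$\de$ for $t\le0$ once $\|v(0)\|_\HH$ is taken small enough.
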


The dimension count is a result of the fact that $\gr{G_*^+}$ is of dimension~$K$, and the translations (see~$c_0$ in~\eqref{u from v}) add  another $d$
dimensions.

\section{Trapping property of the center-stable manifold}
\subsection{Restriction by the orthogonality}
For any Banach space $X$, denote the ball around $0$ of radius $R>0$ by 
\EQ{ \label{def BR}
 B_R(X):=\{\fy\in X\mid \|\fy\|<R\}.}
\begin{lem} \label{lem:tiG}
If $\ell\le 1$, and $\de>0$ is small enough (depending only on $d,f,Q$), then for any $G\in\G_{\ell,\de}$, there is a unique map $\ti G:P_{\ga+}B_\de(\HH)\to P_{0-}\HH$ such that 
\EQ{
 \gr{\ti G}:\pt=\{\psi+\ti G(\psi)\mid \psi\in P_{\ga+}B_\de(\HH)\}
 \pr=\{\fy\in\gr{G}\cap\HH_\perp \mid P_{\ga+}\fy\in P_{\ga+}B_\de(\HH), |P_\nu\fy|<\de\}.}
Moreover, $\ti G$ is Lipschitz continuous in the mobile distance $\tm$. 
\end{lem}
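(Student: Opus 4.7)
The plan is to obtain $\ti G(\psi)$ as the unique fixed point of a finite-dimensional contraction. Given $\psi\in P_{\ga+}B_\de(\HH)$, parametrize any candidate $\fy\in\gr{G}\cap\HH_\perp$ near $0$ as $\fy=\psi+\xi$ with $\xi=\mu\cdot\na\vec Q+\nu\cdot J\na\vec Q+v_-\in P_{0-}\HH$. The defining constraints of $\gr{\ti G}$ then split into three relations on $\xi$: (i) the graph relation $v_-=G(\psi+\xi)$; (ii) $\mu=0$, which is equivalent to $\om(\fy,J\na\vec Q)=0$ by \eqref{spec decp}; and (iii) $\nu=H(Q)^{-1}\om(\fy,\na\fy)/2$, which reformulates the momentum orthogonality in \eqref{def Hperp}. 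Accordingly, define $\Phi_\psi:P_{0-}\HH\to P_{0-}\HH$ whose $v_-$-component equals the right-hand side of (i), whose $\mu$-component vanishes, and whose $\nu$-component equals the right-hand side of (iii), all evaluated at $\fy=\psi+\xi$. Then $\psi+\xi_*$ lies in $\gr{\ti G}$ iff $\xi_*=\Phi_\psi(\xi_*)$, and one sets $\ti G(\psi):=\xi_*$.

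To apply Banach's theorem on a suitably small closed $E$-ball in $P_{0-}\HH$, note that $P_{0-}\HH$ is finite-dimensional and annihilated by $P_\ga$, so $\tm$ restricted to it reduces to the $E$-norm. Self-mapping follows from $G(0)=0$, $\|\psi\|_\HH<\de$, the Lipschitz bound on $G$, and the quadratic nature of (iii). Contractivity holds with ratio $\ell+O(\de)<1$: the $v_-$-component obeys $\|G(\psi+\xi)-G(\psi+\xi')\|_E\le\ell\,\tm(\psi+\xi,\psi+\xi')\le\ell\|\xi-\xi'\|_E$ by taking $q=0$ in \eqref{def mobile}, and the $\nu$-component is a quadratic form in $\fy$ with Lipschitz constant $O(\de)$ on the $\de$-ball, by the same computation that gave the bound on $A(v)$ in \eqref{est diff H-2}. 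Uniqueness in the sense of the lemma is automatic: any $\fy\in\gr{G}\cap\HH_\perp$ meeting the stated smallness conditions is a fixed point of $\Phi_{P_{\ga+}\fy}$ inside the ball, hence coincides with $\psi+\ti G(\psi)$.

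For the Lipschitz-in-$\tm$ dependence of $\ti G$ on $\psi$, subtract the two fixed-point identities and split
\EQ{
\diff\ti G(\psi^\pa) \pt= [\Phi_{\psi^1}(\ti G(\psi^1))-\Phi_{\psi^1}(\ti G(\psi^0))]
 \prq + [\Phi_{\psi^1}(\ti G(\psi^0))-\Phi_{\psi^0}(\ti G(\psi^0))].}
The first bracket is bounded by $(\ell+O(\de))\|\diff\ti G(\psi^\pa)\|_E$ by the contraction estimate of the previous paragraph. The second is bounded by $(\ell+O(\de))\,\tm\psi^\pa$, using (a) $G\in\G_{\ell,\de}$ applied to the pair $(\psi^0+\xi,\psi^1+\xi)$, whose $\tm$-distance equals $\tm\psi^\pa$ since the common $\xi$ contributes no $\ga$-component, together with (b) the $\tm$-Lipschitz control $|\diff[\om(\fy^\pa,\na\fy^\pa)]|\lec\de\,\tm\fy^\pa$ and the analogous control of $\om(\fy,\na\vec Q)$ and $\om(\fy,J\na\vec Q)$: these hold because $\na\vec Q$ and $J\na\vec Q$ are smooth and exponentially decaying, so a $\ta_q$-translation contributes only $O(|q|)\lec\tm\fy^\pa$ to the pairing, exactly as in \eqref{est diff H-2}. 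Absorbing the first bracket to the left yields $\|\diff\ti G(\psi^\pa)\|_E\lec\tm\psi^\pa$, which is the required Lipschitz estimate in $\tm$.

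The main obstacle I anticipate is precisely this last step: the mobile distance permits a horizontal translation of $\psi^0$ against $\psi^1$ that cannot be matched by the rigidly embedded components $\ti G(\psi^j)\in P_{0-}\HH$. The resolution, as in Lemma~\ref{lem:diff v}, is that $\na\vec Q$ and the eigenmodes $\rh_k$ spanning $P_{0-}\HH$ are smooth and exponentially localized, so the mismatch under a translation of size $|q|\lec\tm\psi^\pa$ is of order $O(|q|)$ in $\HH$ and is reabsorbed into $\tm$ itself.
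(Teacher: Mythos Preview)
Your fixed-point scheme is close to the paper's but iterates on a larger space, which creates a gap at the endpoint $\ell=1$ permitted by the hypothesis. Because you include $v_-$ among the unknowns, the $v_-$-component of $\Phi_\psi$, namely $\xi\mapsto G(\psi+\xi)$, already has Lipschitz constant $\ell$ in $\|\xi-\xi'\|_E$; your claimed contraction rate $\ell+O(\de)$ (or even the sharper $\sqrt{\ell^2+O(\de^2)}$ coming from the $E$-norm splitting) is therefore not $<1$ when $\ell=1$. The self-map estimate fails for the same reason: $\|G(\psi+\xi)\|_E\le\ell\|\psi+\xi\|_E$ cannot be squeezed into a small $E$-ball if $\ell=1$ and $\psi\neq 0$.

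The paper avoids this by not iterating in $v_-$ at all. It sets $\ti\psi(\nu):=\psi+\nu\cdot J\na\vec Q$ and $\ti\fy(\nu):=\ti\psi(\nu)+G(\ti\psi(\nu))$, and then seeks a fixed point of
\EQ{
 \NN(\nu):=H(Q)^{-1}\om(\ti\fy(\nu),\na\ti\fy(\nu))/2}
on $B_\de(\R^d)$ alone. Since $G$ is merely \emph{evaluated} here (with $\|\diff G(\ti\psi(\nu^\pa))\|_E\le\ell|\diff\nu^\pa|$ feeding into a quadratic form of size $O(\de)$), one gets $|\diff\NN(\nu^\pa)|\lec(1+\ell)\de|\diff\nu^\pa|\lec\de|\diff\nu^\pa|$ uniformly for $\ell\le 1$, and $|\NN(\nu)|\lec\de^2$ for the self-map. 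After finding $\nu_*$, one reads off $\ti G(\psi)=\nu_*\cdot J\na\vec Q+G(\ti\psi(\nu_*))$. Your argument becomes correct if you substitute $v_-=G(\psi+\nu\cdot J\na\vec Q)$ at the outset and iterate only in $\nu$; alternatively, your scheme works as written whenever $\ell<1$ strictly, which covers all the applications in the paper (where $\ell\ll 1$ by \eqref{cond el-de}). Your treatment of the Lipschitz-in-$\tm$ dependence of $\ti G$ on $\psi$ is correct and more detailed than the paper's one-line remark that $\NN$ is Lipschitz in $\psi$ for the mobile distance.
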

\begin{proof}
For any $\psi\in\HH$ and $\nu\in\R^d$, put 
\EQ{
 \ti\psi(\nu):=\psi+\nu\cdot J\na\vec Q, \pq \ti\fy(\nu):=\ti\psi(\nu)+G(\ti\psi(\nu)).} 
It suffices to show that for any $\psi\in P_{\ga+}B_\de(\HH)$, there is a unique fixed point $\nu\in B_\de(\R^d)$ for the map 
\EQ{
 \nu\mapsto \NN(\nu):=H(Q)^{-1}\om(\ti\fy(\nu),\na\ti\fy(\nu))/2.}
For any $\nu^j\in B_\de(\R^d)$ ($j=0,1$), we have 
\EQ{
 \diff{\ti\fy}(\nu^\pa)=\diff\nu^\pa\cdot J\na\vec Q+\diff G(\psi+\nu^\pa\cdot J\na\vec Q).}
Hence using $\|\psi\|_\HH<\de$ and $G\in\G_{\ell,\de}$ as well, we deduce that 
\EQ{
 |\NN(\nu)|\lec\|\ti\fy(\nu)\|_\HH^2 \lec \de^2, \pq
 |\diff\om(\ti\fy(\nu^\pa),\na\ti\fy(\nu^\pa))| \lec \de|\diff\nu^\pa|.}
Therefore $\NN:B_\de(\R^d)\to B_\de(\R^d)$ is a contraction for small $\de,\ell>0$, and so has a unique fixed point $\nu=\NN(\nu)\in B_\de(\R^d)$. 
Since $\NN$ is Lipschitz for $\psi$ in the mobile distance, so is the fixed point $\nu(\psi)$, as well as $\ti G$. 
\end{proof}
Hence $\gr{G}\cap\HH_\perp$ is a Lipschitz manifold in the mobile distance around $0$ with codimension $K+2d=\dim P_{0-}\HH$. 

\subsection{Solutions on the center-stable manifold with the orthogonality}
Let $G=G_*\in\G_{\ell,\de}$ be the map for the center-unstable manifold of the localized equation given by Theorem \ref{mfd loc eq}, and let $\ti G=\ti G_*$ be the map for its orthogonal restriction given by the above lemma. The invariance of $\gr{G}$ means that for any $v(0)\in\gr{G}$, $v(t):=U(t)v(0)$ stays on $\gr{G}$ for all $t\in\R$. Let $c(t)$ be the solution of 
\EQ{ \label{eq c}
 c(0)=0, \pq \dot c(t)=A(v(t)),}
and $u(t)=(\vec Q+v)(t,x-c(t))$. If $v(0)\in B_\de(\HH)$, then $u$ solves the original equation \eqref{NLKG} as long as $v(t)\in B_\de(\HH)$. Meanwhile, the momentum $P(u)=\om(v,\na\vec Q)+\om(v,\na v)/2$ is preserved, and so is $\om(v,J\na\vec Q)$, because of $\dot c=A(v)$. Hence if $v(0)\in\HH_\perp\cap B_\de(\HH)$, then $v(t)$ remains there as long as $v(t)\in B_\de(\HH)$. 

To see that the solution stays in the neighborhood for $t<0$, expand the conserved energy by $u=(\vec Q+v)(x-c)$
\EQ{ \label{energy expand}
 E(u)-J(Q) = -\sum_{k\in K}k\la_{k+}\la_{k-} + \frac 12\LR{\LL\ga|\ga} - C(v),}
where the nonlinear energy $C$ is defined by
\EQ{ \label{def C}
 C(v) := f(Q+v_1) - f(Q) - \LR{f'(Q)|v_1} - \frac 12\LR{f''(Q)v_1|v_1} = o(\|v\|_\HH^2).}

Suppose that for some $t_0<0$ 
\EQ{
 \|v(0)\|_\HH=\e\ll\de, 
 \pq \max_{t_0\le t\le 0}\|v(t)\|_\HH<\de, 
 \pq \|v(t_0)\|_\HH\gg\sqrt\ell\de+\e(\ll \de).} 
Then $|E(u)-J(Q)|\simeq\e^2$. Since $v(t)\in\gr{G}$ and $G\in\G_{\ell,\de}$, \eqref{energy expand} implies that 
\EQ{ \label{ga smallness}
 \LR{\LL\ga|\ga}\lec \sum_k  k |\la_{k+}\la_{k-} | + E(u)-J(Q) \lec \ell\de^2+\e^2 \ll \de^2,}
for $t_0\le t\le 0$, which together with $v(t)\in\HH_\perp$ implies
\EQ{
 \sum_k|\la_{k+}(t_0)|^2 \simeq \|v(t_0)\|_\HH^2.} 
Now consider the nonlinear energy functional 
\EQ{
 \E(v):\pt=E(u)-J(Q)+\sum_{k\in K}\frac{k}{2}(\la_{k+}+\la_{k-})^2
 \pr=\sum_{k\in K}\frac{k}{2}(\la_{k+}^2+\la_{k-}^2)+\frac12\LR{\LL\ga|\ga}-C(v).}
Since $v(t)\in\HH_\perp$, we have $\E(v)\simeq\|v\|_\HH^2$, and moreover, the equation of $\la_{k\pm}$, see~\eqref{eq v in spec}
 together with conservation of $E(u)$ yields 
\EQ{
 \frac{d}{dt}\E(v)=\sum_{k}k^2(\la_{k+}^2-\la_{k-}^2)+o(\|v\|_\HH^2|\la_{k\pm}|),}and so 
\EQ{
 \frac{d}{dt}\E(v(t_0)) \gec \sum_k k^2\la_{k+}^2 \simeq \E(v(t_0)).}
Therefore $\E(v(t))$ cannot increase beyond $O(\ell\de^2+\e^2)$ as $t<0$ decreases. 

In conclusion, for any $v(0)\in\gr{\ti G_*}$ such that $\|v(0)\|_\HH\ll\de$, the solution $v(t)=U(t)v(0)$ remains on $\gr{\ti G_*}$, $\|v(t)\|_\HH\ll\de$ for all $t<0$ and $u(t)=(\vec Q+v)(x-c)$ with $\dot c=A(v)$ solves the original equation \eqref{NLKG} for all $t<0$. Thus we obtain a center-unstable manifold of the original equation with zero total momentum. 

More precisely, fix $0<\de'\ll\de$ and let 
\EQ{
 \M_{cu,0}\pt:=\{(\vec Q+U(t)\fy)(x-c) \mid \fy\in\gr{\ti G_*},\ \|\fy\|_\HH<\de',\ t< 0,\ c\in\R^d\},}
then for any initial data $u(0)=(\vec Q+U(t)\fy)(x-c)\in\M_{cu,0}$, the solution $u(t)$ of \eqref{NLKG} is on $\M_{cu,0}$ for all $t\le 0$ and $P(u(t))=0$. Moreover, $U(t)\fy\in\gr{\ti G_*}$ and $\|U(t)\fy\|_\HH\lec\de'$ for all $t\le 0$. The nonlinear projection 
\EQ{
 P_\perp:\HH_0\ni u\mapsto v\in\HH_\perp;\pq u=(\vec Q+v)(x-c)}
is uniquely defined in a neighborhood of the translation family of stationary solutions 
\EQ{ \label{def S0Q}
 \Sm_0(Q):=\{\vec Q(x-q)\}_{q\in\R^d} \subset \HH_0,}
by solving the equation 
\EQ{
 0=\om(v,J\na\vec Q)=\om(u(x+c),J\na\vec Q)=\om(u,J\na\vec Q(x-c)).}
Indeed it can be solved locally by the implicit function theorem, since if $u=\vec Q(x-c_0)+\psi$, $\|\psi\|_\HH\lec\de$ then 
\EQ{
  \na_c\om(u,J\na\vec Q(x-c))
 \pt=-\om(\vec Q(x-c_0)+\psi,J\na^2\vec Q(x-c))
 \pr=H(Q)+O(|c-c_0|+\de).}
Since the mapping $u\mapsto c$ thereby defined is smooth, the map $u\mapsto(v,c)$ is (locally) bi-Lipschitz in the mobile distance from $\HH_0$ to $\HH_\perp\oplus\R^d$. Since $\M_{cu,0}$ is mapped onto a $0$-neighborhood of $\gr{\ti G_*}\oplus\R^d$, the codimension of $\M_{cu,0}$ in $\HH_0$ is equal to that of $\gr{\ti G_*}$ in $\HH_\perp$, which is $K$.   

Thus we have obtained a center-unstable manifold $\M_{cu,0}$ of $\Sm_0(Q)$ in $\HH_0$ with codimension $K$. Its time inversion 
\EQ{
 \overline{\M_{cu,0}}=:\M_{cs,0}\subset\HH_0} 
is a center-stable manifold of $\Sm_0(Q)$.  

\subsection{Lorentz extension of the center-stable manifold}
Using the Lorentz transform 
\EQ{
 u(t,x) \mapsto u_p:=u(\LR{p}t+p\cdot x,x+p(\LR{p}-1)|p|^{-2}p\cdot x+tp) \pq (p\in\R^d),}
we can further extend $\M_{cs,0}$ to a manifold $\M_{cs}$ of codimension $K$, around the soliton manifold $\Sm(Q)$. Indeed, \eqref{NLKG} is invariant for any Lorentz transform, while the total energy and momentum are transformed 
\EQ{
 \pt E(u_p) = E(u)\LR{p}+P(u)\cdot p, 
 \pq P(u_p) = P(u)\LR{p}+E(u)p.}
$E^2-|P|^2$ is invariant, which is positive around $\Sm(Q)$. Hence there is a unique $p\in\R^d$ for each solution $u$ near the traveling waves, such that $P(u_p)=0$ and $E(u_p)=\sqrt{E(u)^2-|P(u)|^2}$. 

However, one needs to be more careful because the Lorentz transform mixes space-time and a solution from $M_{cs,0}$ may not be global in the negative time. Indeed, from~\cite{NLKGrad,NLKGnonrad} we know that ``half'' of the solutions on $\M_{cs,0}$ (namely, as given by the separating surface~$\M_{cu,0}$) blow up in  negative time, at least when $\vec Q$ is the ground state and $f(u)=|u|^{p+1}$, $p>1+4/d$. 

The local wellposedness implies that for any $T>0$ there is $\de>0$ such that for any initial data within distance $\de$ from $\Sm_0(Q)$, the solution extends at 
least for times $|t|<T$. The exponential decay of $Q$ implies that for any $\de>0$ there is $R>0$ such that for any such initial data, the free energy in the exterior region $|x-q|>R$ is less than $O(\de^2)$ for some $q\in\R^d$. The local wellposedness, the conservation of the energy and the Sobolev inequality  implies that every solution with small initial free energy is global, keeping the same size of free energy for all time. Hence the finite speed of propagation of the free Klein-Gordon equation implies that for small $\de>0$, every solution with  free energy $O(\de^2)$ in $|x-q|>R$ at $t=0$ is extended to the whole exterior cone $|x-q|>R+|t|$ with the same size of the free energy on any time slice of it. 

Thus in conclusion, there is $R>0$ and $\de(T)>0$ for any $T>0$ such that every solution starting on $\M_{cs,0}$ and 
within distance $\de$ from $\Sm_0(Q)$ is extended to the space-time region 
\EQ{
 \{(t,x)\in\R^{1+d} \mid t>-T \text{ or } |x-q|>R+|t|\},}
for some $q\in\R^d$. For any Lorentz transform~$L$, there is $T>0$ such that the image of the above region under~$L$ contains $\{(t,x)\mid t\ge 0\}$. In other words, the image of any solution on $\M_{cs,0}$ close to $\Sm_0(Q)$ is extended to a forward global solution. The invariance of the solution set of $\M_{cs,0}$ for the space and backward time translations is also inherited by the image, because such a translation of the Lorentz transform is the Lorentz transform of another translation. 
It is also easy to see that these  solution remains close to the corresponding traveling wave. 

However, it seems difficult to make the above argument uniform with respect to the Lorentz transform: the larger the momentum~$p$,  the smaller the neighborhood of $\Sm_0(Q)$ needs to be chosen. This is why the resulting manifold is not strictly Lorentz invariant, but only within a neighborhood of $\Sm(Q)$ depending on the Lorentz transform (but the neighborhood can be chosen uniformly for $p$ in compact sets). 

Thus we obtain a center-stable manifold $\M_{cs}$ of the soliton manifold $\Sm(Q)$. $\M_{cs}$ can be identified with the set of forward global solutions starting from it, where each solution is characterized uniquely by the total momentum and its Lorentz transform with $0$-momentum starting from $\M_{cs,0}$. In this way\footnote{The general solution close to $\Sm(Q)$ may well blow up in both time directions, but the smaller neighborhood yields the bigger lower bound on the existence time, which is sufficient for the construction of this bi-Lipschitz map.}, 
we can define a bi-Lipschitz map from a neighborhood of $\Sm(Q)$ in $\HH$ to a neighborhood of $\Sm_0(Q)\oplus\R^d$ in $\HH_0\oplus\R^d$. Since it maps $\M_{cs}$ onto the intersection of $\M_{cs,0}\oplus\R^d$ with a neighborhood of $\Sm_0(Q)\oplus\R^d$, the codimension of $\M_{cs}$ in $\HH$ is also $K$.

\subsection{Solutions off the center-stable manifold}
It remains to describe the dynamics off the manifold, or more specifically, the {\em repulsive
property} of the center-unstable manifold in negative time. 
For this we need some  sort of opposite to Lemma \ref{lem:diff inv}: 
\begin{lem}
If $\ell,\de>0$ satisfy 
\EQ{ \label{cond3 el-de}
 \de \ll \ell \uk,}
then for any two solutions $v^j=U(t)v^j(0)$ ($j=0,1$) satisfying 
\EQ{
 \max(\|\diff v_{0+}^\pa(0)\|_E,\tm \ga^\pa(0)) \le \ell\|\diff v_-^\pa(0)\|_E,}
one has 
\EQ{ 
 \max(\|\diff v_{0+}^\pa(t)\|_E,\tm \ga^\pa(t)) \le \CAS{2\ell\|\diff v_-^\pa(t)\|_E &(-1/2< t\le 0),\\ 
  \ell\|\diff v_-^\pa(t)\|_E &(-1\le t\le -1/2),}}
and 
\EQ{
 \|\diff v_-^\pa(t)\|_E \ge \CAS{ \frac 12e^{-\uk t/2}\|\diff v_-^\pa(0)\|_E &(-1/2<t\le 0), \\ e^{-\uk t/2}\|\diff v_-^\pa(0)\|_E &(-1\le t\le -1/2).}} 
\end{lem}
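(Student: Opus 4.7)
The plan is to mirror the proof of Lemma \ref{lem:diff inv}, but now exploiting the fact that in backward time the unstable subspace $P_-$ becomes the fastest-growing mode while $P_+$ contracts and $P_0$ grows only slowly. The key input is again Lemma \ref{lem:diff v}: on $|t|\le 1$, the discrete part of $\diff v^\pa(t)$ differs from the linearized flow $e^{J\LL t}\diff v^\pa(0)$ by at most $C\de\,\tm v^\pa(0)$, and the $\tm$-increment on $\ga^\pa$ is of the same size.

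First I would observe that the cone hypothesis at $t=0$ forces $\tm v^\pa(0) \lec \|\diff v_-^\pa(0)\|_E$, since $\|\diff v_{0+}^\pa(0)\|_E$ and $\tm\ga^\pa(0)$ are both bounded by $\ell\|\diff v_-^\pa(0)\|_E$ with $\ell\le 1$. This lets me replace $\tm v^\pa(0)$ by $\|\diff v_-^\pa(0)\|_E$ throughout the nonlinear error terms. Combining this with the linearized spectral inequalities \eqref{est linearized}, and noting that for $t\le 0$ we have $\min(e^{-\uk t},e^{-\ok t})=e^{-\uk t}$ on $P_-$ and $\max(e^{\uk t},e^{\ok t})=e^{\uk t}\le e^{\ka|t|}$ on $P_{+}$, yields
\EQ{
 \pt \|\diff v_-^\pa(t)\|_E \ge (e^{-\uk t} - C\de)\|\diff v_-^\pa(0)\|_E,
 \pr \|\diff v_{0+}^\pa(t)\|_E \le (e^{\ka|t|}\ell + C\de)\|\diff v_-^\pa(0)\|_E,
 \pr \tm\ga^\pa(t) \le (\ell + C\de)\|\diff v_-^\pa(0)\|_E.
}

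Taking ratios then gives both cone bounds. On the short interval $-1/2<t\le 0$, the exponential factors only come close to $1$, so the relaxed constant $2\ell$ is needed to absorb the nonlinear error $C\de/\ell$; this works because $\de\ll\ell\uk\le\ell$. On $-1\le t\le -1/2$, the spectral gap $\uk-\ka$ produces a genuine margin of order $\uk$ between the lower bound $e^{-\uk t}\ge e^{\uk/2}$ on the $P_-$ factor and the upper bounds $e^{\ka|t|}\le e^\ka$ (or $1$ for $\ga$) on the other modes, and the hypothesis $\de\ll\ell\uk$ is precisely what allows this margin to absorb $C\de$ and recover the sharp cone constant $\ell$. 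The growth lower bound on $\|\diff v_-^\pa(t)\|_E$ then follows from the first line above: a direct calculation gives $e^{-\uk t}-C\de\ge\tfrac12 e^{-\uk t/2}$ on $(-1/2,0]$ and $e^{-\uk t}-C\de\ge e^{-\uk t/2}$ on $[-1,-1/2]$, both consequences of $\de\ll\uk$ (which follows from \eqref{cond3 el-de} together with $\ell\le 1$).

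The main challenge is purely bookkeeping, not analysis: there is no new analytic content beyond Lemma \ref{lem:diff v} and the spectral bounds \eqref{est linearized}. The only subtle point is verifying that the single condition $\de\ll\ell\uk$ is strong enough to absorb all nonlinear errors uniformly in $|t|\le 1$ while tightening the cone constant from $2\ell$ back to $\ell$ on the longer subinterval; this works because a half-unit of backward time is long enough for the $\uk-\ka$ spectral gap to create a margin of order $\uk$ in the ratio of the good to bad modes, and $\de/\ell\ll\uk$ is exactly the quantitative smallness required for this margin to dominate the nonlinear error.
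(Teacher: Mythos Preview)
Your proposal is correct and follows essentially the same route as the paper: both arguments reduce to the observation $\tm v^\pa(0)\simeq\|\diff v_-^\pa(0)\|_E$, then apply Lemma~\ref{lem:diff v} together with the spectral bounds \eqref{est linearized} to obtain the three key inequalities on $\|\diff v_-^\pa(t)\|_E$, $\|\diff v_{0+}^\pa(t)\|_E$, and $\tm\ga^\pa(t)$, and finally take ratios with the condition $\de\ll\ell\uk$ and $\ka\ll\uk$. Your discussion of the two subintervals and the growth lower bound is in fact slightly more explicit than the paper's, which compresses the final step into a single line.
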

\begin{proof}
Let $\ti m(0):=\|\diff v_-^\pa(0)\|_E\simeq\tm v^\pa(0)$. Lemma \ref{lem:diff v} implies that 
\EQ{
 \pt\|\diff v_{0+}^\pa(t)\|_E  \le (e^{-\ka t}\ell + C\de)\ti m(0),
 \pq\|\diff v_-^\pa(t)\|_E \ge (e^{-\uk t}-C\de)\ti m(0), 
 \pr\tm \ga^\pa(t) \le (\ell + C\de)\ti m(0), }
for $-1\le t\le 0$. Hence
\EQ{
 \max(\|\diff v_{0+}^\pa(t)\|_E,\tm \ga^\pa(t))
 \pt\le (e^{-\ka t}\ell + C\de)(e^{-\uk t}-C\de)^{-1}\|\diff v_-^\pa(t)\|_E
 \pr\le e^{(\uk-\ka)t}(\ell+C\de)\|\diff v_-^\pa(t)\|_E,}
and the conclusion follows from \eqref{cond3 el-de} as well as $\ka\ll\uk$. 
\end{proof}

Let $v^0(0)\in B_\de(\HH)\cap\HH_\perp\setminus\gr{\ti G_*}$, and 
\EQ{
 \psi:=P_{\ga+}v^0(0), \pq v^1(0):=\psi+\ti G_*(\psi).}
Then we have
\EQ{
 \diff v_{\ga+}^\pa(0)=0, \pq \|\diff v_0^\pa(0)\|_E^{2}\lec\|\diff v^\pa(0)\|_E^2\simeq\|\diff v_-^\pa(0)\|_E^2\lec\de^2.}
Hence we can repeatedly apply the above lemma to deduce that 
\EQ{
 \|\diff v_-^\pa(t)\|_E \ge \frac 12e^{-\uk t/2}\|\diff v_-^\pa(0)\|_E}
for all $t<0$. In particular,
\EQ{
 \|v_-^0(t)\|_E \ge \|\diff v_-^\pa(t)\|_E-\|v_-^1(t)\|_E \gg \de}
for sufficiently large $-t$. 

In short, any solution starting from $\HH_\perp\cap B_\de(\HH)\setminus\gr{\ti G_*}$ 
moves out of the neighborhood $B_\de(\HH)$ for large $-t$. Of course, this is meaningful 
for the original equation only until the (backward) exiting time, but it implies that any trapped 
solution in $\HH_\perp$ within distance $\de$ must be on the manifold $\gr{\ti G_*}$ for large $-t$. 

Combining this with the result in the previous section, we conclude that the local center-unstable manifold $\M_{cu,0}$ is characterized as the collection of solutions with $0$-momentum which stay close to $\Sm_0(Q)$ for all $-t\ge 0$. By symmetry, $\M_{cs,0}$ is the collection of solutions with $0$-momentum which stay close to $\Sm_0(Q)$ for all $t\ge 0$. 

Let $\ti\M_{cu,0}$ be the maximal forward evolution of $\M_{cu,0}$, and let $\ti \M_{cs,0}$ be the maximal backward evolution of $\M_{cs,0}$. Then $\ti\M_{cs,0}$ is the collection of solutions that stay close to $\Sm_0(Q)$ for large $t$, namely the initial data set for which the solution will be trapped by $\Sm_0(Q)$. We have the same characterization for $\ti\M_{cu,0}$ for $t\to-\I$. By the Lorentz transform, we can extend them to solutions with nonzero momentum which are trapped by $\Sm(Q)$ with the same momentum. 

\section{Regularity of the center-stable manifold}
The above construction implies only Lipschitz continuity of the manifold. For the differential structure of $\gr{G_*}$, we also have to take account of the spatial translation. In the following, we assume that $f$ satisfies \eqref{asm f'} and $\al:=\max(1,p-2)$.  

\begin{defn} \label{def mob diff}
Let $Y$ be a Banach space. We say that a function $G:\HH\to Y$ is {\it mobile-differentiable} at $\fy\in \HH$, if there is a bounded linear $M:\HH\times\R^d\to Y$ such that 
\EQ{ \label{mob diff}
 \lim_{\e\to 0}\|G(\fy_{(\e)})-[G(\fy^0) + \e M(\psi,q)]\|/\e=0,}
where $\fy_{(\e)}:=(\fy+\e\psi)(x+\e q)$, for any $(\psi,q)\in \HH\times\R^d$. It is obvious that $M$ is unique. We call $\Dm G(\fy):=M$ the mobile derivative of $G$ at $\fy$. 
\end{defn}
Let $G'(\fy)$ be the usual derivative in the Frech\'et sense. Then we have 
\EQ{
 \Dm G(\fy)(\psi,q)=G'(\fy)(\psi+\na\fy\cdot q),}
provided that $G$ is differentiable in the $\D\HH$ topology, but in general, it makes sense only in the subspace $q=0$. Hence the mobile-differentiability is stronger than the differentiability in $\HH$, and weaker than that in $\D\HH$. 

If $G\in\G_{\ell,\de}$ and mobile-differentiable, then 
\EQ{
 \|G(\fy_{(\e)})-G(\fy^0)\|_\HH \lec \ell\mb(\fy_{(\e)},\fy^0)
 \lec \ell\e[\|\psi\|_\HH+|q|\phi_\de(\|\fy\|_\HH)],}
which implies  
\EQ{ \label{mobdif Lip bd}
 \|\Dm G(\fy)(\psi,q)\|_\HH \lec \ell[\|\psi\|_\HH+|q|\phi_\de(\|\fy\|_\HH)].}
Moreover, we have 
\EQ{
  G((\fy+\e\psi+o(\e))(x+\e q+o(\e))
 \pt=G(\fy_{(\e)})+o(\e) 
 \pr= G(\fy) + \e\Dm G(\fy)(\psi,q) + o(\e).}

We are going to prove that $G_*:\HH\to P_-\HH$ is ``mobile-$C^{1,\al}$", by showing the flow-invariance of the following set of such graphs. 

\begin{defn} \label{def mob C1al}
For each $\de,\ell,\La>0$, and $\al\in(0,1]$, 
we define $\G_{\ell,\de}^{\al,\La}$ as the set of all $G\in\G_{\ell,\de}$ that are mobile differentiable at every $\fy\in\HH$, satisfying
\EQ{ \label{mob C11}
 \pt \|\Dm G(\fy^0)(\psi,q)-\Dm G(\fy^1)(\ta_b^*\psi,q)\|_E
 \pr \le \La\left[\|\fy^0-\ta_b\fy^1\|_E + |b|\phi_\de(\|\fy^1\|_E)\right]^{\al,1}\left[\|\psi\|_E + |q|\phi_\de(\|\fy^1\|_E)\right],}
for all $\fy^0,\fy^1,\psi\in\HH$ and $q,b\in\R^d$, where 
\EQ{ \label{def double power}
 x^{\al,1}:=|x|^\al+|x|.}
\end{defn}

We will prove that $\G_{\ell,\de}^{\al,\La}$ is invariant by the flow, provided that $\de,\ell$ are small and $\La$ is large. First we investigate the backward evolution of the mobile derivative. Assuming the smallness of $\ell,\de>0$ as in \eqref{cond el-de} and \eqref{cond2 el-de}, for any $G\in\G_{\ell,\de}$ and $t>0$, define $G_t:\HH\to P_-\HH$ and $\widehat G_t:\HH\to\HH$ by 
\EQ{ \label{def hatG}
 G_t:=\U(t) G, \pq  \widehat G_t(\fy) := \fy_{\ge 0} + G_t(\fy).}
Let $\psi\in\HH$, $q,b\in\R^d$, and $t_0\in[0,1]$. For small $\e\in\R$, let  
\EQ{
 v_{(\e)}(t):=U(t-t_0)\widehat G_{t_0}(\fy_{(\e)}), 
 \pq w_{(\e)}(t,x)=v_{(\e)}(t,x-c_{(\e)}),} 
where $(w,c)=(w_{(\e)},c_{(\e)})$ is the solution of \eqref{eq wc} with the initial data 
\EQ{
 w_{(\e)}(t_0)=(\fy+\e\psi)(x-b), \pq c_{(\e)}(t_0)=b+\e q.}
Since the nonlinear term $(F,B)(w,c)$ in \eqref{eq wc} is $C^1$ from $\Str\times L^\I_t$ to $L^1_t\HH\times L^\I_t$, with a small factor on a short time interval $(0,T)$, it is straightforward by the iteration argument that $(w,c)$ is differentiable in $\Str\times L^\I_t$ at $\e=0$, with the derivative 
\EQ{ \label{def zg}
 \pt (z,g):=\lim_{\e\to 0}\frac{(w_{(\e)},c_{(\e)})-(w_{(0)},c_{(0)})}{\e},
 \pr \|z\|_{\Str(0,1)}+\|g\|_{L^\I(0,1)} \lec \|z(t_0)\|_\HH + |g(t_0)|.}
Let $\y:=z(t,x+c_{(0)}(t))$ and $\widehat F=F-(0,f'(Q_{c_{(0)}})w_1)$. Then 
\EQ{ \label{eq eta}
 \pt \dot \y = J\LL \y + (0,g\cdot\na f'(Q) v_1) + B(w,c)\cdot\na\y + \ta_{c}^* \p_{(w,c)}\widehat F(w,c)\cdot(z,g),
 \pr \y(t_0)=P_{\ge 0}(\psi+q\cdot\na\fy)-q\cdot\na\widehat G_{t_0}(\fy)+\Dm G_{t_0}(\fy)(\psi,q), \pq g(t_0)=q,}
where the subscript $(0)$ is omitted. 
Mobile-differentiating the identities
\EQ{
 \pt P_- v_{(\e)}(t_0) = G_{t_0}(\fy_{(\e)}), 
 \pq P_- v_{(\e)}(t) = G_t(v_{(\e)}(t_0))}
yields
\EQ{
 \pt P_-[ \y(t_0) + q \cdot \na \widehat G_{t_0}(\fy) ] = \Dm G_{t_0}(\fy)(\psi,q),
 \pr P_-[ \y(t) + g(t)\cdot \na v(t) ] = \Dm G_t(v(t))(\y(t),g(t)).} 
Since $G(\fy)=G(P_{\ge 0}\fy)$ and 
\EQ{
 P_{\ge 0}\fy_{(\e)} = P_{\ge 0}[(\fy_{\ge 0}+\e\psi_{\ge 0})(x+\e q)+\e q\cdot\na \fy_-] + o(\e),}
we have 
\EQ{ \label{mobdif proj}
 \Dm G(\fy)(\psi,q) = \Dm G(\fy_{\ge 0})(\psi_{\ge 0}+q\cdot P_{\ge 0}\na\fy_-,q).}

\begin{lem} 
Let $\ell,\de,\La>0$ satisfy \eqref{cond el-de}, \eqref{cond2 el-de} and 
\EQ{
 \La \gg \ell/\de.}
Then for any $G\in\G_{\ell,\de}^{\al,\La}$ and any $t_0\in[1/2,1]$, $\U(t)G\in \G_{\ell,\de}^{\al,\La}$.
\end{lem}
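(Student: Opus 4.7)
The plan is to adapt the graph-contraction argument of Lemma \ref{lem:contract} at the level of mobile derivatives, using the linearized flow \eqref{def zg}--\eqref{mobdif proj} as the main tool. The mobile derivative $\Dm G_{t_0}(\fy)(\psi,q)$ is characterized by a coupled endpoint condition: at $t=t_0$ the identity $P_-[\y(t_0)+g(t_0)\cdot\na v(t_0)]=\Dm G_{t_0}(\fy)(\psi,q)$ prescribes the unstable data of the linearized solution, while at $t=0$ the hypothesis $G\in\G_{\ell,\de}^{\al,\La}$ supplies the matching identity $P_-[\y(0)+g(0)\cdot\na v(0)]=\Dm G(v(0))(\y(0),g(0))$ through the known derivative of $G$. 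Existence and linearity of $\Dm G_{t_0}$, together with the Lipschitz bound \eqref{mobdif Lip bd} that keeps $G_{t_0}\in\G_{\ell,\de}$, follow by solving this coupled pair with the same backward/forward balance already used for the graphs themselves.

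For the H\"older bound \eqref{mob C11} I would fix $\fy^0,\fy^1,\psi\in\HH$ and $b,q\in\R^d$ and run two linearized solutions $(\y^i,g^i)$ of \eqref{eq eta} on $[0,t_0]$: the first at base point $\fy^0$, the second at $\ta_b\fy^1$, each tested in direction $(\psi,q)$ (in the translated frame for the second). Projecting the difference of the two $t=0$ identities and invoking the hypothesis $G\in\G_{\ell,\de}^{\al,\La}$ produces a bound of the shape
\[
\La\,[\tm(v^0(0),\ta_{b(0)}v^1(0))]^{\al,1}\,\bigl[\|\y(0)\|_E+|g(0)|\phi_\de(\|v^1(0)\|_E)\bigr],
\]
where $b(t)$ is the modulation keeping the two nonlinear solutions aligned, precisely as in the case analysis in the proof of Lemma \ref{lem:diff v}. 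Lemma \ref{lem:diff v} applied to $(v^0,v^1)$ controls the base-point factor backward in time, while an analogous linearized estimate, obtained by differentiating that proof once more in the direction $(\y,g)$, controls the direction factor backward.

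The $t=0$ bound must then be transported forward to $t=t_0$ through the hyperbolic balance of Lemma \ref{lem:contract}. The unstable mode $P_-$ contracts backward like $e^{-\uk t_0}$, so forward one pays $e^{+\uk t_0}$, but this is paired with $\tm v^\pa(t_0)\simeq e^{-(\uk-\ka)t_0}\tm v^\pa(0)$, yielding a net factor $e^{-(\uk-\ka)t_0}(1+C\sqrt\de)<1$ on the $\La[\,\cdot\,]^{\al,1}[\,\cdot\,]$ contribution, exactly the gain exploited in \eqref{contraction}. The remaining seed contributions, coming from the modulation correction $g\cdot\na v$ and the $\ta_c^*\p_{(w,c)}\widehat F$ terms in \eqref{eq eta}, carry a prefactor of order $\ell/\de$ and are absorbed into $\La$ by the assumption $\La\gg\ell/\de$. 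The result is the same H\"older bound with the same constant $\La$, so $\U(t_0)G\in\G_{\ell,\de}^{\al,\La}$.

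The main obstacle I expect is propagating the precise $[\,\cdot\,]^{\al,1}$ structure through the nonlinear and linearized flows. For $\al<1$, the H\"older power applies in the near regime and the Lipschitz power in the far regime, so one must verify, paralleling the four-case analysis in the proof of Lemma \ref{lem:diff v}, that the seed contributions respect this dichotomy uniformly across the three regimes determined by $\min_j\|v^j(0)\|_\HH$ vs. $C_0\de,C_1\de$. The critical ingredient is the second-order assumption \eqref{asm f'} on $f''$, which yields a H\"older modulus of exponent exactly $\al=\min(1,p-2)$ for $\diff\Dm\widehat F^\pa$ and so matches the $[\,\cdot\,]^{\al,1}$ form. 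A secondary subtlety is keeping the weight $\phi_\de(\|\fy^1\|_E)$ rather than $\phi_\de(\|\fy^0\|_E)$ in the conclusion, which is maintained by resolving the asymmetry in the mobile distance in favor of the reference point $\fy^1$ throughout.
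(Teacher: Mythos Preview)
Your overall strategy matches the paper's: run two linearized solutions $(\y^j,g^j)$ of \eqref{eq eta} over $[0,t_0]$ at base points $\fy^0,\fy^1$, invoke the hypothesis $G\in\G_{\ell,\de}^{\al,\La}$ at $t=0$, and relate back to $t=t_0$. But your account of the contraction step contains a real error.

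The assertion $\tm v^\pa(t_0)\simeq e^{-(\uk-\ka)t_0}\tm v^\pa(0)$ is false here. You are importing the mechanism of Lemma \ref{lem:contract}, where the two trajectories coincide in $P_{\ge 0}$ at the final time and hence $\tm v^\pa\simeq\|\diff v_-^\pa\|$ throughout. In the present lemma the base points $\fy^0,\fy^1$ differ in $P_{\ge 0}$, and since both lie on $\gr{G_t}$ with $G_t$ $\ell$-Lipschitz, the $P_{\ge 0}$ difference dominates and changes by at most a factor $(1+C\ka+C\sqrt\de)$ over $[0,t_0]$; there is no exponential shrinkage of the base-point distance. The only exponential gain is the direct factor $e^{-\uk t_0}$ obtained by applying $e^{J\LL t_0}$ to the $P_-$-valued leading term: the decomposition \eqref{C11 decompose} reads
\[
\Dm G_{t_0}(\fy^0)(\psi,q)-\Dm G_{t_0}(\fy^1)(\ta_b^*\psi,q)
= e^{J\LL t_0}\,\diff\Dm G(v^\pa(0))(\y^\pa(0),g^\pa(0))+\text{(negligible)},
\]
and $\|e^{J\LL t_0}|_{P_-\HH}\|\le e^{-\uk t_0}$. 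What must then be shown is that the bracket arguments at $t=0$ in \eqref{mob C11}---namely $\|v_{\ge 0}^0(0)-\ta_{\diff c^\pa}v_{\ge 0}^1(0)\|_E$, $|\diff c^\pa(0)|$, $\|\y^0_{\ge 0}(0)\|_E$, $|g^0(0)|$---exceed the corresponding data at $t_0$ by at most $(1+C\ka+C\sqrt\de+C\ell)$. This requires \emph{sharp} constants, obtained for the continuous-spectrum parts via the energy identities for $\LR{\LL\x|P_\ga\x}$ and $\LR{\LL\y^0|P_\ga\y^0}$ rather than rough Strichartz bounds. The net factor on $\La$ is $e^{-\uk t_0}(1+C\ka+C\sqrt\de+C\ell)^2\le 1/2$, leaving room to absorb the negligible terms.

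You also miss the opening reduction that makes the case analysis short: by \eqref{mobdif Lip bd} the bound \eqref{mob C11} is automatic once $\|\fy^0-\ta_b\fy^1\|_E+|b|\phi_\de(\|\fy^1\|_E)\gec\ell/\La$, so (using $\La\gg\ell/\de$) one may assume this quantity is $\ll\de$. This forces either $\|\fy^0\|_\HH\simeq\|\fy^1\|_\HH\gg\de$ (purely linear regime, $g^j\equiv q$) or $\|\fy^\pa\|_\HH\lec\de$, eliminating the mixed cases you anticipated from Lemma \ref{lem:diff v} and disposing of the $\phi_\de$-weight asymmetry for free.
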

\begin{proof}
First of all, \eqref{mobdif Lip bd} is enough to have \eqref{mob C11} in the case where 
\EQ{
 \ell/\La \ll \|\fy^0-\ta_b\fy^1\|_E + |b|\phi_\de(\fy^1).}
Hence we may assume 
\EQ{
 \mb\fy^\pa \le \|\fy^0-\ta_b\fy^1\|_E + |b|\phi_\de(\fy^1) \lec \ell/\La \ll \de.}
Therefore we have either $\|\fy^0\|_\HH\simeq\|\fy^1\|\gg\de$ or $\|\fy^0\|_\HH+\|\fy^1\|_\HH\lec\de$. 

Next we investigate evolution of the difference of mobile-derivatives. For any $\fy^0,\fy^1,\psi\in \HH$ and $q,b\in\R^d$, let 
\EQ{
 \pt v^0_{(\e)}(t_0):=\widehat G_{t_0}(\ta^*_{\e q}(\fy^0+\e\psi)), \pq v^1_{(\e)}(t_0):=\widehat G_{t_0}(\ta^*_{\e q}(\fy^1+\e\ta_b^*\psi)),
 \pr v^j_{(\e)}(t):=U(t-t_0)v^j_{(\e)}(t_0), \pq 
 \pq w^j_{(\e)}(t):=\ta_{c_{(\e)}^j(t)}v^j_{(\e)}(t),}
for $j=0,1$, where $c_{(\e)}^j$ is the solution of 
\EQ{
 \dot c_{(\e)}^j = B(w_{(\e)}^j,c_{(\e)}^j), \pq c_{(\e)}^0(t_0)=\e q, \pq c_{(\e)}^1(t_0)=b+\e q.}
Let $(z^j,g^j)$ be the derivative at $\e=0$ of $(w_{(\e)}^j,c_{(\e)}^j)$: 
\EQ{
 z^j=\lim_{\e\to 0}\frac{w_{(\e)}^j-w_{(0)}^j}{\e}, \pq g^j=\lim_{\e\to 0}\frac{c_{(\e)}^j-c_{(0)}^j}{\e}.}
Henceforth the subscript $(0)$ will be omitted. The initial values are 
\EQ{ 
 \pt w^0(t_0)=\widehat G_{t_0}(\fy^0), \pq c^0(t_0)=0,
 \pq w^1(t_0)=\ta_b\widehat G_{t_0}(\fy^1), \pq c^1(t_0)=b,
 \pr z^0(t_0) = P_{\ge 0}\psi + q\cdot\check G_{t_0}(\fy^0)+\Dm G_{t_0}(\fy^0)(\psi,q), \pq g^0(t_0)=q,
 \pr z^1(t_0) = P_{\ge 0}^b\psi + \ta_b[q\cdot\check G_{t_0}(\fy^1)+\Dm G_{t_0}(\fy^1)(\ta_b^*\psi,q)], \pq g^1(t_0)=q,}
where $P^b_*:=\ta_bP_*\ta_b^*$, and $\check G_t:\HH\to\HH^d$ is defined by 
\EQ{
 \check G_t(\fy)=\na P_-\fy - P_-\na\fy - \na G_t(\fy).}
Let $\y^j(t,x)=z^j(t,x+c^j(t))$, then we have 
\EQ{
 \pt P_-[ \y^0(t_0) + q \cdot \na \widehat G_{t_0}(\fy^0) ] = \Dm G_{t_0}(\fy^0)(\psi,q),
 \pr P_-[ \y^1(t_0) + q \cdot \na \widehat G_{t_0}(\fy^1) ] = \Dm G_{t_0}(\fy^1)(\ta_b^*\psi,q),
 \pr P_-[ \y^j(t) + g^j(t) \cdot \na v^j(t) ] = \Dm G_t(v^j(t))(\y^j(t),g^j(t)).} 
Thus we obtain 
\EQ{ \label{C11 decompose}
 \pt \Dm G_{t_0}(\fy^0)(\psi,q) - \Dm G_{t_0}(\fy^1)(\ta_b^*\psi,q)
 = \diff[\y_-^\pa(t_0) + q\cdot P_-\na\widehat G_{t_0}(\fy^\pa)]
 \pr= e^{J\LL t_0}\diff \Dm G_0(v^\pa(0))(\y^\pa(0),g^\pa(0)) - e^{J\LL t_0} \diff  g^\pa(0)\cdot P_- \na v^\pa(0) 
 \prq + \diff[\y_-^\pa(t_0)-e^{J\LL t_0}\y_-^\pa(0)] + q\cdot \diff P_- \na\widehat G_{t_0}(\fy^\pa).}
The first term on the right of \eqref{C11 decompose} can be rewritten by using \eqref{mobdif proj}
\EQ{
 \pn\diff \Dm G_0(v^\pa)(\y^\pa,g^\pa)
 \pt=  
  \diff \Dm G_0(v_{\ge 0}^0)(\y_{\ge 0}^0 + \Rm v^0,g^0)
 \prq- \diff \Dm G_0(v_{\ge 0}^1)(\y_{\ge 0}^1 + \Rm v^1,g^0) + \Dm G_0(v^1)(\y^1,\diff g^\pa),}
where all functions are evaluated at $t=0$, and the operator $\Rm$ is defined by
\EQ{ \label{def R}
 \Rm:=g^0\cdot P_{\ge 0} \na P_-.}

We say that a component in \eqref{C11 decompose} is negligible if its norm in $E$ is much smaller than the right-hand side of \eqref{mob C11}. So is the last term in \eqref{C11 decompose}, since $\La\gg 1$ and 
\EQ{
 \|\diff P_- \na\widehat G_{t_0}(\fy^\pa)\|_E \lec \|D^{-1}\diff{\widehat G_{t_0}}(\fy^\pa)\|_\HH
 \lec \tm\fy^\pa \lec \|\fy^0-\ta_b\fy^1\|_E. }

In order to estimate the other terms, we prepare rough bounds on the unknowns. Lemma \ref{lem:bd v} together with $G\in\G_{\ell,\de}$ implies that 
\EQ{ \label{bd v^j}
 \|v^j\|_{\Str(0,1)} \lec \|\fy^j\|_E, \pq 
 \|v_{\ge 0}^j(0)\|_E \pt\le (1+C\ka+C\de)\|v_{\ge 0}(t_0)\|_E 
 \pr\le (1+C\ka+C\de+C\ell)\|\fy_{\ge 0}^j\|_E.}
The estimates in \eqref{est diff BF} together with $G\in\G_{\ell,\de}$ imply 
\EQ{ \label{bd diff wc}
 \pt\|\diff w^\pa\|_{\Str(0,1)}+\|\diff c^\pa\|_{L^\I(0,1)} 
 \pn\lec \|\diff w^\pa(t_0)\|_\HH + |\diff c^\pa(t_0)|
 \pr\lec \|\fy^0_{\ge 0}-\ta_b\fy^1_{\ge 0}\|_E + |b|\ell\|\fy^1\|_E + \ell( \tm\fy^\pa_{\ge 0} + |b| \phi_\de(\|\fy^1\|_E) )+ |b|
 \pr\lec \|\fy^0-\ta_b\fy^1\|_E + |b| \phi_\de(\|\fy^1\|_E).}
The equation for each $(z^j,g^j)$ is given as follows. 
\EQ{
 \pt \dot g=B', \pq \dot z = F', 
 \pr \chi_\de':=2\de^{-2}\chi'(\|w\|_\HH^2/\de^2)\LR{w,z}_\HH,
 \pq I:=(H(Q)-\LR{\na^2Q_c|w_1})^{-1},
 \pr B':=\chi_\de'I\frac{\om(w,\na w)}{2}+ I(\LR{\na Q_c|z_1}-g\LR{\na^2Q_c|w_1})B+\chi_\de I \om(z,\na w),
 \pr F':= \mat{B'\cdot\na Q_c + g\cdot \na^2 Q_c\cdot B \\ f'(Q_c)z_1 - g\cdot\na f'(Q_c)w_1 + N'},
 \pr N':=\chi_\de'N_c+\chi_\de[(f'(Q_c+w_1)-f'(Q_c))(z_1-g\cdot\na Q_c)+g\cdot\na f'(Q_c)w_1], }
where the superscript $j$ and the dependence on $(w,c)$ are omitted. Using the estimate $\|w\|_{\Str(-1,1)} \lec \|w(t_0)\|_\HH$, we obtain in the same way as for \eqref{est diff BF}, 
\EQ{
 \pt |\chi_\de'| \lec \|z\|_\HH/\de, \pq |B|\lec\de^2, 
 \pq |B'| \lec \de\|z\|_\HH + |g|\bde{\|w(0)\|_\HH}^3,
 \pr \|F'\|_{L^1\HH(-1,1)} \lec[T+\bde{\|w(t_0)\|_\HH}][\|z\|_\Str+\|g\|_{L^\I}]+T\|w(t_0)\|_\HH\|g\|_{L^\I}.}
In the case $\|\fy^0\|_\HH\simeq\|\fy^1\|_\HH\gg\de$, we have $g^j\equiv q$, and so by the Strichartz estimate, 
\EQ{
 \|z^j\|_{\Str} \lec \|z^j(t_0)\|_\HH + |q|\|w^j(t_0)\|_\HH.}
In the other case $\|\fy^\pa\|_\HH\lec\de$, 
\EQ{
 \|z^j\|_\Str+\|g^j\|_{L^\I} \lec \|z^j(t_0)\|_\HH + |g^j(t_0)|.}
In both cases, \eqref{mobdif Lip bd} implies 
\EQ{ \label{bd zg}
 \pt\|z^j\|_{\Str(0,1)}+\|g^j\|_{L^\I(0,1)} 
  \pr\lec \|\psi\|_\HH + |q|\ell\|\fy^j\|_\HH + \ell[\|\psi\|_\HH+|q|\phi_\de(\|\fy^j\|_E)]+|q| 
  \pr\lec \|\psi\|_E + |q| \phi_\de(\|\fy^j\|_E).}

For the difference estimate, we consider the two cases separately. If $\|\fy^0\|_\HH\simeq\|\fy^1\|_\HH\gg\de$, then $g^j\equiv q$ and 
\EQ{ \label{eq diff z lin}
 \diff{\dot z}=J\D\diff z + (0,\diff f'(Q_{c^\pa})z_1^\pa - q\cdot\na f'(Q)\diff w_1^\pa - q\cdot\diff{\na f'(Q_{c^\pa})}w_1^1),}
and so
\EQ{
 \|\diff z^\pa\|_{\Str(0,1)} \lec \|\diff z^\pa(t_0)\| + |b|\|z^\pa\|_\Str + |q|\|\diff w^\pa\|_\Str + |b|^\al|q|\|w^\pa\|_\Str,}
where the last term comes from the last one of \eqref{eq diff z lin}. Inserting \eqref{bd diff wc} and \eqref{bd zg}, we obtain 
\EQ{ \label{bd diff zg} 
 \pt\|\diff(z^\pa,g^\pa)\|_{\Str\times L^\I(0,1)}  
 \pr\lec [\|\fy^0-\ta_b\fy^1\|_E + |b|\phi_\de(\|\fy^1\|_E)]^{\al,1}[\|\psi\|+|q|\phi_\de(\|\fy^1\|_E)].}
In the nonlinear case $\|\fy^\pa\|_\HH\lec\de$, we have 
\EQ{
 \pt |\diff\chi_\de'| \lec \de^{-2}(\|\diff w^\pa\|_\HH \|z^\pa\|_\HH + \|w^\pa\|_\HH \|\diff z^\pa\|_\HH),
 \pr |\diff B'| \lec \|\diff(w^\pa,c^\pa)\|_{\HH\times\R^d}\|(z^\pa,g^\pa)\|_{\HH\times\R^d} + \|w^\pa\|_{\HH} \|\diff(z^\pa,g^\pa)\|_{\HH\times\R^d},
 \pr \|\diff F'\|_{L^1\HH} \lec T[\|\diff z^\pa\|_{\Str} + \de\|\diff c^\pa\|_{L^\I}^\al\|g\|_{L^\I}] 
 + \|w^\pa\|_\Str\|\diff(z^\pa,g^\pa)\|_{\Str\times L^\I}
 \prQQ\qquad+ \|\diff(w^\pa,c^\pa)\|_{\Str\times L^\I}\|(z^\pa,g^\pa)\|_{\Str\times L^\I},} 
where the term with $\al$ power comes from the same term as in the linear case, i.e., $(1-\chi_\de)q\cdot\diff\na f'(Q_{c^\pa})w_1^1$. Hence 
\EQ{
 \|\diff(z^\pa,g^\pa)\|_{\Str\times L^\I(0,1)}
 \lec \|\diff z^\pa(t_0)\|_\HH + [\|\diff w^\pa\|_\Str+ \|\diff c^\pa\|_{L^\I}^{\al,1}]\|(z^\pa,g^\pa)\|_{\Str\times L^\I},}
which, together with \eqref{bd diff wc} and \eqref{bd zg}, leads to the same bound \eqref{bd diff zg} as in the linear case.

For the penultimate term in \eqref{C11 decompose}, we obtain from the equation of $\y^j$ \eqref{eq eta} in the same way as above, 
\EQ{
 \pt\|\diff P_-[\y^\pa(t_0)-e^{J\LL t_0}\y^\pa(0)]\|_E
 \pr\lec \|\diff g^\pa\|_{L^\I_t}\|v^\pa\|_{\Str} 
   + \|g^\pa\|_{L^\I_t} \|\tm v^\pa\|_{L^\I_t}
 \prq+ \de\|\diff(z^\pa,g^\pa)\|_{\Str\times L^\I_t} + \|\diff(w^\pa,c^\pa)\|_{\Str\times L^\I_t} \|(z^\pa,g^\pa)\|_{\Str\times L^\I_t},}
where we do not get the term with $\al$-power, since the potential term is frozen in the $\y$ equation. Using \eqref{bd v^j}--\eqref{bd zg} and Lemma \ref{lem:diff v}, 
we can easily observe that the above is negligible because $\La\gg 1$. Here again we have used that either $\|\fy^\pa\|_\HH\lec\de$ or $g^j\equiv q$, 
which will be tacitly utilized in the following,  too. 
Hence the second term on the right of \eqref{C11 decompose} is also negligible by \eqref{bd v^j} and \eqref{bd diff zg}. 

For the remaining and leading term of \eqref{C11 decompose}, we have 
\EQ{ \label{C11 main est}
 \pt\|e^{J\LL t_0}\diff \Dm G_0(v^\pa)(\y^\pa,g^\pa)\|_E
 \pr\le 
   e^{-\uk t_0}\La[\|v_{\ge 0}^0-\ta_{\diff c^\pa} v_{\ge 0}^1\|_E+|\diff c^\pa|\phi_\de(\|v^1_{\ge 0}\|_E)]^{\al,1}
 \prQQ\times [\|\y_{\ge 0}^0 + \Rm v^0\|_E + |g^0|\phi_\de(\|v^1_{\ge 0}\|_E)]
 \prq + C\ell |\diff g^\pa|\phi_\de(\|v^1_{\ge 0}\|_E) + C\ell \|\ta_{\diff c^\pa}^*[\y_{\ge 0}^0 + \Rm v^0] - [\y_{\ge 0}^1 + \Rm v^1] \|_E,}
where $t=0$. The penultimate term in \eqref{C11 main est} is negligible thanks to $\ell\ll\La$, \eqref{bd v^j} and \eqref{bd diff zg}. The last term in \eqref{C11 main est} is dominated by 
\EQ{
 \pt \|\ta_{\diff c^\pa}^*\y^0-\y^1\|_\HH + \|[P_-,\ta_{\diff c^\pa}^*]\y^0\|_\HH + |\diff c^\pa|\|\Rm v^0\|_\HH + \|\diff \Rm v^\pa\|_\HH
 \pr\lec \|\diff z^\pa\|_\HH + |\diff c^\pa|\|z^0\|_\HH + |\diff c^\pa||g^0|\|v^0\|_\HH + |g^0|\tm v^\pa.}
Hence it is also negligible by using the estimates \eqref{bd v^j}--\eqref{bd diff zg} and $\ell\ll\La$. 

It remains to deal with the leading term of \eqref{C11 main est}, for which we need more precise estimates, employing the time decay of $e^{-\uk t_0}$. 
First we consider the linearized case $\|\fy^0\|_\HH\simeq\|\fy^1\|_\HH\gg\de$, using the equations 
\EQ{
 \pt v^j(0)=e^{-J\LL t_0}\widehat G_{t_0}(\fy^j), \pq c^0(t)=0, \pq c^1(t)=b,
 \pr \dot\y^j=J\LL \y^j + (0,q\cdot \na f'(Q) v^j_1), \pq g^j(t)=q.}
The first component on the right of \eqref{C11 main est} is estimated by 
\EQ{
 \pt\|v^0_{\ge 0}-\ta_{\diff c^\pa} v^1_{\ge 0}\|_E
 \pr\le \|e^{-J\LL t_0}P_{\ge 0}(\fy^0-\ta_b\fy^1)\|_E + \|e^{-J\LL t_0}[P_-,\ta_b]\fy^1\|_E + \|[e^{-J\LL t_0},\ta_b]\fy^1_{\ge 0}\|_E
 \pr\le e^{\ka}\|\fy^0-\ta_b\fy^1\|_E + C|b|\|\fy^1\|_E,}
and the third component by 
\EQ{
 \|\y_{\ge 0}^0 + \Rm  v^0\|_E 
 \le \|e^{-J\LL t_0}\y_{\ge 0}^0(t_0)\|_E + C|q|\|\fy^0\|_E 
 \le e^{\ka}\|\psi\|_E + C|q|\|\fy^0\|_E.}
Since $\uk t_0 \gg \ka$ and $\phi_\de(\|\fy^j\|_E)\gg\|\fy^j\|_E$, we see that the leading term of \eqref{C11 main est} is smaller than 
\EQ{
 \frac 12 \La \left[\|\fy^0-\ta_b\fy^1\|_E + |b|\phi_\de(\|\fy^1\|_E)\right]\left[\|\psi\|_E + |q|\phi_\de(\|\fy^1\|_E)\right].}
Hence the other terms, which have been shown to be negligible, are absorbed into the remaining half of \eqref{mob C11}. Thus the linearized case $\|\fy^\pa\|\gg\de$ is done. 

It remains to consider the nonlinear case $\|\fy^\pa\|\lec\de$. Let 
\EQ{
 \x:=v^0-\ta_{\diff c^\pa} v^1=\ta_{c^0}^*\diff w^\pa,} 
then we have 
\EQ{
 \pt v^0_{\ge 0}-\ta_{\diff c^\pa} v^1_{\ge 0} 
   = \x_{\ge 0} + [P_-,\ta_{\diff c^\pa}]v^1, 
 \pr \dot\x = J\LL\x + B(w^0,c^0)\cdot\na\x + \ta_{c^0}^*[(0,\diff f'(Q_{c^\pa})w^1_1) + \diff{\widehat F}(w^\pa,c^\pa)].}
The commutator term is negligible thanks to \eqref{bd v^j}, \eqref{bd diff wc} and $\|\fy^1\|_\HH\lec\de$. For the discrete spectral part of $\x_{\ge 0}$, we have 
\EQ{
 \pt \|P_{0+}[\x-e^{-J\LL t_0}\x(t_0)]\|_E
    \lec  \de \|\diff (w^\pa,c^\pa)\|_{\Str\times L^\I_t(0,1)},
 \pr \|e^{-J\LL t_0}\x_{0+}(t_0)\|_E \le e^{\ka}\|P_{0+}\x(t_0)\|_E,}
and for the continuous spectral part, 
\EQ{
 \p_t\LR{\LL\x|P_\ga\x}/2
  \pt= B(w^0,c^0)[\LR{f'(Q)\x_1|\na\x_1}-\LR{\LL\x|P_d\na\x}]
  \prq+ \LR{\LL\x|P_\ga\ta_{c^0}^*[(0,\diff f'(Q_{c^\pa})w^1_1) + \diff{\widehat F}(w^\pa,c^\pa)]}
 \pr\lec \de^2\|\x\|_E^2 + \|\x\|_E \de \|\diff(w^\pa,c^\pa)\|_{\Str\times L^\I_t(0,1)} .}
Thus we deduce 
\EQ{ \label{bd xi}
 \|\x_{\ge 0}(0)\|_E \pt\le e^{\ka}\|\x_{\ge 0}(t_0)\|_E + C\sqrt\de\|\diff(w^\pa,c^\pa)\|_{\Str\times L^\I_t(0,1)} 
 \pr\le (1+C\ka+C\sqrt\de) [\|\fy^0-\ta_b\fy^1\|_E + |b|\phi_\de(\|\fy^1\|_E)].}
Similarly for the $\y^0$ component, \eqref{eq eta} implies 
\EQ{
 \pt \|\y^0(t_0)\|_E \le \|\psi\|_E + C|q|\|\fy^0\|_E + C\ell(\|\psi\|_E+|q|), 
 \pr \|P_{0+}[\y^0(0)-e^{-J\LL t_0}\y^0(t_0)]\|_E
  \lec \de \|(z^0,g^0)\|_{\Str\times L^\I_t},
 \pr \p_t\LR{\LL\y^0|P_\ga\y^0}/2
  = B(w^0,c^0)[\LR{\na f'(Q)\y^0_1|\na\y^0}-\LR{\LL\y^0|P_d\na\y^0}]
 \prQQ + \LR{\LL P_\ga\y^0|(0,g^0\cdot\na f'(Q)v^0_1)+\ta_{c^0}^*[\p_{(w,c)}\widehat F(w^0,c^0)\cdot(z^0,g^0)]},}
and so, using \eqref{bd zg} we obtain
\EQ{ \label{bd eta_>0}
 \|\y^0_{\ge 0}(0)\|_E \le (1+C\ka+C\sqrt\de+C\ell)\|\psi\|_E  + C|q|(\de+\ell).}
Also using \eqref{bd v^j}--\eqref{bd zg}, we have 
\EQ{ 
 \pt \|\Rm v^0\|_E \lec |g^0|\|v^0\|_E \lec \de(\|\psi\|_E+|q|),
 \pr \phi_\de(\|v^1_{\ge 0}\|_E) \le (1+C\ka+C\de+C\ell)\phi_\de(\|\fy^1\|_E),
 \pr |\diff c^\pa(0)| \le |b| + C\|\diff B(w^\pa,c^\pa)\|_{L^\I_t} \le |b| + C\de[\|\fy^0-\ta_b\fy^1\|_E+|b|],
 \pr |g^0| \le |q| + C\|B_w(w^0,c^0)z^0+B_c(w^0,c^0)g^0\|_{L^\I_t}
 \le |q| + C\de[\|\psi\|_E+|q|]}

Putting \eqref{bd xi}, \eqref{bd eta_>0} and the above estimates together, and using $\ka+\sqrt\de+\ell \ll \uk$, we see that the leading term in \eqref{C11 main est} is bounded by  
\EQ{
 \frac{1}{2}\La \left[\|\fy^0-\ta_b\fy^1\|_E + |b|\phi_\de(\|\fy^1\|_E)\right]^{\al,1}\left[\|\psi\|_E + |q|\phi_\de(\|\fy^1\|_E)\right],}
so the remaining half can absorb the negligible terms, concluding the proof in the nonlinear case $\|\fy^\pa\|_E\lec\de$. 
\end{proof}

Mobile-differentiability of the fixed point $G_*\in\G_{\ell,\de}$ now follows from the closedness of $\G_{\ell,\de}^\La$ for pointwise convergence. 
\begin{lem}
Under the assumption of the above lemma, let $G_n\in\G_{\ell,\de}^{\al,\La}$ be a sequence of maps such that $G_n(\fy)\to G(\fy)$ as $n\to\I$ for all $\fy\in\HH$. Then $G\in\G_{\ell,\de}^{\al,\La}$. 
\end{lem}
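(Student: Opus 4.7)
The goal is to show that pointwise convergence preserves each of the three ingredients that define $\G_{\ell,\de}^{\al,\La}$: qualitative membership in $\G_{\ell,\de}$, mobile-differentiability at every $\fy\in\HH$, and the H\"older-type bound \eqref{mob C11} on the mobile derivative.

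The first ingredient is immediate, since the conditions $G(0)=0$, $G=G\circ P_{\ge 0}$, and $\|\diff G(v^\pa)\|_E\le\ell\tm v^\pa$ all pass to pointwise limits, so $G\in\G_{\ell,\de}$. For the second, fix $\fy\in\HH$. By \eqref{mobdif Lip bd} the operators $\Dm G_n(\fy):\HH\times\R^d\to P_-\HH$ are uniformly bounded (in the natural weighted norm on the domain), and their range $P_-\HH$ is finite dimensional of dimension $K$. A diagonal extraction over a countable dense subset of $\HH\times\R^d$, combined with this uniform bound and linearity, produces a subsequence $n_k=n_k(\fy)$ along which $\Dm G_{n_k}(\fy)(\psi,q)\to L_\fy(\psi,q)$ for every $(\psi,q)$, with $L_\fy$ a bounded linear map inheriting the Lipschitz bound.

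The main step, and the main obstacle, is a uniform second-order Taylor-type remainder bound
\EQ{ \label{plan:taylor}
 \|G_n(\fy_{(\e)}) - G_n(\fy) - \e\,\Dm G_n(\fy)(\psi,q)\|_E \lec \La\, \e^{1+\al}\,[\|\psi\|_E + |q|\phi_\de(\|\fy\|_E)]^{\al,1}\,[\|\psi\|_E + |q|\phi_\de(\|\fy\|_E)]
}
valid for all $G_n\in\G_{\ell,\de}^{\al,\La}$ and small $\e>0$. The key is the exact path identity
\EQ{
 \fy_{((s+h)\e)} = (\fy_{(s\e)} + h\e\,\ta_{-s\e q}\psi)(x + h\e q),
}
which, together with mobile-differentiability of $G_n$ at $\fy_{(s\e)}$, shows that $F_n(s):=G_n(\fy_{(s\e)})$ is classically differentiable in $s\in[0,1]$ with derivative $F_n'(s)=\e\,\Dm G_n(\fy_{(s\e)})(\ta_{-s\e q}\psi,q)$; continuity of $F_n'$ follows from \eqref{mob C11}. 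The fundamental theorem of calculus gives $F_n(1)-F_n(0)=\int_0^1 F_n'(s)\,ds$, and applying \eqref{mob C11} with $\fy^0=\fy$, $\fy^1=\fy_{(s\e)}$, $b=s\e q$ and the identity $\fy-\ta_{s\e q}\fy_{(s\e)}=-s\e\psi$ bounds the integrand of $F_n'(s)-\e\,\Dm G_n(\fy)(\psi,q)$ pointwise by $\e\La(s\e)^{\al,1}$ times the weight factor; integration over $s$ yields \eqref{plan:taylor}. The delicate points are identifying the correct shift $\ta_{-s\e q}\psi$ in the path identity, ensuring that the H\"older structure $(\,\cdot\,)^{\al,1}$ couples the translation distance $|s\e q|$ to $\phi_\de$ as prescribed, and controlling $\phi_\de(\|\fy_{(s\e)}\|_E)$ in terms of $\phi_\de(\|\fy\|_E)$ uniformly for small $\e$.

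Passing to the subsequence $n_k$ in \eqref{plan:taylor}, the left-hand side converges pointwise by $G_{n_k}\to G$ at $\fy$ and $\fy_{(\e)}$ together with the definition of $L_\fy$, while the right-hand side is $n$-independent. Dividing by $\e$ and letting $\e\to 0$ shows that $G$ is mobile-differentiable at $\fy$ with $\Dm G(\fy)=L_\fy$. Uniqueness of the mobile derivative upgrades the subsequential convergence to full pointwise convergence $\Dm G_n(\fy)\to\Dm G(\fy)$ at every $\fy$, and passing to the limit in the inequality \eqref{mob C11} for each $G_n$ delivers the same inequality for $G$. Hence $G\in\G_{\ell,\de}^{\al,\La}$.
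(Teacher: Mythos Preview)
Your argument is correct and follows essentially the same strategy as the paper: both use the uniform H\"older bound \eqref{mob C11} to obtain a Taylor-type remainder estimate that is uniform in $n$, then pass to the pointwise limit to identify $\Dm G(\fy)$. The paper differs only cosmetically, extracting a single subsequence over a countable dense set of base points (and extending to all $\fy$ via the uniform modulus of continuity supplied by \eqref{mob C11}) and invoking a mean-value form of the remainder in place of your integral formula; your path identity $\fy_{((s+h)\e)} = (\fy_{(s\e)} + h\e\,\ta_{-s\e q}\psi)(x + h\e q)$ and the resulting expression for $F_n'(s)$ are in fact slightly more careful than the paper's one-line mean value theorem.
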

\begin{proof}
Since $G_n\in\G_{\ell,\de}$, \eqref{mobdif Lip bd} implies that $\Dm G_n(\fy)$ for each $\fy\in\HH$ is bounded in $(\HH\times\R^d)^*$. Hence after extracting a subsequence, we have weak convergence of $\Dm G_n(\fy)$ in $(\HH\times\R^d)^*$ for $\fy$ in a dense countable subset $A\subset \HH$. To extend the convergence to all $\fy\in\HH$, take a sequence $\fy_n$ converging to $\fy$ in $\HH$. Then for any $\psi\in\HH$ and $q\in\R^d$, we have
\EQ{
 \pt\|\Dm G_k(\fy_n)(\psi,q)-\Dm G_k(\fy_m)(\psi,q)\|_E
 \pr\le \La \|\fy_n-\fy_m\|^{\al,1}[\|\psi\|_E+|q|\phi_\de(\|\fy_n\|_E)]
 \to 0,}
as $n,m\to\I$, uniformly for all $k\in\N$. Hence we have the convergence
\EQ{
 \lim_{k\to\I}\Dm G_k(\fy)=\lim_{k\to\I}\lim_{n\to\I}\Dm G_k(\fy_n)
 = \lim_{n\to\I}\lim_{k\to\I}\Dm G_k(\fy_n)}
weakly in $(\HH\times\R^d)^*$. To see the mobile differentiability of $G$, we use the mean value theorem. For any $\fy,\psi\in\HH$, $q\in\R^d$, $k\in\N$ and $\e\in\R$ small, there is $\te\in[0,1]$ such that 
\EQ{
 G_k(\fy_{(\e)})-G_k(\fy)=\e \Dm G_k(\fy^{\te\e})(\psi,q).}
Since $G_k\in\G_{\ell,\de}^{\al,\La}$, we have 
\EQ{
 \pt\|\Dm G_k(\fy^{\te\e})(\psi,q)-\Dm G_k(\fy)(\psi,q)\|_\HH
 \pr\lec \La\|\fy^{\te\e}-\fy\|_\HH^{\al,1}[\|\psi\|_\HH+|q|\phi_\de(\|\fy\|_E)]
 \pn\to 0}
as $\e\to 0$, uniformly for all $k\in\N$. Hence the limit $G(\fy)$ is mobile differentiable and 
\EQ{
 \Dm G(\fy)(\psi,q) = \lim_{k\to\I} \Dm G_k(\fy)(\psi,q),}
which implies $G\in\G_{\ell,\de}^{\al,\La}$. 
\end{proof}

Therefore the fixed point $G_*$ belongs to $\G_{\ell,\de}^{\al,\La}$ and so in particular $C^{1,\al}$ in the $\HH$ topology. Then it is easy to see that $\ti G_*$ is also $C^{1,\al}$, so are $\M_{cu,0}$ and $\M_{cu}$.

\appendix 
\section{Table of Notation}
{\small
\begin{longtable}{l|l|l}
 \hline 
 $\bigcirc^\pa$, $\diff{\bigcirc^\pa}$ & ordered pair and difference & \eqref{def diff} \\
 $\bde{\bigcirc}$ & minimum & \eqref{def min} \\ 
 $\bigcirc^{\al,1}$ & sum of two powers & \eqref{def double power} \\
 $B_R(\bigcirc)$ & a ball in the Banach space & \eqref{def BR} \\
 $D,J,\D$ & basic operators & \eqref{def L+}, \eqref{def J}, \eqref{def D} \\ 
\hline 
 $f$, $d$, $p$ & nonlinearity, dimension and power & \eqref{asm f}, \eqref{asm f'} \\
 $N(v),\vec N(v),N_c(w)$, $C(v)$ & higher order nonlinearity around $Q$ & \eqref{eq0 v},\eqref{def N}, \eqref{def ANc}, \eqref{def C} \\ 
 $A(v),A_c(w)$ & transport terms & \eqref{eq1 v}, \eqref{def ANc} \\
 $\chi_\de(v)$ & localizer around $0$ in $\HH$ & \eqref{def chide} \\ 
 $M(v),M_\de(v)$ & nonlinearity for $v$ & \eqref{def M}, \eqref{eq v} \\
 $B(w,c),F(w,c)$ & nonlinearity for (c,w)& \eqref{def BF} \\
 $U(t)$, $\U(t)$ & localized flow in $\HH$ and on graphs & \eqref{def U}, Lemma \ref{lem:U on graph} \\ 
\hline
 $E(u),P(u)$ & energy and momentum & \eqref{def EP}, \eqref{def P} \\
 $\HH,\HH_0,\HH_\perp$  & energy space and its subsets & \eqref{def HH}, \eqref{def H0}, \eqref{def Hperp} \\
 $\|\cdot\|_E$, $\ka$ & energy norm and the parameter & \eqref{def E}, \eqref{def ka}\\ 
 $\LR{\cdot|\cdot}$, $\LR{\cdot,\cdot}_\HH$, $\om(\cdot,\cdot)$ & bilinear forms on $\HH$ & \eqref{def prdH}, \eqref{def prdL2}, \eqref{def om} \\ 
 $\|\cdot\|_\Str$ & Strichartz norm & \eqref{def Str} \\
\hline
 $Q,Q_c,Q(p,q),$ & the static solution, its transforms,  & \eqref{static eq}, \eqref{def Qc}, \eqref{def Qpq}\\ 
 $\vec Q,\vec Q(p,q),\Sm(Q),\Sm_0(Q)$ & vector forms, and the families & \eqref{def vQ}, \eqref{def sol mfd}, \eqref{def S0Q} \\ 
 $H(Q)=H_{\al\be}(Q)$ & kinetic energy matrix of $Q$ & \eqref{def H} \\
\hline
 $L_+,\LL$ & linearized operators at $Q$ & \eqref{def L+},\eqref{def LL} \\
 $k,K,\uk,\ok$ & eigenvalues and their bounds & \eqref{spec L+}, \\
 $\rh_k,g_{k\pm}$ & eigenfunctions of $L_+,\LL$ & \eqref{def rk},\eqref{def gk} \\
 $\la_{k\pm},\mu,\nu,\ga,v_\pm,v_d\etc$ & spectral components of $v$ & \eqref{spec decp}, \eqref{spec combi} \\
 $P_{\pm},P_0,P_d,P_{\mu},P_\ga\etc$ & the corresponding operators & \eqref{spec decp}, \eqref{spec combi} \\
\hline
 $\mb$, $\tm$, $\phi_\de$ & mobile distances and the cut-off & \eqref{def mobile}, \eqref{def tm}, \eqref{def phi} \\ 
 $\ta_c,\ta^j$ & translation operators & \eqref{def ta}, \eqref{def zej} \\
\hline
 $\G_{\ell,\de},\G$, $\G_{\ell,\de}^{\al,\La}$ & sets of Lipschitz maps & \eqref{def Gelde}, \eqref{def Gnorm}, Definition \ref{def mob C1al} \\
 $\gr{G}$ & the graph in $\HH$ & \eqref{def graph} \\ 
 $G_*$ & the invariant map & Theorem \ref{mfd loc eq} \\
 $\ti G$ & projection to $\HH_\perp$ & Lemma \ref{lem:tiG} \\
\hline
 $\Dm G$ & mobile derivative & Definition \ref{def mob diff} \\
 $\fy_{(\e)}$ & translating variation in $\HH$ & Definition \ref{def mob diff} \\
 $\widehat G_t$ & complemented graphs & \eqref{def hatG} \\ 
 $\Rm$ & spectral error of the $g$-translation & \eqref{def R} \\
 
 \hline 
\end{longtable}}

\end{document}